\documentclass[reqno,11pt]{amsart}

\usepackage{stmaryrd}
\usepackage[top=2.0cm,bottom=2.0cm,left=2cm,right=2cm]{geometry}
\usepackage{amsthm,amsmath,amssymb,dsfont}
\usepackage{mathrsfs,amsfonts,functan,extarrows,mathtools}
\usepackage[colorlinks]{hyperref}

\usepackage{marginnote}
\usepackage{xcolor}
\usepackage{stmaryrd}
\usepackage{esint}
\usepackage{graphicx}
\usepackage{bm}
\usepackage{soul}

\newtheorem{theorem}{Theorem}[section]
\newtheorem{proposition}{Proposition}[section]

\newtheorem{corollary}{Corollary}[section]
\newtheorem{remark}{Remark}[section]
\newtheorem{lemma}{Lemma}[section]

\numberwithin{equation}{section}
\allowdisplaybreaks

\def\R{\mathbb{R}}
\def\g{\gamma}

\def\P{\mathbf{P}}
\def\I{\mathbf{I}}
\def\v{\varepsilon}
\def\div{\mathrm{div}}

\def\l{\langle}
\def\r{\rangle}

\def\up{\textup}
\def\n{\nabla}
\def\p{\partial}

\def\L{\mathcal{L}}

\def\le{\lesssim}
\def\a{\alpha}
\def\b{\beta}

\def\le{\lesssim}
\def\D{\mathcal{D}}
\def\c{\cdot}
\def\g{\gamma}

\def\E{\mathcal{E}}
\def\M{\sqrt{M}}

\newcounter{wronumber}

\begin{document}
\title[The small Deborah number limit for the compressible fluid-particle flows]
{The small Deborah number limit for the compressible fluid-particle flows}

\author[Zhendong Fang]{Zhendong Fang}
\address[Zhendong Fang]
        {\newline School of Mathematics and Information Science, Guangzhou University, Guangzhou 510006, P. R. China}
\email{zdfang@gzhu.edu.cn}

\author[Kunlun Qi]{Kunlun Qi}
\address[Kunlun Qi]
        {\newline Department of Computational Mathematics, Science and Engineering and Department of Mathematics, Michigan State University, East Lansing, MI 48824, USA}
\email{qikunlun@msu.edu; kunlunqi.math@gmail.com}

\author[Huanyao Wen]{Huanyao Wen}
\address[Huanyao Wen]
        {\newline School of Mathematics, South China University of Technology, Guangzhou, 510641, P. R. China}
\email{mahywen@scut.edu.cn}

\keywords{Compressible Navier-Stokes equation, Vlasov-Fokker-Plank equation, Navier-Stokes-Smoluchowski equation, Hydrodynamic limit, Pointwise convergence, Hilbert Expansion.}

\subjclass[2020]{Primary 35Q99; 35B25; 35Q30; 35B40. Second: 82C40; 76N10.}

\begin{abstract}
In this paper, we consider the hydrodynamic limit for the fluid-particle flows governed by the Vlasov-Fokker-Planck equation coupled with the compressible Navier-Stokes equation as the Deborah number tends to zero. 
The proof is based on a formal derivation via the Hilbert expansion around the limiting system, the rigorous justification of which is completed by the refined energy estimates involving the macro-micro decomposition. 
Compared with the existing results obtained by the relative entropy argument ([A.~Mellet and A.~F. Vasseur, Comm. Math. Phys., 281 (2008), pp.~573--596]), the present work extends to a pointwise convergence of the hydrodynamic limits with an explicit rate for the fluid-particle coupled model.
\end{abstract}

\maketitle

\section{Introduction}

\subsection{The model}
\label{subsec:model}

Many natural phenomena are described by the fluid-particle two-phase flow models, including the droplets of spray, diesel engines, sedimentation analysis, biotechnology, medicine, and mineral processes \cite{CBLBPEJSM05, SBRBKT03, WFA85}, where one phase is considered as a suspension of particles within the other phase thought as a fluid. One of the models that has been widely considered is the Vlasov-Fokker-Planck equation coupled with the compressible Navier-Stokes equations (VFP-CNS):
\begin{equation}\label{VFP-CNS-O}
\left\{
\begin{aligned}
&\p_t f+v\c\n_x f = \frac{9\pi}{2R^2\rho_P}\div_v\big[\frac{k\mathcal{T}}{\mathcal{M}}\n_v f+(v-u)f\big],\\[3pt]
&\rho_F\big[\p_t(\rho u) + \div_x(\rho u\otimes u) + \n_x \tilde{P}(\rho)\big] + \tilde{L}u = 6\pi \tilde{\mu} R\int_{\R^3}(v-u)f \,dv,\\[3pt]
&\p_t \rho+\div_x(\rho u) = 0,
\end{aligned}
\right.
\end{equation}
where $f(t, x, v)$ is the density distribution function of the particles, $\rho(t,x)$, $u(t,x)$ denote the density and velocity of the fluid at time $t\geq 0$, position $x=(x_1,x_2,x_3)\in\R^3$, velocity $v=(v_1,v_2,v_3)\in\R^3$ respectively. $\tilde{P}(\rho)=\tilde{A} \rho^\g$ denotes the pressure function with $\tilde{A}>0,\,\g>1$, $\tilde{L}u= - \tilde{\mu} \Delta_x u-(\tilde{\mu} + \tilde{\lambda}) \n_x \up{div}_x u$ is the so-called the Lam$\acute{\text{e}}$ operator with constants $\tilde{\mu} + \tilde{\lambda} > 0$ and $\tilde{\mu} > 0$ being the dynamic viscosity of the fluid. $R$, $\mathcal{M}$ and $\rho_P$ are the radius, mass and associated mass density of one single spherical particle, respectively. $\rho_F$ is the mass density of the fluid, $k$ is the Boltzmann constant, and $\mathcal{T}>0$ is the temperature of the suspension that is assumed to be constant throughout this paper as in \cite{GTJPEVA04-1}.

After re-normalization as in \cite{CG06}, we can deduce the scaled VFP-CNS system in the dimensionless form by introducing $\chi=\frac{\mathscr{P}}{\rho_F U^2}$, the Mach number $\up{Ma}$, Reynolds number $\up{Re}$, and Deborah number $\up{De}$:
\begin{equation}\label{VFP-CNS-0}
\left\{
\begin{aligned}
&\p_t f+\frac{1}{\up{Ma}}v\c\n_x f=\frac{1}{\up{De}}\div_v\big[\n_v f+(v-\up{Ma}\,u)f\big],\\[3pt]
&\p_t(\rho u)+\div_x(\rho u\otimes u)+\chi\n_x P(\rho)+\frac{1}{\up{Re}}L u = \frac{\rho_P}{\rho_F \up{Ma}\,\up{De}}\int_{\R^3}(v-\up{Ma}\,u)f \,dv,\\[3pt]
&\p_t \rho+\div_x(\rho u)=0,
\end{aligned}
\right.
\end{equation}
where $\mathscr{P}$ is a pressure unit, $U$ is a macroscopic velocity unit, $P(\rho)=A\rho^\g$ with $A>0$ and $Lu=\frac{2}{9\tilde{\mu}}\tilde{L}u:=-\mu \Delta_x u-(\mu +\lambda)\n_x\up{div}_x u$ with dimensionless constant $\mu>0,\mu + \lambda>0$. We refer to \cite{RRH75, SY07} for more physical background about the dimensionless analysis as well as the Mach number $\up{Ma}$, Reynolds number $\up{Re}$, and Deborah number $\up{De}$.

In this paper, the small Deborah number limit in the light particles regime is considered for the scaled VFP-CNS system \eqref{VFP-CNS-0} in the sense that
\begin{equation}\label{Scale}
\begin{aligned}
\chi=1,\quad \up{Ma}=\v, \quad \up{Re}=1,\quad \up{De}=\v^2, \quad \frac{\rho_P}{\rho_F}=\v^2.
\end{aligned}
\end{equation}
Then the scaled VFP-CNS system \eqref{VFP-CNS-0}  can be rewritten as
\begin{equation}\label{VFP-CNS}
\left\{
\begin{aligned}
&\p_t f^\v+\frac{1}{\v}v\c\n_x f^\v = \frac{1}{\v^2} \div_v \big[ \n_v f^\v+(v-\v u^\v)f^\v \big],\\[3pt]
&\p_t(\rho^\v u^\v)+\div_x(\rho^\v u^\v\otimes u^\v)+\n_x P(\rho^\v)+L u^\v = \frac{1}{\v}\int_{\R^3}(v-\v u^\v)f^\v \,dv,\\[3pt]
&\p_t \rho^\v+\div_x(\rho^\v u^\v) = 0,\\[3pt]
&(f^\v, u^\v, \rho^\v)|_{t=0} = \big(f^{\v,in}(x,v),u^{\v,in}(x),\rho^{\v,in}(x)\big) \to (M,0,1),\quad \up{as}\, \quad  |x| \to +\infty,
\end{aligned}
\right.
\end{equation}
where $M:=M(v)$ is a global normalized Maxwellian equilibrium given by:
\begin{equation}\label{Maxwellian}
M(v)=\frac{1}{(2\pi)^{\frac{3}{2}}}\up{e}^{-\frac{|v|^2}{2}}.
\end{equation}

Inspired by \cite{GTJPEVA04-1}, taking $\v\to 0$, the Navier-Stokes-Smoluchowski (NSS) equation is formally deduced (see Section \ref{sec:formal_analysis}) for $\big(n_0(t,x), u_0(t,x),\rho_0(t,x)\big)$:
\begin{equation}\label{NSS}
\left\{
\begin{aligned}
&\p_t n_0+\up{div}_x(u_0 n_0)=\Delta_x n_0,\\[3pt]
&\p_t(\rho_0 u_0)+\up{div}_x(\rho_0 u_0\otimes u_0)+\n_x P(\rho_0)+Lu_0+\n_x n_0=0,\\[3pt]
&\p_t \rho_0+\div_x(\rho_0 u_0)=0,\\[3pt]
&(n_0,u_0,\rho_0)|_{t=0}=\big( n_0^{in}(x),u_0^{in}(x),\rho_0^{in}(x) \big) \to (1,0,1),\quad \up{as}\, \quad |x|\to +\infty.
\end{aligned}
\right.
\end{equation}

\subsection{Previous results and our contributions}
Due to its physical significance, the fluid-particle two-phase model has attracted people's attention for a long time, which can be traced back to the work of Caflish-Papanicolaou in \cite{CP1983}.
In the seminal work \cite{GTJPEVA04-1, GTJPEVA04}, Goudon-Jabin-Vasseur first established a systematic framework to study the particle-fluid two-phase flow, i.e., the VFP equation coupled with incompressible Navier-Stokes equation, where their study involved two distinct scalings for the light particles (density of the particle is much less than the fluid density, i.e., $\rho_P/\rho_F = O(\v^2)$) and fine particles (density of the particle is of the same order as gas density, i.e., $\rho_P \approx \rho_F$), as informed by dimensional analysis; our model \eqref{VFP-CNS} follows the scaling proposed in \cite{GTJPEVA04-1}, though the compressible Navier-Stokes equation is considered to be coupled with the VFP equation.

To clearly state our contributions in this paper, we begin with a comprehensive overview of the fluid-particle model, encompassing the previous results of both its well-posedness and hydrodynamic limit; furthermore, the mathematical challenges and contributions of our work are illustrated for comparison.\\[-8pt]

\textit{Previous results for ``well-posedness" of the fluid-particle model:}
There are many results concerning the well-posedness of the VFP equation coupled with the compressible Navier-Stokes system. Mellet-Vasseur studied the existence of global weak solutions to the VFP-CNS system with Dirichlet or reflection boundary conditions in \cite{MAVA07}.
Li-Mu-Wang in \cite{LFCMYMWDH17} obtained the global well-posedness of a strong solution when the initial data is a small perturbation of some given equilibrium, along with the algebraic convergence rate of a solution toward the equilibrium, and similar results can be extended to the VFP equation coupled with the non-isentropic CNS equation by Mu-Wang in \cite{MYMWDH20}.
Recently, the existence of a global-in-time strong solution to the VFP-CNS system with specular reflection boundary conditions was proved by Li-Liu-Yang in \cite{LHLLSQYT22}.
Besides, in the presence of various boundary conditions, the global existence of weak solutions to the incompressible case has been shown in \cite{CJACYPK16, LHLLSQYT22, WDHYC15, YC13}, and we refer the readers to our previous work \cite{FQW2023} and the references therein for more results regarding the VFP-INS model. As for VFP equation coupled with the Euler equations, Carrillo-Duan-Moussa \cite{CJADRJMA11} and Duan-Liu \cite{DL13} proved the existence and large time behaviors of the classical solutions to the Cauchy problem, while the stability and asymptotic analysis of such a model was studied by Carrillo-Goudon in \cite{CG06}. \\[-10pt]

\textit{Previous results for ``hydrodynamic limit" and ``limiting two-phase model":}
Another important research field of the fluid-particle model is its hydrodynamic limit. Carrillo-Goudon in \cite{CG06} first studied the VFP equation coupled with the compressible Euler equations, where they formally derived the hydrodynamic model in the so-called ``bubbling" and ``flowing" regime with different scalings.
The rigorous justifications for hydrodynamic limit in these two regimes were completed by Mellet-Vasseur \cite{MV08} and Ballew \cite{JB20}, respectively. It is worth mentioning that both results relied on the relative entropy method (also called the ``modulated energy" method for different asymptotic problems), which is reminiscent of the weak-strong uniqueness principle by Dafermos \cite{Dafermos1996} and Lions \cite{Lions1998}. Recently, Choi-Jung also applied a similar strategy to study the case of VFP equation coupled with compressible Navier-Stokes equations with a density-dependent viscosity \cite{YPCJJ20}.
The hydrodynamic limit results of the VFP equation coupled with the incompressible Navier-Stokes equations were established by Goudon-Jabin-Vasseur in \cite{GTJPEVA04-1, GTJPEVA04} via the weak compactness and relative entropy argument. In our previous work \cite{FQW2023}, we further rigorously justified the hydrodynamic limit with an explicit convergent rate by designing a new expansion form and applying the refined energy estimate.

Furthermore, to rigorously justify the hydrodynamic limit in a stronger topology (e.g., pointwise), it is essential to rely on the well-posedness and regularity theory of the limiting system. In fact, the limiting two-phase system in the so-called “bubbling” regime \cite[Section 4.3]{CG06} corresponds to the NSS system \eqref{NSS}. Within this framework, Huang-Ding-Wen established the local existence and uniqueness of the strong solutions to the compressible NSS system in \cite{HDW16}, which was later extended to global-in-time solutions for initial data with small energy near equilibrium in \cite{DHW17}. In the presence of a ``small” external potential, the global existence of classical solutions was further proved by Ding-Huang-Li in \cite{DHL19}. For additional results on the NSS system, we refer the reader to \cite{B14, CKT11, FZZ12}.
Although this paper primarily focuses on the “bubbling” regime, another class of two-phase limiting models, arising from the “flowing” regime \cite[Section 4.2]{CG06}, has attracted significant attention in recent years. In \cite{VWY2019}, Vasseur-Wen-Yu established the global existence of weak solutions for a bi-fluid two-phase model with a pressure law. This result was further generalized by Novotny-Pokorny in \cite{NP2020} and Wen in \cite{Wen2021}. In \cite{BMZ2019}, Bresch-Mucha-Zatorska proved the global existence of weak solutions to a related two-fluid compressible Stokes system for more general adiabatic index. For further developments on the limiting two-phase system in the “flowing” regime, we refer the reader to \cite{EK2008, EWZ2017, KMN2024}. These results mentioned above motivate our future goal of rigorously justifying the hydrodynamic limit in the “flowing” regime.\\[-10pt]

\textit{Mathematical challenges and our contributions:}
As we discussed above, in the previous results concerning the hydrodynamic limit of the VFP-CNS system \cite{JB20, MV08}, the proof essentially relied on the relative entropy argument.
Albeit successful, the pointwise convergence seems to be tough to obtain through the ``modulated energy" method.
As a consequence, our main purpose in this paper is to rigorously justify the hydrodynamic limit from the VFP-CNS system \eqref{VFP-CNS} to the NSS system \eqref{NSS} in a pointwise sense with an explicit convergence rate.
Specifically speaking, we start with looking for a special class of solutions to the scaled kinetic-fluid coupled system \eqref{VFP-CNS} in the form of Hilbert expansion, where, in contrast with the previous scalings corresponding to the flow regime \cite{MV08} and bubble regime \cite{JB20}, we take the scaling in \eqref{VFP-CNS} from \cite{GTJPEVA04-1} to model the two-phase flow of light particles. The key point here is that the Hilbert expansion is taken around the classical solution to our limiting macroscopic system \eqref{NSS}, hence, the essential part of the proof lies in the establishment of the uniform energy estimate for the remainder system \eqref{Remainder equations}, which is manageable thanks to its less singular and nonlinear property than the original coupled system \eqref{VFP-CNS}.
Note that this sort of strategy and expansion has been applied in justifying the stability of the Boltzmann equation near Couette flows in \cite{DLY2025}, the hydrodynamic limit of the self-organized kinetic equation coupled with a fluid equation in \cite{JNXLJZTF16}, as well as the VFP equation coupled with the incompressible Navier-Stokes equations in our preceding work \cite{FQW2023}. However, even though the decomposition framework has proven highly effective for incompressible flows, extending this methodology to the compressible regime is highly non-trivial. The critical challenges, along with the novelties of our strategies, are illustrated below:
\begin{itemize}

\item In contrast with the incompressible case, where the density is a constant, the appearances of the extra continuity equation and the density-dependent pressure function in the coupled compressible Navier-Stokes equation bring more difficulties in our analysis.
The notable challenge arises from the strong couplings between the momentum equations and the continuity equation, leading to severe singularity in the remainder system such that the usual expansion form of $(f^\v, u^\v, \rho^\v)$, directly derived from the classical Hilbert expansion, would no longer be sufficient to directly handle the singular terms via dissipation. 
Hence, we construct the expansion \eqref{Special form solution} based on our observation and formal analysis (see Section \ref{sec:formal_analysis}).
Note that the primary distinction of the expansion in the compressible case lies in the extra density expansion of $\rho^\v = \rho_0 + \v\rho = (1+h_0) + \v\rho$. 
Still, balancing the nonlinear pressure terms gives rise to a distinctive singular term $\frac{1}{\v} R_2$ in \eqref{L-R0-R3} in the compressible case,

To address this, we introduce a new auxiliary function (e.g., $B^{\g}_{\a}$ in Lemma \ref{estimate of Presure}) 
\[
B^\a_{\g} = \p_x^{\a} \left[(1+h_0+\v\rho)^{\g}\right] -\p_x^\a\left[(1+h_0)^{\g}\right] - \g\v(1+h_0+\v\rho)^{{\g}-1}\p_x^\a\rho,
\]
and explicitly calculate its spatial derivatives before applying Taylor's expansion to each resulting component, where an $\mathcal{O}(\v)$ term naturally emerges and exactly cancels out the singularity of $\frac{1}{\v}$ inherent in the pressure term. This crucial cancellation enables us to uniformly control the nonlinearity from the pressure term and successfully close the energy estimates.
This newly-designed expansion profile provides the ``optimal" order of convergence in the sense of aligning with the order of singularity in the remainder system \eqref{Remainder equations}. 
It is worth mentioning that the applications of such a novel strategy, i.e., re-designing the expansion form to match the singular behavior and close the energy estimate, are expected to be extensively fruitful, especially when the high-order singularity exists. \\[-10pt]

\item In addition, our study extends the previous convergence results by the relative entropy method \cite{JB20, MV08} to a pointwise convergence (see Corollary \ref{Conergence}), thanks to the high-regularity estimates.
To this end, we apply dedicated energy estimates to the reminder system \eqref{Remainder equations} by proposing the refined energy/dissipation structures that incorporate higher regularity in both spatial and velocity variables (see Section \ref{subsec:energy}). More specifically, our refined energy/dissipation functionals \eqref{energy functionals and energy dissipation functional} are designed based on the well-established macro-micro decomposition \cite{DFT10}
such that they consist of contributions from both macroscopic and microscopic parts, which also perfectly match our new expansion form (see \eqref{Special form solution}). The refined total energy estimate of the reminder system \eqref{Remainder equations} can be found in Proposition \ref{A priori estimates}.

\end{itemize}

\subsection{Notations}

The notations that will be used throughout this paper are introduced as follows:

\textup{(i)} $A_1\le B_1$ stands for $ A_1\leq C B_1$ with generic constant $C>0$. $A_1\sim B_1$ stands for $C_1A_1\leq B_1\leq C_2A_1$ with some generic constants $C_1,C_2 > 0$.

\textup{(ii)} For multi-indices $\alpha = (\alpha_1, \alpha_2, \alpha_3)$ and $\beta = (\beta_1, \beta_2, \beta_3)$, we denote
\begin{equation*}
    \partial_x^\alpha = \partial_{x_1}^{\alpha_1} \partial_{x_2}^{\alpha_2} \partial_{x_3}^{\alpha_3}, \quad \partial_v^\beta = \partial_{v_1}^{\beta_1} \partial_{v_2}^{\beta_2} \partial_{v_3}^{\beta_3}.
\end{equation*}

\textup{(iii)}
For $d,e\in\mathbb{N}$, we denote the following inner-product and the spaces:
\begin{equation*}
\begin{aligned}
\l u,w\r_x =& \int_{\R^3} uw \,dx,\,  \quad  \l f,g\r_v=\int_{\R^3} fg \,dv,\, \quad \|u\|_{L^2_x}= \l u,u\r_x^{\frac{1}{2}},\, \quad \|f\|_{L^2_v}= \l f,f\r_v^{\frac{1}{2}}, \\[4pt]
\l f,g \r_{x,v} =& \int_{\R^3} \int_{\R^3} fg \,dvdx,\, \quad \|f\|_{L^2_{x,v}}=\l f,f\r_{x,v}^{\frac{1}{2}},
\end{aligned}
\end{equation*}\\[-10pt]

\begin{equation*}
\begin{aligned}
H^d_{x}:=&\big\{u(x) \ \big| \ \|\p_x^\alpha u\|_{L^2_x} <\infty,\,\up{for any}\,|\a|\leq d\big\},\\[6pt]
H^d_{x,v}:=&\big\{f(x,v) \ \big| \ \|\p_x^\alpha\p_v^\beta f\|_{L^2_{x,v}} <\infty,\,\up{for any}\,|\a|+|\b|\leq d\big\},\\[6pt]
\mathcal{H}^d_{x,v}:=&\big\{f(x,v) \ \big| \ \|\p_x^\alpha\p_v^{\beta+\beta_1} f\|_{L^2_{x,v}} <\infty,\,\up{for any}\,|\a|+|\b|+|\beta_1|\leq d \, \text{and} \, |\beta_1|=1\big\},\\[6pt]
H^d_x H^e_v:=&\big\{f(x,v) \ \big| \ \|\p_x^\alpha\p_v^\beta f\|_{L^2_{x,v}} <\infty,\,\up{for any}\,|\a|\leq d,\,|\b|\leq e\big\}.
\end{aligned}
\end{equation*}\\[-10pt]

\qquad Let $\nu(v)=1+|v|^2$ and denote  $\|\c\|_{\nu}$ by
\begin{equation*}
\|f\|_{\nu} = \Big(\int_{\R^3}\int_{\R^3}|\p_v f(x,v)|^2+|f(x,v)|^2\nu(v) \,dv \,dx\Big)^{\frac{1}{2}}.
\end{equation*}

\textup{(iv)}
By following \cite{DFT10}, the velocity orthogonal projection $\P\up{:}\,L^2_v\to\up{Span}\{\M,v_1\M,v_2\M,v_3\M\}$ is denoted as
\begin{equation*}
\P=\P_0\oplus \P_1,\quad \P_0 g=a\M, \quad \P_1 g=v \c b\M
\end{equation*}
with
$$a=\int_{\R^3} g\M \,dv \quad \text{and} \quad b=\int_{\R^3} g v \M \,dv.$$
\qquad The linearized Fokker-Planck operator $\L$ is denoted as
\begin{equation*}
\L g=-\frac{1}{\M}\up{div}_v\big[M\n_v(\frac{g}{\M})\big],
\end{equation*}
with $\up{Ker} \L=\up{Span}\{\M\}$ in $L^2_v$ norm, and $\L $ can be decomposed by
\begin{equation}\label{Decop-L}
\L g=\L(\I-\P)g+\P_1g.
\end{equation}
\qquad Notice that $\I-\P_0$, $\I-\P$ is self-adjoint in $H^d_{x,v}$, i.e., for any $f,g\in H^d_{x,v}$,
\begin{equation}\label{self-adjoin of I-P}
\begin{aligned}
\l\p_x^\alpha(\I-\P_0)f, \,\p_x^\alpha g\r_{x,v} = \l \p_x^\alpha f, \, \p_x^\alpha(\I-\P_0)g\r_{x,v} \,, \quad \l\p_x^\alpha(\I-\P)f, \,\p_x^\alpha g\r_{x,v} = \l\p_x^\alpha f, \, \p_x^\alpha(\I-\P)g\r_{x,v}\,,
\end{aligned}
\end{equation}
and it can be verified that
\begin{equation}\label{I-P}
\begin{aligned}
(\I-\P_0)(\I-\P)=\I-\P, \quad (\I-\P_0)(\I-\P_0)=\I-\P_0,\quad (\I-\P)(\I-\P)=\I-\P.
\end{aligned}
\end{equation}
\qquad According to \cite{CJADRJMA11}, the dissipative property holds for $\L$, i.e., there exists a constant $c_0>0$ such that
\begin{equation}\label{Poin inequality}
c_0\|(\I-\P)f\|_{\nu}^2+\|b\|^2_{L^2_x}\leq \l \L f, f\r_{x,v}.
\end{equation}

\subsection{Main results}
\label{Main theorem}

Let $h_0(t,x):=\rho_0(t,x) - 1, m_0(t,x):= n_0(t,x) - 1$, the NSS system \eqref{NSS} becomes
\begin{equation}\label{NSS-2}
\left\{
\begin{aligned}
&\p_t m_0+\up{div}_x\big[(1+m_0)u_0\big]=\Delta_x m_0, \\[4pt]
&\p_t[(1+h_0)u_0]+\up{div}_x[(1+h_0) u_0\otimes u_0]+\n_x P(1+h_0)+Lu_0+\n_x m_0=0,\\[4pt]
&\p_t h_0+\up{div}_x[(1+h_0)u_0]=0,\\[4pt]
&(m_0,u_0,h_0)|_{t=0} = (m_0^{in},u^{in}_0,h_0^{in}) \to (0,0,0), \quad \up{as} \quad |x|\to +\infty.
\end{aligned}
\right.
\end{equation}

For the NSS system \eqref{NSS-2} above, we have the following well-posedness result:

\begin{proposition}\label{Solu-NSS}
Assume that the initial data $(m_0^{in},u_0^{in},h_0^{in})$ satisfy\\[5pt]
\up{(i)} $1+\inf_{x\in\R^3} h_0^{in}(x)>0$,\\[5pt]
\up{(ii)} $(m_0^{in},u_0^{in},h_0^{in})\in H^6_x \times H^6_x \times H^6_x$.\\[5pt]
Then, there exists a small constant $\delta_0 > 0$ such that if $\|(m_0^{in},u_0^{in},h_0^{in})\|_{H^6_x}^2 \leq \delta_0$ with $\int u_0^{in}(x)dx=0$, the system \eqref{NSS-2} admits a unique global-in-time solution $(m_0,u_0,h_0)$ satisfying
\begin{equation*}
(m_0,u_0) \in C^0 \left( [0,+\infty); H^6_x \right) \cap C^1 \left( [0,+\infty); H^4_x \right), \quad
h_0 \in C^0 \left( [0,+\infty); H^6_x \right) \cap C^1 \left( [0,+\infty); H^5_x \right).
\end{equation*}
Furthermore, for all $t>0$, there exists a constant $C_0 > 0$ such that
\begin{equation}\label{Macro-part}
\frac{1}{2}\frac{d}{dt} \E_{ma}(t) + C_{0}\D_{ma}(t) \leq 0,
\end{equation}
and
\begin{equation}\label{m0u0h0}
    \|\big( m_0(t,\cdot), u_0(t,\cdot), h_0(t,\cdot) \big) \|_{H^6_x}^2 \le \|(m_0^{in},u_0^{in},h_0^{in})\|_{H^6_x}^2 \le  \delta_0,
\end{equation}
where the macroscopic energy functional $\E_{ma}(t)$ and dissipation functional $\D_{ma}(t)$ are defined in \eqref{energy functionals} and \eqref{energy dissipation functionals}.
\end{proposition}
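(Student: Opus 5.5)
The plan is to combine a local existence theorem with a uniform a priori estimate and close by continuity, in the spirit of \cite{HDW16, DHW17}.

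\textbf{Local existence.} First I invoke (or reprove) local existence and uniqueness of strong solutions to \eqref{NSS-2} in $H^6_x$ on some interval $[0,T_*]$, with $1+h_0$ bounded below. This is standard for the compressible NSS system \cite{HDW16}. The construction linearizes, treating the $m_0$ equation as a parabolic equation with drift $u_0$, the $u_0$ equation as a Lamé parabolic system, and the $h_0$ equation as a transport equation. A contraction argument in a lower norm, combined with boundedness in the high norm, then gives the solution. The time regularity follows from reading the equations directly. The claim $h_0\in C^1H^5_x$ comes from $\p_t h_0=-\div_x[(1+h_0)u_0]$. The claim $(m_0,u_0)\in C^1H^4_x$ comes from the second-order operators $\Delta_x$ and $L$ acting on $H^6_x$ data.

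\textbf{Uniform a priori estimate.} Under the smallness hypothesis $\sup_{[0,T]}\|(m_0,u_0,h_0)\|_{H^6_x}^2\le 2\delta_0$, I would show the differential inequality \eqref{Macro-part}. The functional $\E_{ma}$ will be equivalent to $\|(m_0,u_0,h_0)\|_{H^6_x}^2$, plus a small multiple $\kappa\sum_{|\a|\le5}\l\p_x^\a u_0,\n_x\p_x^\a h_0\r_x$ of an interactive term. The dissipation $\D_{ma}$ controls
\[
\|\n_x m_0\|_{H^6_x}^2+\|\n_x u_0\|_{H^6_x}^2+\|\n_x h_0\|_{H^5_x}^2 .
\]
The steps are as follows.
\begin{itemize}
\item[(a)] Apply $\p_x^\a$ for $|\a|\le6$ and take $L^2_x$ inner products. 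Test the $m_0$ equation with $\p_x^\a m_0$, and test the momentum equation (rewritten as $\p_tu_0+u_0\c\n_xu_0+\frac{P'(1+h_0)}{1+h_0}\n_xh_0+\frac{1}{1+h_0}(Lu_0+\n_xm_0)=0$) with $(1+h_0)\p_x^\a u_0$. Test the $h_0$ equation with $\frac{P'(1+h_0)}{1+h_0}\p_x^\a h_0$ so that the pressure cross terms cancel after integration by parts.
\item[(b)] Handle the coupling terms $\l\n_x m_0,u_0\r$ and $\l\div_x u_0, m_0\r$ from the $m_0$ equation. They are both linear, so I symmetrize them by using the weight $1$ on the $m_0$ equation. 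I expect the linear parts $\l\n_x\p^\a m_0,\p^\a u_0\r+\l\div\p^\a u_0,\p^\a m_0\r$ to cancel exactly.
\item[(c)] Recover dissipation of $h_0$ from the momentum equation through the cross term $\kappa\l\p^\a u_0,\n\p^\a h_0\r$, which yields $\|\n_xh_0\|_{H^5_x}^2$ at the cost of terms already dissipated.
\item[(d)] Bound all nonlinear terms by $\sqrt{\delta_0}\,\D_{ma}$ using Moser-type commutator estimates and Sobolev embedding.
\end{itemize}

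\textbf{Closing the argument.} Combining (a)--(d) gives \eqref{Macro-part} for $\delta_0$ small. Integrating it yields \eqref{m0u0h0}, and a continuity argument then extends the local solution globally.

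\textbf{Main obstacle.} I expect the hardest step to be the top-order $h_0$ estimates in (a) and (c). The transport equation gives no smoothing, so the term $\l\p_x^\a\div_x(h_0u_0),\p_x^\a h_0\r$ with $|\a|=6$ must be handled by writing it as $u_0\c\n_x\p^\a h_0$ and integrating by parts, which gives $\|\div u_0\|_{L^\infty}\|h_0\|_{H^6}^2$. The remaining term $(1+h_0)\div\p^\a u_0$ is paired with the momentum equation through the weight chosen above. A further concern is the zero-frequency degeneracy on $\R^3$: the dissipation only controls gradients, so $\E_{ma}$ cannot decay, but no decay is needed here. The condition $\int u_0^{in}dx=0$ appears unnecessary for this argument, and I would note it but not use it.
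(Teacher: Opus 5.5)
Your argument is sound and has the skeleton the paper intends: local existence, a uniform a priori bound, then continuation. The paper itself gives no proof and only points to the $H^3$ theory of Ding--Huang--Li. However, the Lyapunov functional you build is genuinely different from the paper's $\E_{ma}$.

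\textbf{Your functional.} It is a Matsumura--Nishida functional equivalent to $\|(m_0,u_0,h_0)\|_{H^6_x}^2$. The dissipation of $h_0$ comes from the small cross term $\kappa\l\p_x^\a u_0,\n_x\p_x^\a h_0\r_x$. Its cost is paid by the seventh-order viscous dissipation from your $|\a|=6$ estimate.

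\textbf{The paper's functional.} Judging from its structure, the paper's $\E_{ma}$ is equivalent only to $\|m_0\|_{H^6_x}^2+\|(u_0,h_0)\|_{H^5_x}^2$. It extracts the dissipation of $h_0$ from $\l\p_x^\a h_0,\p_x^{\a-\beta_1}u_0(1+h_0)^2\r_x$. There, $\p_x^\a\up{div}_x u_0$ from the continuity equation cancels against the $(2\mu+\lambda)\n_x\up{div}_x u_0$ part of $Lu_0$, as in Lemma \ref{estimate-kinetic-Mi-3}. It produces $\|\p_t u_0\|_{H^4_x}$ by testing with $\p_t\p_x^\a u_0$; these are the $\|\p_x^\a\up{div}_x u_0\|_{L^2_x}^2$ terms, as in Lemma \ref{estimate-kinetic-Mi-2}.

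\textbf{What you actually prove.} You do not literally obtain \eqref{Macro-part} for the paper's $\E_{ma}$. You obtain $\frac{1}{2}\frac{d}{dt}\tilde{\E}+c\,\tilde{\D}\leq 0$ for your own pair $(\tilde{\E},\tilde{\D})$. You should add the missing step: your $\tilde{\D}$ contains $\|\n_x m_0\|_{H^6_x}^2+\|\n_x u_0\|_{H^6_x}^2+\|\n_x h_0\|_{H^5_x}^2$, and it dominates $\D_{ma}$ once $\p_t u_0$ and $\p_t\n_x m_0$ are read off the equations. The latter requires seven derivatives of $m_0$, which you do have.

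\textbf{What each route buys.}
\begin{itemize}
\item Your functional gives \eqref{m0u0h0} at the $H^6$ level directly. A functional that is only at the $H^5$ level in $(u_0,h_0)$ cannot do this.
\item Your zeroth-order energy is finite. The paper's term $\frac{2A}{\g-1}\|(1+h_0)^{\g/2}\|_{L^2_x}^2$ diverges on $\R^3$ and must be read as $\frac{2A}{\g-1}\int[(1+h_0)^\g-1-\g h_0]\,dx$.
\item The paper's functional sits exactly at the level of $\mathbb{E}$ in Theorem \ref{Main-Limits}.
\end{itemize}
Using your $\tilde{\E}$ in Step 3 of the proof of Proposition \ref{A priori estimates} works equally well. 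The only cost is that $\|(u_0,h_0)\|_{H^6_x}^2$ replaces $\|(u_0,h_0)\|_{H^5_x}^2$ in $\mathbb{E}$, which is harmless since $H^6$ smallness is assumed anyway.

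\textbf{Two small points.} First, $\tilde{\E}$ is only equivalent to the $H^6$ norm. So run the continuity argument with a fixed threshold $\eta$ and take $\delta_0\leq\eta/(2C)$, rather than assuming $\sup\|\cdot\|_{H^6_x}^2\leq 2\delta_0$, which need not reproduce itself. Second, you are right that $\int u_0^{in}\,dx=0$ plays no role on $\R^3$.
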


\begin{remark}
The Proposition \ref{Solu-NSS} can be proved by following similar argument in \cite{DHL19}, where, however, the well-posedness and energy estimates are obtained in the lower-regularity space, i.e.,
\begin{equation*}
(m_0,u_0) \in C^0 \left([0,+\infty); H^3_x \right) \cap C^1 \left([0,+\infty); H^1_x \right), \quad
h_0 \in C^0 \left([0,+\infty);H^3_x \right) \cap C^1 \left([0,+\infty); H^2_x\right)
\end{equation*}
with the corresponding initial conditions $(m_0^{in},u_0^{in},h_0^{in})\in H^3_x \times H^3_x \times H^3_x$, $1+\inf_{x\in\R^3} h_0^{in}(x)>0$, and the ``smallness" of $\|(m_0^{in},u_0^{in},h_0^{in})\|_{H^3_x}$. We are considering to remove the ``smallness" assumption (at least in the two dimensions) in our future work.
\end{remark}

We seek a solution to the VFP-CNS system \eqref{VFP-CNS} in the following form by employing $(m_0,u_0,h_0)$ of the NSS system \eqref{NSS-2}:
\begin{equation}\label{Special form solution}
\begin{aligned}
f^\v= & g_0+\v g_1\M+\v g\M\\
 = &(1+m_0)M + \v[(v\c u_0)(1+m_0)-v\c\n_x m_0]M + \v g\M,\\
u^\v= & u_0 + \v u,\\
\rho^\v = & \rho_0 + \v\rho\\
        = & (1+h_0) + \v\rho,
\end{aligned}
\end{equation}
where $g$, $u$, $\rho$ also depend on $\varepsilon$, but, for simplicity, we will omit the dependence in the notation throughout the paper.

Then, the main theorem is presented as follows:

\begin{theorem}\label{Main-Limits}
Let $(m_0(t,x),u_0(t,x),h_0(t,x))$ be the solution to the NSS equations \eqref{NSS-2} given by Proposition \ref{Solu-NSS}, and assume the initial data of the VFP-CNS system \eqref{VFP-CNS} to be
\begin{equation}\label{Theorem-intial data of VFP-NS}
\begin{aligned}
f^{\v,in}(x,v)=&\ (1+m_0^{in})M+\v\big[(v\c u_0^{in})(1+m^{in}_0)-v\c\n_x m^{in}_0\big]M + \v g^{in}\M,\\
u^{\v,in}(x) = &\ u_0^{in}+\v u^{in},\\
\rho^{\v,in}(x) = &\ 1+h_0^{in}+\v\rho^{in}.
\end{aligned}
\end{equation}
Then, there exist small constants $\v_0,\,\delta > 0$ such that, for any given $\v\in(0,\v_0]$,
the VFP-CNS system \eqref{VFP-CNS} admits a unique global-in-time solution $(f^\v(t,x,v),u^\v(t,x),\rho^\v(t,x))$ in the following form:
\begin{equation}\label{Form of the solution}
\begin{aligned}
f^\v(t,x,v)=&\ (1+m_0)M + \v\big[(v\c u_0)(1+m_0)-v\c\n_x m_0\big]M + \v g\M,\\
u^\v(t,x)=&\ u_0+\v u,\\
\rho^\v(t,x)=&\ 1+h_0+\v\rho,
\end{aligned}
\end{equation} for some $(g,u,\rho)$ satisfying
\begin{equation}\label{Space of the solution}
\begin{aligned}
g \in & C^0([0,\infty);H^4_{x,v})\cap L^2([0,\infty);\mathcal{H}^5_{x,v}),\\
u \in & C^0([0,\infty);H^4_x)\cap L^2([0,\infty);H^5_x),\\
\rho \in & C^0([0,\infty);H^4_x)\cap L^2([0,\infty);H^4_x),
\end{aligned}
\end{equation}
provided that
\begin{equation}\label{Initial data}
\begin{aligned}
\|g^{in}\|_{H^4_{x,v}}^2+\|(u^{in},\rho^{in})\|^2_{H^4_x}+\|m_0^{in}\|_{H^6_x}^2+\|(u_0^{in}, h_0^{in})\|_{H^5_x}^2\leq \delta,
\end{aligned}
\end{equation}
where $\delta$ is independent of $\varepsilon$. In addition, the following energy estimate holds:
\begin{equation}\label{The global estimate}
\begin{aligned}
\sup_{t\geq0} \mathbb{E}(t) + C \int_0^\infty \mathbb{D}(s) \,ds \lesssim \mathbb{E}(0),
\end{aligned}
\end{equation}
for some constants $C > 0$ independent of $\v$, and the energy functional $\mathbb{E}$ and dissipation functional $\mathbb{D}$ are defined as follows:
\begin{equation}\label{instant energy functional}
\begin{aligned}
\mathbb{E}(t)=\|g\|_{H^4_{x,v}}^2+\|(u,\rho)\|^2_{H^4_x}+\|m_0\|_{H^6_x}^2+\|(u_0, h_0)\|_{H^5_x}^2
\end{aligned}
\end{equation}
and the associated dissipative functional
\begin{equation}\label{energy dissipative rate functional}
\begin{aligned}
\mathbb{D}(t)=& \frac{1}{\v^2} \left(\sum_{|\a|+|\b|\leq 4}\|\p_x^\a \n_v^\b(\I-\P)g\|_{\nu}^2+\|b-\v u\|_{H^4_x}^2\right) + \|(\n_x u,\up{div}_x u)\|_{H^4_x}^2+\|(\p_t u,\n_x\rho,\n_x a)\|^2_{H^3_x}\\[4pt]
& +\|(\n_x m_0,\n_x u_0)\|^2_{H^5_x}+\|(\p_t\n_x m_0,\p_t u_0,\n_x h_0,\up{div}_x u_0)\|_{H^4_x}^2.
\end{aligned}
\end{equation}
\end{theorem}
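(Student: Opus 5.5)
The proof will follow the classical scheme of local existence plus uniform a priori estimates plus a continuity argument, applied to the remainder $(g,u,\rho)$ rather than to $(f^\v,u^\v,\rho^\v)$ itself.

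\textbf{Step 1: the remainder system.} Substitute the ansatz \eqref{Special form solution} into \eqref{VFP-CNS}. The profile $(m_0,u_0,h_0)$ solves \eqref{NSS-2}, and $g_1$ is chosen so that the $O(\v^{-1})$ and $O(1)$ terms cancel (this is the formal analysis of Section \ref{sec:formal_analysis}). What remains is a system for $(g,u,\rho)$ of the form
\[
\p_t g+\tfrac1\v v\c\n_x g+\tfrac1{\v^2}\L g=\tfrac1\v v\c u\M+\text{(source and nonlinear terms)},
\]
coupled with a linearized momentum equation containing $\frac{1}{\v}(b-\v u)$ and the singular pressure remainder $\frac1\v R_2$, and with a transport equation for $\rho$. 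Writing $R_2=\frac1\v[P(1+h_0+\v\rho)-P(1+h_0)]$, the leading part of $R_2$ is $\v\g(1+h_0+\v\rho)^{\g-1}\n_x\rho$. Local existence for this hyperbolic–parabolic–kinetic system in the space \eqref{Space of the solution} is standard (iteration or mollification), given $\v$ fixed. The solution is unique because $(f^\v,u^\v,\rho^\v)$ is in one-to-one correspondence with $(g,u,\rho)$.

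\textbf{Step 2: uniform a priori estimate.} This is the core of the argument, essentially Proposition \ref{A priori estimates}. I would apply $\p_x^\a\p_v^\b$ with $|\a|+|\b|\le4$ and take the $L^2$ inner product with $\p_x^\a\p_v^\b g$ in the kinetic equation, with $\p_x^\a u$ in the momentum equation (weighted by $\rho^\v$), and with $\g(1+h_0+\v\rho)^{\g-2}\p_x^\a\rho$ in the continuity equation. The weights are chosen so that the singular coupling terms $\frac1\v\l v\c u\M,g\r$ and $-\frac1\v\int(v-\v u^\v)\v g\M\,dv\c u$ combine into $-\frac1{\v^2}\|b-\v u\|^2$ plus the microscopic dissipation from \eqref{Poin inequality}. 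The pure $x$-derivatives are treated first. Then mixed $v$-derivatives are handled through the microscopic equation for $(\I-\P)g$, where the commutator $[\p_v,v\c\n_x]$ is absorbed using $\v^{-2}$-dissipation. The macroscopic dissipation of $\n_x a$ and $\n_x\rho$ comes from the standard macro–micro moment equations, namely the interaction functionals $\l\p^\a(\I-\P)g\cdots,\n_x\p^\a a\r$ and $\l\p^\a u,\n_x\p^\a\rho\r$, added with small coefficients. The $\p_t u$ dissipation is read off from the momentum equation. Combined with \eqref{Macro-part} for the profile, whose source terms are bounded using \eqref{m0u0h0} and the extra regularity $H^6$ of $m_0$ (needed because $g_1$ contains $\n_x m_0$), the target is
\[
\frac{d}{dt}\widetilde{\mathbb E}+c\,\mathbb D\lesssim(\mathbb E^{1/2}+\mathbb E)\mathbb D,
\]
where $\widetilde{\mathbb E}\sim\mathbb E$.

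\textbf{Main obstacle.} The hardest term is $\frac1\v\p_x^\a R_2$ paired with $\p_x^\a u$. Here I would use the auxiliary quantity $B^\a_\g$ of Lemma \ref{estimate of Presure}. One writes
\[
\p_x^\a[(1+h_0+\v\rho)^\g-(1+h_0)^\g]=\g\v(1+h_0+\v\rho)^{\g-1}\p_x^\a\rho+B^\a_\g.
\]
The first piece is symmetrized against the weighted continuity estimate, leaving only commutators that are $O(1)$. For the second piece, Leibniz and Taylor expansion show that every term in $B^\a_\g$ carries a factor $\v$ times lower-order derivatives of $\rho$ times derivatives of $h_0$ or $\v\rho$. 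This gives $\|\frac1\v B^\a_\g\|_{L^2}\lesssim(\|h_0\|_{H^5}+\|\rho\|_{H^4})\|\n_x\rho\|_{H^3}$, which is controlled by $\mathbb E^{1/2}\mathbb D$. A similar concern arises for the terms $\frac1\v\,v\c\n_x g$ hitting the macroscopic part. These are handled by the cancellation $\l v\c\n_x\P g,\P g\r=0$ and by pushing the remaining singular terms onto the $(\I-\P)g$ and $b-\v u$ dissipations, which carry $\v^{-2}$.

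\textbf{Step 3: closing.} With $\mathbb E(0)\le\delta$ small, the right-hand side is absorbed, and a continuity argument extends the local solution globally with \eqref{The global estimate}, uniformly in $\v\in(0,\v_0]$.
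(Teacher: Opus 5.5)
Your proposal is correct and follows essentially the same route as the paper. The steps match: the remainder system for $(g,u,\rho)$ with local existence; a uniform-in-$\varepsilon$ energy estimate built from pure $x$-derivative estimates, where the coupling yields $\varepsilon^{-2}\|b-\varepsilon u\|^2$; mixed $v$-derivative estimates from the equation for $(\mathbf{I}-\mathbf{P})g$; interaction functionals giving $\nabla_x a$ and $\nabla_x\rho$ dissipation; the profile inequality \eqref{Macro-part} absorbing the linear source terms; the $B^\alpha_\gamma$ decomposition for $\varepsilon^{-1}R_2$; and a continuity argument. The deviations are only technical choices and do not affect the argument. You symmetrize the pressure with the variable weight $A\gamma(1+h_0+\varepsilon\rho)^{\gamma-2}$ on the continuity equation, whereas the paper uses the constant weight $A\gamma$, bounds $(1+h_0+\varepsilon\rho)^{\gamma-1}-1$ perturbatively, and weights its $\rho$-interaction functional by $(1+h_0+\varepsilon\rho)^2$.
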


\begin{remark}
The smallness constant $\delta$ in (\ref{Initial data}) is usually larger than $\delta_0$ given in Proposition \ref{Solu-NSS}, and it will be determined in Section \ref{sec:energy}.
\end{remark}

By applying the Sobolev embedding $H^2 \hookrightarrow L^\infty$, we obtain the following corollary:
\begin{corollary}\label{Conergence}
Under the conditions of Theorem \ref{Main-Limits}, the following pointwise convergence holds:
\begin{equation}\label{Conv of t,x}
|f^\v(t,x,v) - (1+m_0(t,x))M(v)| + |u^\v(t,x) - u_0(t,x)| + |\rho^\v(t,x)- (1+h_0(t,x))|  \lesssim \v
\end{equation}
for $(t,x,v) \in \R^+ \times \R^3 \times \R^3$.
\end{corollary}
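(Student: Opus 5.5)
The corollary follows directly from the representation \eqref{Form of the solution} in Theorem \ref{Main-Limits} together with the uniform bound \eqref{The global estimate}. Subtracting the limiting profiles gives
\begin{equation*}
f^\v-(1+m_0)M=\v\big[(v\c u_0)(1+m_0)-v\c\n_x m_0\big]M+\v g\M,\quad u^\v-u_0=\v u,\quad \rho^\v-(1+h_0)=\v\rho .
\end{equation*}
It therefore suffices to show that $\big[(v\c u_0)(1+m_0)-v\c\n_x m_0\big]M$, $g\M$, $u$ and $\rho$ are bounded in $L^\infty$ uniformly in $t$, $x$, $v$ and $\v$.

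For the fluid parts, $H^2_x\hookrightarrow L^\infty_x$ in $\R^3$ gives
\begin{equation*}
\sup_t\|(u,\rho)(t)\|_{L^\infty_x}\le\sup_t\|(u,\rho)(t)\|_{H^4_x}\le \sup_t\mathbb{E}(t)^{1/2}\le \mathbb{E}(0)^{1/2}\le\delta^{1/2}.
\end{equation*}
Here the last inequality uses \eqref{Initial data} at $t=0$.

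For the first-order corrector, $|v|M(v)$ is bounded on $\R^3$. Moreover $\|u_0\|_{L^\infty_x}+\|m_0\|_{L^\infty_x}+\|\n_x m_0\|_{L^\infty_x}\le \|(u_0,m_0)\|_{H^6_x}$, which is bounded by \eqref{m0u0h0}. This gives a uniform bound $C(1+\delta_0)\delta_0^{1/2}$.

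For the kinetic remainder, I would apply the Sobolev embedding in the six variables $(x,v)\in\R^6$. There $H^4_{x,v}\hookrightarrow L^\infty_{x,v}$, since $4>6/2$; note that $H^2$ alone would not suffice in dimension six, so one uses the full $H^4_{x,v}$ control of $g$. This gives $\|g(t)\|_{L^\infty_{x,v}}\le \|g(t)\|_{H^4_{x,v}}\le \mathbb{E}(0)^{1/2}$. Since $\M\le(2\pi)^{-3/4}$, the term $|g\M|$ is bounded uniformly.

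Alternatively, to stay closer to the paper's phrasing, one can apply $H^2_x\hookrightarrow L^\infty_x$ and $H^2_v\hookrightarrow L^\infty_v$ successively. This requires $\p_x^\a\p_v^\b g\in L^2$ for $|\a|\le2$, $|\b|\le2$, i.e. total order $\le4$, which is contained in $H^4_{x,v}$.

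Collecting the three bounds gives \eqref{Conv of t,x} with an implicit constant depending only on $\delta$ and $\delta_0$, hence independent of $\v$ and valid for all $(t,x,v)\in\R^+\times\R^3\times\R^3$. The only mildly delicate point is the $L^\infty_{x,v}$ bound on $g$, which is where the mixed-derivative regularity of order $4$ in Theorem \ref{Main-Limits} is needed. Since $g(t)\in C^0([0,\infty);H^4_{x,v})$ gives continuity, the bound holds pointwise and not merely almost everywhere.
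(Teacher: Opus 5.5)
Your argument is correct and is essentially the paper's proof. You subtract the profiles using \eqref{Form of the solution}, apply Sobolev embedding (the paper uses exactly the mixed bound $\|\cdot\|_{L^\infty_xL^\infty_v}\lesssim\|\cdot\|_{H^2_xH^2_v}$ of your alternative), and conclude with the uniform bound \eqref{The global estimate} together with \eqref{Initial data}. The only difference is that you bound $g$ directly in $L^\infty_{x,v}$ and use $\sqrt{M}\leq(2\pi)^{-3/4}$, whereas the paper estimates $g\sqrt{M}$ in $H^2_xH^2_v$ via the splitting $g=(\mathbf{I}-\mathbf{P})g+(a+v\cdot b)\sqrt{M}$; your version shows that step is unnecessary.
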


The rest of the paper is organized as follows: in Section \ref{sec:formal_analysis}, we present the formal analysis. In Section \ref{sec:energy}, we derive global-in-time \textit{a priori} estimates for the remainder system. Finally, the proofs of the main Theorem and Corollary are illustrated in Section \ref{sec:proof_of_main}.

\section{Formal analysis}
\label{sec:formal_analysis}

In this section, we formally derive the NSS equations \eqref{NSS}
\begin{equation}\label{Formal form}
\begin{aligned}
&f^\v =f_0+\v f_1 + \v^2 f_2 + \cdots,\\
&u^\v =u_0+\v u_1 + \v^2 u_2 + \cdots,\\
&\rho^\v =\rho_0 + \v \rho_1 + \v^2\rho_2 + \cdots.
\end{aligned}
\end{equation}

\textbf{Step 1:} Plugging the expansion \eqref{Formal form} into the VFP-CNS system $\eqref{VFP-CNS}_1$, we have
\begin{equation}\label{Formally form about f}
\begin{aligned}
&\p_t(f_0+\v f_1+\v^2 f_2+\cdots)+\frac{1}{\v}v\c\n_x(f_0+\v f_1+\v^2 f_2+\cdots )\\
= & \frac{1}{\v^2}\up{div}_v\Big\{\n_v(f_0+\v f_1+\v^2 f_2+\cdots)+\big[v-\v(u_0+\v u_1+\v^2 u_2+\cdots)\big] \cdot(f_0+\v f_1+\v^2 f_2+\cdots)\Big\}.
\end{aligned}
\end{equation}

The $\mathcal{O}(\frac{1}{\v^2})$ of \eqref{Formally form about f} reads
\begin{equation*}
    \up{div}_v(\n_v f_0+v f_0)=\up{div}_v[M\n_v(\frac{f_0}{M})]=0,
\end{equation*}
which implies that
\begin{equation}\label{macar-n0}
f_0(t,x,v)=n_0(t,x) M(v)
\end{equation}
with $n_0(t,x)$ to be determined.

The $\mathcal{O}(\frac{1}{\v})$ of \eqref{Formally form about f} reads
\begin{equation}\label{macar-1}
v\c\n_x f_0+\up{div}_v(u_0 f_0)=\up{div}_v[M\n_v(\frac{f_1}{M})].
\end{equation}
By substituting \eqref{macar-n0} into \eqref{macar-1} and further simplification, the equation \eqref{macar-1} becomes
\begin{equation*}
\up{div}_v\big[M(\n_v\frac{f_1}{M}+\n_x n_0-u_0 n_0)\big]=0.
\end{equation*}
Hence, $f_1$ can be solved as
\begin{equation}\label{Solu-f1}
f_1(t,x,v) = \big[(v\c u_0)n_0(t,x) - v\c\n_x n_0(t,x)\big]M(v).
\end{equation}

The $\mathcal{O}(1)$ of \eqref{Formally form about f} reads
\begin{equation*}
\p_t f_0+v\c\n_x f_1=\up{div}_v(\n_v f_2+vf_2-u_0f_1-u_1f_0),
\end{equation*}
which shows
\begin{equation}\label{Equa of n0}
\int_{\R^3} \p_t f_0+v\c\n_x f_1 dv=0.
\end{equation}

Substituting \eqref{macar-n0} and \eqref{Solu-f1} into \eqref{Equa of n0} gives
\begin{equation}\label{macro-n0}
\p_t n_0+\up{div}_x(n_0 u_0)=\Delta_x n_0.
\end{equation}

\textbf{Step 2:} Then, we substitute the expansions \eqref{Formal form} into the VFP-CNS system $\eqref{VFP-CNS}_2$ gives
\begin{equation}\label{Formal VFP-2}
\begin{aligned}
&(\rho_0+\v\rho_1+\cdots)\big[\p_t(u_0+\v u_1+\cdots)+(u_0+\v u_1+\cdots)\c\n_x(u_0+\v u_1+\cdots)\big]\\[3pt]
&+\n_x P(\rho_0+\v\rho_1+\cdots)+L(u_0+\v u_1+\cdots)\\[3pt]
=& \frac{1}{\v}\int_{\R^3}\big[v-\v(u_0+\v u_1+\cdots)\big](f_0+\v f_1+\cdots)dv,\\
\end{aligned}
\end{equation}

The $\mathcal{O}(\frac{1}{\v})$ of \eqref{Formal VFP-2} reads
\begin{equation*}
\int_{\R^3} vf_0 \, dv=0,
\end{equation*}
which is automatically correct since $f_0=n_0 M$ in \eqref{macar-n0}.

The $\mathcal{O}(1)$ of \eqref{Formal VFP-2} reads
\begin{equation*}
\begin{aligned}
\rho_0(\p_t u_0+u_0\c\n_x u_0)+\n_x P(\rho_0)+Lu_0&=\int_{\R^3}v f_1 \,dv -\int_{\R^3}u_0 f_0 \,dv.
\end{aligned}
\end{equation*}
This implies that
\begin{equation}\label{macro-u0}
\rho_0(\p_t u_0+u_0\c\n_x u_0)+\n_x P(\rho_0)+Lu_0+\n_x n_0=0,
\end{equation}
by considering the form of $f_0$ in \eqref{macar-n0} and $f_1$ in \eqref{Solu-f1}.

\textbf{Step 3:} Finally, substituting the expansions \eqref{Formal form} into the VFP-CNS system $\eqref{VFP-CNS}_3$ gives
\begin{equation}\label{Formal VFP-3}
\p_t(\rho_0+\v\rho_1+\cdots)+\up{div}_x\big[(\rho_0+\v\rho_1+\cdots)(u_0+\v u_1+\cdots)\big]=0.
\end{equation}

The $\mathcal{O}(1)$ of \eqref{Formal VFP-3} reads
\begin{equation}\label{macar-rho0}
\p_t\rho_0+\up{div}_x(\rho_0 u_0)=0.
\end{equation}

Combining the equations \eqref{macro-n0}, \eqref{macro-u0} and \eqref{macar-rho0}, it formally yields the limiting system \eqref{NSS}.

\section{Energy estimate}
\label{sec:energy}

\subsection{The remainder system}
Based on the formal analysis in Section \ref{sec:formal_analysis}, we introduce the remainder terms of the VFP-CNS system \eqref{VFP-CNS}:
\begin{equation}\label{Solu of special form-0}
g=\frac{f^\v-g_0-\v g_1\M}{\v \M}, \quad u=\frac{u^\v-u_0}{\v}, \quad \rho=\frac{\rho^\v-\v\rho_0}{\v},
\end{equation}
where 
\[
g_0=n_0M,\, \quad g_1=\big[(v\c u_0)n_0-v\c\n_x n_0\big]\M
\]
and $(n_0,u_0,\rho_0)$ is the solution to the NSS system \eqref{NSS}.
In other words, we seek a solution to the VFP-CNS system \eqref{VFP-CNS} in our newly-designed expansion form \eqref{Special form solution}.

By denoting $m_0=n_0-1,\, h_0=\rho_0-1$, the remainder system of $(g,u,\rho)$ is deduced as follows:
\begin{equation}\label{Remainder equations}
\left\{
\begin{aligned}
&\p_t g-\frac{1}{\v}v\c u\M+\frac{1}{\v^2}\L g=\frac{1}{\v}R_0+R_1,\\[3pt]
&(1+h_0+\v\rho)(\p_t u+u_0\c\n_x u+u\c\n_x u_0)+Lu-\frac{1}{\v}(b-\v u)=\frac{1}{\v}R_2+R_3,\\[3pt]
&\p_t \rho+\up{div}_x(h_0 u+\rho u_0)+\up{div}_x u+\v\up{div}_x(\rho u)=0,\\[3pt]
& (g, u, \rho)|_{t=0} = \big(g^{in}(x,v), u^{in}(x), \rho^{in}(x)\big) \to (0,0,0),\quad \up{as} \quad |x| \to +\infty,
\end{aligned}
\right.
\end{equation}
where
\begin{equation}\label{L-R0-R3}
\begin{aligned}
\L g=&-\frac{1}{\M}\up{div}_v[M\n_v(\frac{g}{\M})]=-\Delta_v g+\frac{|v|^2}{4}g-\frac{3}{2}g,\\[3pt]
Lu= & -\mu \Delta_x u-(\mu +\lambda)\n_x\up{div}_x u,\\[3pt]
g_1=&\big[(v\c u_0)(m_0+1)-v\c\n_x m_0\big]\M,\\[3pt]
R_0=&-(\I-\P_0)(v\c\n_x g_1)-v\c\n_x g-\frac{1}{\M}\up{div}_v(u_0 g_1\M+u_0 g\M+um_0 M),\\[3pt]
R_1=&-\p_t g_1-\frac{1}{\M}\up{div}_v(u g_1\M+ug\M),\\[3pt]
R_2=&-\big[\n_x P(1+h_0+\v\rho)-\n_x P(1+h_0)\big],\\[3pt]
R_3=&-\big[(\rho\p_t u_0+\rho u_0\c\n_x u_0+u m_0 +u_0 a)+\v(u\c\n_x u+h_0 u\c\n_x u+ua)+\v^2 \rho u\c\n_x u\big],
\end{aligned}
\end{equation}
and $(m_0,u_0,h_0)$ is the classical solution to
\begin{equation}\label{NSS-3}
\left\{
\begin{aligned}
&\p_t m_0+\up{div}_x\big[(1+m_0)u_0\big] = \Delta_x m_0,\\[3pt]
&\p_t\big[(1+h_0)u_0\big]+\up{div}_x\big[(1+h_0) u_0\otimes u_0\big]+\n_x P(1+h_0)+Lu_0+\n_x m_0=0,\\[3pt]
&\p_t h_0+\up{div}_x\big[(1+h_0)u_0\big]=0,\\[3pt]
&(m_0, u_0, h_0)|_{t=0}=(m^{in}_0(x), u^{in}_0(x), h^{in}_0(x)) \to (0,0,0), \quad \up{as} \quad |x|\to +\infty.
\end{aligned}
\right.
\end{equation}

We first present the local well-posedness of the remainder system \eqref{Remainder equations}.

\begin{proposition}\label{Local-in-time solution of the remainder equations}
Under the conditions of Proposition \ref{Solu-NSS}, assume that $\|g^{in}\|_{H^4_{x,v}}+\|(u^{in},\rho^{in})\|_{H^4_x}\leq M_0<\infty$, then for any given $0<\v \leq \v_0 := \v_0(M_0)$, there exists a $T_\v:=T_\v(M_0)>0$ such that the remainder system \eqref{Remainder equations} admits a unique solution $(g,u,\rho)$ satisfying
\begin{equation*}
\begin{aligned}
&g\in C^0([0,T_\v];H^4_{x,v})\cap L^2([0,T_\v];\mathcal{H}^5_{x,v}),\\
&u\in C^0([0,T_\v];H^4_x)\cap L^2([0,T_\v];H^5_x),\\
&\rho\in C^0([0,T_\v];H^4_x)\cap L^2([0,T_\v];H^4_x).
\end{aligned}
\end{equation*}
\end{proposition}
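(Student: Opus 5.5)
The plan is a standard iteration (contraction) scheme at fixed $\v\in(0,\v_0]$. We linearize the remainder system \eqref{Remainder equations} around the previous iterate and derive uniform bounds on a short time interval $[0,T_\v]$. We then show the iterates contract in a lower norm and pass to the limit. Because $\v$ is fixed, the singular factors $\frac1\v,\frac1{\v^2}$ are just large constants. They only shrink $T_\v$ and need not be balanced by dissipation; uniformity in $\v$ is the job of Proposition \ref{A priori estimates}, not of this local result. The background $(m_0,u_0,h_0)$ is given by Proposition \ref{Solu-NSS} in $C^0H^6_x\cap C^1H^4_x$. This is more than enough regularity for the source terms $R_0,R_1,R_3$, which involve $\n_x g_1$ and $\p_t g_1$, hence up to two derivatives of $m_0$ and $\p_t\n_x m_0$, $\p_t u_0$, all in $H^4_x$ uniformly in time.

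Concretely, set $(g^0,u^0,\rho^0)=(g^{in},u^{in},\rho^{in})$. Given $(g^n,u^n,\rho^n)$, define $(g^{n+1},u^{n+1},\rho^{n+1})$ by three linear problems.
\begin{itemize}
\item[(a)] The linear Fokker--Planck equation
\[
\p_t g^{n+1}+\tfrac1\v v\c\n_x g^{n+1}+\tfrac1{\v^2}\L g^{n+1}+\tfrac1\v \tfrac{1}{\M}\div_v\big((u_0+\v u^n)g^{n+1}\M\big)=\text{data}(g_1,u^n,m_0),
\]
where $v\c\n_x g$ and the $u$-transport of $g$ are moved to the left.
\item[(b)] The parabolic Lam\'e system for $u^{n+1}$, with coefficient $1+h_0+\v\rho^n$ in front of $\p_t u^{n+1}$ and right-hand side evaluated at $(g^n,\rho^n)$. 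This includes $\frac1\v b^n$ and $\frac1\v R_2(\rho^n)$.
\item[(c)] The linear transport equation
\[
\p_t\rho^{n+1}+(u_0+\v u^n)\c\n_x\rho^{n+1}+\rho^{n+1}\div_x(u_0+\v u^n)=-\div_x(h_0u^n)-\div_x u^n .
\]
\end{itemize}
Existence for each linear problem is classical: hypoelliptic/parabolic theory for (a), parabolic theory for (b), characteristics for (c). Positivity $1+h_0+\v\rho^n\ge c>0$ follows for small $\v$ from Proposition \ref{Solu-NSS}(i) and \eqref{m0u0h0}, via the Sobolev embedding $H^2_x\hookrightarrow L^\infty_x$.

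Next we prove uniform bounds. We apply $\p_x^\a\p_v^\b$ with $|\a|+|\b|\le4$ to (a) and take inner products in $L^2_{x,v}$. We use \eqref{Poin inequality} for the $\L$ part; commutators $[\p_v^\b,v\c\n_x]$ and $[\p_v^\b,\L]$ are controlled by lower $v$-derivatives together with $\frac1{\v^2}\|\cdot\|_\nu$ dissipation. This gives $g^{n+1}\in L^2\mathcal H^5_{x,v}$. The term $\div_v(u g\M)/\M$ produces $v g$, which is absorbed by the $\nu$-weight in the dissipation. For (b), $H^4_x$ energy estimates give $u^{n+1}\in C^0H^4_x\cap L^2H^5_x$. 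For $\frac1\v R_2$ it suffices, at fixed $\v$, to bound $\|\n_x[P(1+h_0+\v\rho^n)-P(1+h_0)]\|_{H^3_x}\le C\v\|\rho^n\|_{H^4_x}$. No cancellation of the type in Lemma \ref{estimate of Presure} is needed here. For (c), standard $H^4_x$ transport estimates hold since $u_0+\v u^n\in L^2H^5_x\cap L^\infty H^4_x$; the source $\div_x u^n\in L^2H^4_x$ is exactly where the extra $L^2H^5_x$ regularity of $u$ is needed. Gronwall then gives
\[
\sup_{[0,T]}\mathcal{N}(g^{n+1},u^{n+1},\rho^{n+1})\le C_1M_0^2+C(\v)\,T\,\big(1+\sup_{[0,T]}\mathcal N(g^n,u^n,\rho^n)\big)^k ,
\]
with $\mathcal N$ the norm of the spaces in the statement. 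Choosing $T_\v$ small keeps all iterates in a ball of radius $2C_1M_0^2$.

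Finally, we estimate differences of consecutive iterates in the lower norm $C^0L^2\cap L^2(\text{dissipation})$ and get a contraction for $T_\v$ smaller still. Interpolation with the uniform high bounds and weak-$*$ compactness yields a solution in the stated spaces; continuity in time follows from the usual energy-equality argument. Uniqueness comes from the same difference estimate.

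The main obstacle is the $v$-derivative estimates in (a) at top order. The commutator $[\p_v^\b,v\c\n_x]$ gives $\p_x$ falling on lower $v$-derivatives, which must be weighed against the mixed-index energy. We handle this with a weighted sum $\sum C_{|\b|}\|\p_x^\a\p_v^\b g\|^2$ with decreasing constants, as in \cite{DFT10}. Mismatched derivative counts between $u$ (in $H^4$) and the $v\c u\M$ coupling are absorbed at fixed $\v$ using the parabolic gain in $u$.
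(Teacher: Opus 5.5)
Your proposal is correct and is essentially the paper's argument. The paper proves this proposition only by invoking the standard contraction-mapping scheme (citing \cite{LFCMYMWDH17, MNT83}), and your fixed-$\v$ iteration is that scheme: a linear Fokker--Planck step, a Lam\'e step with frozen density coefficient and a linear transport step, followed by a low-norm contraction. One small correction: Proposition \ref{Solu-NSS} only gives $\p_t\n_x m_0\in C^0H^3_x$, not $H^4_x$ uniformly in time, so at top order the source $\p_t g_1$ must instead be controlled in $L^2_tH^4_x$ through the time-integrated dissipation $\D_{ma}$ in \eqref{Macro-part}, which your Gronwall step accommodates without change.
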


\begin{proof}
The proof is based on the standard contractive mapping argument.
We refer to \cite{LFCMYMWDH17, MNT83} for more details.
\end{proof}

\subsection{Energy and dissipation functionals}
\label{subsec:energy}

In this subsection, we present the energy and associated dissipation structure based on the Micro-Macro decomposition.

To state the energy estimate, we introduce the temporal energy and dissipation functionals of different parts:
\begin{itemize}
    \item Macroscopic energy functional of $(m_0, u_0, h_0)$: for $|\a_1| = 6$ and $|\beta_1|=1$,
    \begin{equation}\label{energy functionals}
    \begin{split}
     \E_{ma}(t) =&  \|\p_x^{\a_1} m_0 \|^2_{L^2_x} +\sum_{|\a|=1}^{5}K_{1,|\a|}\|(\p_x^\a h_0,\p_x^\a m_0,\p_x^\a u_0)\|^2_{L^2_x}+\sum_{|\a|=0}^{4}K_{2,|\a|}\|\p_x^\a\up{div}_x u_0\|^2_{L^2_x}\\[4pt]
&+\sum_{|\a|=1}^{5} K_{3,|\a|}\l\p_x^{\a}h_0,\p_x^{\a-\beta_1} u_0(1+h_0)^2\r_x+K_1 \Big[\|m_0\|^2_{L^2_x}+\|\sqrt{1+h_0}\, u_0\|^2_{L^2_x}\\[4pt]
&+\frac{2A}{\g-1}\|(1+h_0)^{\frac{\g}{2}}\|^2_{L^2_x}\Big],
    \end{split}
    \end{equation}
    and the associated macro dissipation functionals
    \begin{equation}\label{energy dissipation functionals}
    \D_{ma}(t) = \|(\n_x m_0,\n_x u_0)\|^2_{H^5_x} + \|(\p_t\n_x m_0,\p_t u_0,\n_x h_0,\up{div}_x u_0)\|_{H^4_x}^2,
    \end{equation}
    where $K_1$ and $K_{i,|\a|}, \, i=1,2,3,|\a|=1,\cdots,5$ are all positive constants. \\
    Note that one can verify that $\E_{ma}(t)\sim \|m_0\|_{H^6_x}^2+\|(u_0, h_0)\|_{H^5_x}^2$.\\

    \item Microscopic energy and dissipation functional of $(g,u,\rho)$: for $|\beta_1|=1$,
    \begin{equation}\label{part of energy functionals}
    \begin{aligned}
    \E_{mi,K,1}(t)=&\ \| g\|^2_{H^4_x L_v^2}  + \sum_{|\a|=0}^4\|\sqrt{1+h+\v\rho} \, \p_x^\a u\|^2+A\g\|\rho\|^2_{H^4_x},\\[4pt]
    \E_{mi,K,2}(t)=&\ \mu\|\n_x u\|^2_{H^3_x}+(\mu+\lambda)\|\up{div}_x u\|^2_{H_x^3},\\[4pt]
    \E_{mi,K,3}(t)=&\ \|\p_x\rho\|^2_{H^3_x} + \frac{2}{2\mu+\lambda}\sum_{|\a|=1}^4\l\p_x^\a\rho,(1+h+\v\rho)^2\p_x^{\a-\beta_1}u\r_x,\\[4pt]
    \E_{mi,K,4}(t)=&\ \sum_{0\leq|\a|+|\b|\leq 3}\bar{C}_{\a,\b}\|\p_x^\a\p_v^{\b+\beta_1}(\I-\P)g\|^2_{L^2_x},\\[4pt]
    \E_{mi,F}(t)=&\ \|(a,b)\|_{H^3_x}^2+\v\sum_{|\a|=0}^3\l\p_x^{\a+\beta_1}a,\p_x^\a b\r_x,
\end{aligned}
\end{equation}
and the associated micro dissipation functionals:
\begin{equation}\label{part of dissipation functionals}
\begin{aligned}
\D_{mi,K,1}(t)=&\ \frac{1}{\v^2}\left[\sum_{|\a|\leq 4}\|\p_x^\a(\I-\P)g\|_{\nu}^2+\|b-\v u\|_{H^4_x}^2\right] + \|(\n_x u,\up{div}_x u)\|_{H^4_x}^2,\\[4pt]
\D_{mi,K,2}(t)=&\ \|\p_t u\|_{H^3_x}^2,\\[4pt]
\D_{mi,K,3}(t)=&\ \|\n_x\rho\|^2_{H^3_x},\\[4pt]
\D_{mi,K,4}(t)=&\ \frac{1}{\v^2}\sum_{|\a|+|\b|\leq 3}\|\p_x^\a \p_v^{\b+\beta_1}(\I-\P)g\|_{\nu}^2,\\[4pt]
\D_{mi,F}(t)=&\ \|\n_x a\|^2_{H^3_x},
\end{aligned}
\end{equation}
with the constant $\bar{C}_{\a,\b} > 0$.

\item Total temporal energy and dissipation functionals:
\begin{equation}\label{energy functionals and energy dissipation functional}
\begin{aligned}
\E(t)=&\sum_{i=1}^4\lambda_i\E_{mi,K,i}(t)+\lambda_5\E_{mi,F}(t)+\lambda_6\E_{ma}(t),\\
\D(t)=&\sum_{i=1}^4\D_{mi,K,i}(t)+\D_{mi,F}(t)+\D_{ma}(t),
\end{aligned}
\end{equation}
where $\lambda_i> 0, \,1\leq i\leq 6$ are constants determined in \eqref{lambda constants}.
\end{itemize}

Considering the definitions of $\E(t),\,\D(t)$ in \eqref{energy functionals} and \eqref{energy dissipation functionals}, one can check that
\begin{equation}\label{equivalent}
\mathbb{E}(t)\sim \E(t), \quad \mathbb{D}(t)\sim\D(t),
\end{equation}
and also the energy functional $\mathbb{E}(t)$ is continuous for any $0\leq t\leq T$, i.e., there exists a positive constant $\Bar{C}$ depending on some known constants only, such that
\begin{equation*}
\begin{aligned}
\frac{1}{\Bar{C}}\E(t)\leq \mathbb{E}(t)\leq \Bar{C}\E(t), \quad \frac{1}{\Bar{C}}\D(t) \leq \mathbb{D}(t) \leq \Bar{C} \D(t).
\end{aligned}
\end{equation*}

We propose the following \textit{a priori} assumption: for any given $T>0$,
\begin{equation}\label{A priori assumption}
\sup_{0\leq t\leq T}\big[\|g\|_{H^4_{x,v}}^2+\|(u,\rho)\|^2_{H^4_x} \big] \leq \delta_1
\end{equation}
for a small positive constant $\delta_1$.

Now we are in a position to state our main energy estimate in Section \ref{sec:energy}.

\begin{proposition}\label{A priori estimates}
Under the assumptions of Theorem \ref{Main-Limits} and \eqref{A priori assumption}, let $(g,u,\rho)$ be the classical solutions to the remainder system \eqref{Remainder equations}, then, for any $0\leq t\leq T$,
\begin{equation}\label{total_energy_prop}
\E(t) + \tilde{C} \int_0^t \D(s) \, d s \leq \E(0),
\end{equation}
where the constants $\tilde{C} >0$ are independent of $\v$, $\delta$, $\delta_1$ and $T$.
\end{proposition}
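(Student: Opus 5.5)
The estimate will be proved by deriving separate differential inequalities for each block of the total functional $\E(t)$ in \eqref{energy functionals and energy dissipation functional} and then combining them with the weights $\lambda_1,\dots,\lambda_6$. For the macroscopic part we invoke Proposition~\ref{Solu-NSS} directly, which gives \eqref{Macro-part}. For the remainder system \eqref{Remainder equations} we aim at estimates of the form
\begin{equation*}
\frac{d}{dt}\E_{mi,\cdot}(t)+c\,\D_{mi,\cdot}(t)\lesssim (\delta+\delta_1)^{1/2}\D(t)+\text{(lower-order dissipation of other blocks)},
\end{equation*}
where the right-hand side is obtained from the a priori assumption \eqref{A priori assumption}, the bound \eqref{m0u0h0}, and the smallness of $\v$. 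Taking $\delta,\delta_1,\v_0$ small then absorbs all error terms and gives \eqref{total_energy_prop} after integration in time.

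\textbf{Step 1 (basic $H^4_x L^2_v$ estimate, $\E_{mi,K,1}$).} We apply $\p_x^\a$ with $|\a|\le4$ to the equations. We test the $g$-equation with $\p_x^\a g$, the $u$-equation with $\p_x^\a u$, and the $\rho$-equation with $A\g\,\p_x^\a\rho$. The singular linear coupling $-\frac1\v v\c u\M$ against $\frac1\v b$ combines with the drag $-\frac1\v(b-\v u)$ to produce
\begin{equation*}
\frac{1}{\v^2}\langle \L g,g\rangle-\frac{2}{\v}\langle b,u\rangle+\|u\|^2=\frac{1}{\v^2}\Big(\langle \L(\I-\P)g,(\I-\P)g\rangle+\|b-\v u\|^2\Big),
\end{equation*}
which, by \eqref{Decop-L} and \eqref{Poin inequality}, yields the $\frac1{\v^2}$ dissipation in $\D_{mi,K,1}$. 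The viscous term supplies $\|(\n_x u,\div_x u)\|_{H^4_x}^2$. The linear terms $\div_x u$ in the $\rho$-equation and $\gamma(1+h_0)^{\gamma-1}\n_x\rho$ (the linear part of $\frac1\v R_2$) cancel after integration by parts, up to commutators with $h_0$ that are small by \eqref{m0u0h0}.

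The $\frac1\v R_0$ terms must be tested only against $(\I-\P)g$ or $b-\v u$, so that Cauchy--Schwarz gives $\eta\,\v^{-2}\|(\I-\P)g\|_\nu^2+C\|\cdot\|^2$. We check this term by term. The term $(\I-\P_0)(v\c\n_x g_1)$ is already orthogonal to $\M$. Its $\P_1$-component pairs with $b$, which we write as $(b-\v u)+\v u$. The term $v\c\n_x g$ is handled by the antisymmetry of transport on the $\P g$ part and by bounding its micro part through $\n_x a$ and $\n_x b$.

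\textbf{Step 2 (the pressure remainder, main obstacle).} For $\frac1\v\p_x^\a R_2$ we use the auxiliary quantity $B^\a_\g$. Writing
\begin{equation*}
\p_x^\a\big[(1+h_0+\v\rho)^\g-(1+h_0)^\g\big]=\g\v(1+h_0+\v\rho)^{\g-1}\p_x^\a\rho+B^\a_\g,
\end{equation*}
the first piece, after division by $\v$, is the symmetrizable linear term handled through the weight $\sqrt{1+h_0+\v\rho}$ together with the continuity equation. For $B^\a_\g$ we expand by Leibniz and Taylor to show that every term carries an explicit factor $\v$ multiplied by products of lower derivatives of $\rho$ and $h_0$. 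This gives
\begin{equation*}
\tfrac1\v\|B^\a_\g\|_{L^2}\lesssim(\|h_0\|_{H^5}+\|\rho\|_{H^4})\,\|\n_x\rho\|_{H^3}.
\end{equation*}
I expect this bookkeeping, in particular keeping the top-order term $\p_x^\a\rho$ with $|\a|=4$ out of $B^\a_\g$, to be the most delicate part of the argument.

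\textbf{Step 3 (remaining blocks).} We obtain $\E_{mi,K,2}$ and $\D_{mi,K,2}$ by testing the $\p_x^\a$ $u$-equation ($|\a|\le3$) with $\p_t\p_x^\a u$. We obtain $\D_{mi,K,3}$, the dissipation of $\n_x\rho$, by testing the momentum equation with $(1+h_0+\v\rho)^2\n_x\p_x^{\a-\beta_1}\rho$, using the continuity equation to treat the time derivative, as in the cross term of $\E_{mi,K,3}$. The velocity-derivative estimates $\E_{mi,K,4}$ come from applying $\p_x^\a\p_v^{\b+\beta_1}$ to the equation for $(\I-\P)g$, which is obtained by applying $\I-\P$ to \eqref{Remainder equations}$_1$. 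The commutator $[\p_v,v\c\n_x]=\p_x$ produces $\frac1\v\|\p_x^{\a+\b'}(\I-\P)g\|$, which is absorbed with $\bar C_{\a,\b}$ chosen in decreasing order of $|\b|$. Finally, the fluid-type estimate for $(a,b)$ follows from the macroscopic moment equations
\begin{equation*}
\p_t a+\tfrac1\v\div_x b=\dots,\qquad \p_t b+\tfrac1\v\n_x a+\tfrac1{\v^2}(b-\v u)=\dots,
\end{equation*}
tested with $\n_x a$ through the interactive term $\v\langle\p_x^{\a+\beta_1}a,\p_x^\a b\rangle$, which produces $\|\n_x a\|_{H^3}^2$.

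We then choose $\lambda_1\gg\lambda_3,\lambda_4,\lambda_5$ and $\lambda_2$ so that the cross terms between blocks are absorbed and $\E\sim\mathbb E$ holds by \eqref{equivalent}. We take $\lambda_6$ large enough for the source terms in $m_0,u_0,h_0$ (for example $\p_tg_1$ in $R_1$) to be controlled by $\D_{ma}$. With these choices the combined inequality integrates to \eqref{total_energy_prop}.
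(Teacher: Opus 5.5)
Your outline follows the paper's proof block by block. It uses the same $B^\a_\g$ device for the pressure. It derives the same five differential inequalities: the basic $H^4_x$ estimate with the drag identity producing $\frac{1}{\v^2}\|b-\v u\|^2$, the $\p_t u$ estimate, the $\rho$--$u$ cross functional giving $\|\n_x\rho\|_{H^3_x}$, the $\p_v$-estimates for $(\I-\P)g$ with weights decreasing in $|\b|$, and the $a$--$b$ interactive functional. It closes with the same hierarchical weights and smallness of $\E^{1/2}$. One step does not go through as you state it, and it fails in the same way in the paper: the claim that the source $\p_t g_1$ in $R_1$ is controlled by $\D_{ma}$ once $\lambda_6$ is large.

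At $|\a|=0$ this source contributes $-\l \p_t g_1, g\r_{x,v}=-\l \p_t j_0, b\r_x$, with $j_0=(1+m_0)u_0-\n_x m_0$. Splitting $b=(b-\v u)+\v u$ disposes of the first piece but leaves $-\v\l\p_t j_0,u\r_x$. By the NSS momentum equation, $\p_t u_0=-\n_x(A\g h_0+m_0)+(\text{terms harmless against } \v u)$, so this leftover contains $-\v\l A\g h_0+m_0,\div_x u\r_x$. The dissipation $\D$ contains no $L^2_x$ norm of $h_0$, $m_0$ or $u$; it contains only their gradients, and $\p_t u_0$. The best available bound is therefore $\v\|(h_0,m_0)\|_{L^2_x}\|\div_x u\|_{L^2_x}\leq \eta\D+C\v^2\|(h_0,m_0)\|_{L^2_x}^2$. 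But $\int_0^t\|(h_0,m_0)\|_{L^2_x}^2\,ds$ is not bounded uniformly in $t$ for general $H^6_x$ data, so this is a genuine low-frequency obstruction.

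Compare the bilinear leftovers $\l u_0 a,u\r_x$ and $\l u m_0,u\r_x$, produced by the same splitting of $b$ in the $\frac{1}{\v}R_0$ terms. They are equally non-estimable at $|\a|=0$, but they cancel exactly against $-u_0a-um_0$ in $R_3$; your outline should state this. The pure source $\p_t j_0$ has no such partner.

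The paper treats $B_{1212}$ at $|\a|=0$ via $\v\|\p_t u_0+\p_t\n_x m_0\|_{\dot H^{-1}_x}\|u\|_{\dot H^1_x}$ and then bounds the $\dot H^{-1}_x$ norm by an $L^2_x$ norm, which is false on $\R^3$. A uniform-in-time estimate needs extra low-frequency information on the NSS solution. One option is a $\dot H^{-1}_x$ or $L^1_x$ assumption on $(h_0^{in},m_0^{in},u_0^{in})$, giving $\int_0^\infty\|(h_0,m_0)\|_{L^2_x}^2\,dt<\infty$. Without something of this kind, the argument only yields constants depending on $T$.

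A smaller point: in the basic estimate the whole term $\frac{1}{\v}\l v\c\n_x\p_x^\a g,\p_x^\a g\r_{x,v}$ vanishes by antisymmetry. Bounding its macro--micro cross terms instead, as you suggest, would need five $x$-derivatives of $(a,b)$ at $|\a|=4$, which the dissipation does not control.
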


The following Corollary can be deduced by Proposition \ref{A priori estimates} together with the equivalent condition \eqref{equivalent}.

\begin{corollary}\label{Instant energy-dissipative rate}
Under the assumptions of Theorem \ref{Main-Limits} and \eqref{A priori assumption}, let $(g,u,\rho)$ be the classical solutions to the remainder system \eqref{Remainder equations}, then, for any $t \in [0,T]$,
\begin{equation}\label{instant energy functional-energy dissipative rate functional}
\mathbb{E}(t) + \tilde{C}^*\int_0^t \mathbb{D}(s) \,ds \leq C^* \mathbb{E}(0),
\end{equation}
where the constants $\tilde{C}^*\,,C^* > 0$ are independent of $\v$, $\delta$, $\delta_1$ and $T$.
\end{corollary}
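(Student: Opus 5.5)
The corollary is a direct consequence of Proposition \ref{A priori estimates} once we use the equivalence \eqref{equivalent} between the weighted functionals $\E,\D$ and the unweighted ones $\mathbb{E},\mathbb{D}$. The plan is to sandwich \eqref{total_energy_prop} between the two sides of that equivalence.

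\textbf{Step 1 (equivalence constants).} First we fix the constant $\Bar{C}\ge 1$ with
\[
\Bar{C}^{-1}\E(t)\le\mathbb{E}(t)\le\Bar{C}\E(t),\qquad \Bar{C}^{-1}\D(t)\le\mathbb{D}(t)\le\Bar{C}\D(t).
\]
We must check that $\Bar{C}$ does not depend on $\v,\delta,\delta_1,T$, and this is the only point needing care.

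\begin{itemize}
\item On the energy side, the weights $\lambda_i$, $K_{i,|\a|}$, $K_1$, $\bar{C}_{\a,\b}$ are fixed constants.
\item The cross terms are the coupling $\frac{2}{2\mu+\lambda}\l\p_x^\a\rho,(1+h+\v\rho)^2\p_x^{\a-\beta_1}u\r_x$, the term $\v\l\p_x^{\a+\beta_1}a,\p_x^\a b\r_x$, and $K_{3,|\a|}\l\p_x^\a h_0,\p_x^{\a-\beta_1}u_0(1+h_0)^2\r_x$. Each is absorbed by Cauchy--Schwarz, using $\v\le\v_0\le 1$ and the smallness of $h_0$ and $\v\rho$ in $L^\infty$ (from \eqref{m0u0h0}, \eqref{A priori assumption} and $H^2\hookrightarrow L^\infty$). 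This uses the choice of the $\lambda_i$ in \eqref{lambda constants}, which makes the diagonal terms dominate.
\item The weights $\sqrt{1+h_0+\v\rho}$ and $(1+h_0)^{\g/2}$ are bounded above and below by constants depending only on the uniform positivity of $1+h_0$ and on $\delta_0,\delta_1$, which are small but fixed.
\item The $L^2$ norms of $(a,b)$ together with $(\I-\P)g$ and its derivatives control $\|g\|_{H^4_{x,v}}$. Here $\P g$ is explicit in $v$, so its $v$-derivatives are bounded by $\|(a,b)\|$.
\item On the dissipation side, the two functionals list the same terms term by term; $\D_{mi,K,1}$ and $\D_{mi,K,4}$ together reproduce the $\frac{1}{\v^2}$ part of $\mathbb{D}$.
\end{itemize}

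\textbf{Step 2 (chaining).} Given Step 1, for any $t\in[0,T]$,
\[
\mathbb{E}(t)+\frac{\tilde C}{\Bar{C}}\int_0^t\mathbb{D}(s)\,ds\le \Bar{C}\Big(\E(t)+\tilde C\int_0^t\D(s)\,ds\Big)\le\Bar{C}\,\E(0)\le\Bar{C}^2\,\mathbb{E}(0).
\]
The middle inequality is exactly \eqref{total_energy_prop}. Multiplying through by $\Bar{C}^{-1}$ is not needed: we simply set $\tilde{C}^*=\tilde C/\Bar{C}$ and $C^*=\Bar{C}^2$. Both are independent of $\v,\delta,\delta_1,T$, since $\tilde C$ is by Proposition \ref{A priori estimates} and $\Bar{C}$ is by Step 1.

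The main obstacle, if any, is the uniformity in $\v$ of the equivalence in Step 1. Specifically, the $\v$-weighted mixed term in $\E_{mi,F}$ and the density-weighted terms must not degenerate. Both are handled by $\v\le1$ and the a priori smallness, so the corollary reduces to bookkeeping.
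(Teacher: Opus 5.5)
Your proposal is correct and follows the paper's own route: the paper obtains the corollary in one line from Proposition \ref{A priori estimates} and the equivalence \eqref{equivalent}, and your sandwich argument with $\tilde{C}^*=\tilde{C}/\Bar{C}$, $C^*=\Bar{C}^2$ simply makes the constants explicit. Two caveats on your Step 1, the first inherited from the paper: as literally written, the term $\frac{2A}{\g-1}\|(1+h_0)^{\g/2}\|^2_{L^2_x}$ in $\E_{ma}$ equals $\frac{2A}{\g-1}\int_{\R^3}(1+h_0)^{\g}\,dx=\infty$, so the equivalence only holds for the intended relative potential energy $\frac{2A}{\g-1}\int_{\R^3}\big[(1+h_0)^{\g}-1-\g h_0\big]\,dx$, which is comparable to $\|h_0\|^2_{L^2_x}$ by your two-sided bound on $1+h_0$. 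Second, rather than saying $\Bar{C}$ depends on $\delta_1$, you should take it uniform for all $\delta_1$ below a fixed threshold, since the continuity argument needs $C^*$ independent of $\delta_1$.
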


\subsection{Energy estimate of the remainder system}
\label{subsec:energy_estimate_remainder}

In this subsection, we will prove the Proposition \ref{A priori estimates}.
To this end, we first present the following lemma to evaluate $\frac{1}{1+h_0+\v\rho}$ and the singular term $\frac{1}{\v}\big[\n_x P(1+h_0+\v\rho)-\n_xP(1+h_0)\big]$ from pressure.
\begin{lemma}\label{estimate of Presure}
Under the assumptions of Proposition \ref{Solu-NSS} and \eqref{A priori assumption}, we have the following uniform estimates independent of the parameter $\varepsilon$,
\begin{equation}\label{estimate of 1+h+rho}
\| B^\a\|_{L^p_x}\lesssim
\left\{
\begin{array}{ccc}
&\|\n_x h_0\|_{H^{|\a|-1}_x}+\v\|\n_x\rho\|_{H^{|\a|-1}_x} \qquad  &\up{if}\, 1\leq|\a|\leq 4 \quad \up{and} \quad p=2,\\[4pt]
&\|\n_x h_0\|_{H^{|\a|}_x}+\v\|\n_x\rho\|_{H^{|\a|}_x} \qquad  &\up{if}\, 1\leq|\a|\leq 3 \quad \up{and} \quad 2<p\leq 6,\\[4pt]
&\|\n_x h_0\|_{H^{|\a|+1}_x}+\v\|\n_x\rho\|_{H^{|\a|+1}_x} \qquad  &\up{if} \,1\leq|\a|\leq 3 \quad \up{and} \quad  p=\infty,
\end{array}
\right.
\end{equation}
where $B^\a$ is denoted as
\begin{equation}
B^\a=\p_x^\a \left( \frac{1}{1+h_0+\v\rho} \right),
\end{equation}
and
\begin{equation}\label{estimate of 1+h0+rho}
\begin{aligned}
\|(1+h_0)^{\g-1}-1\|_{L^\infty_x}&\lesssim \|h_0\|_{H^2_x},\\[4pt]
\|(1+h_0)^{\g-1}-1\|_{H^1_x}&\lesssim \| h_0\|_{H^1_x},\\[4pt]
\|(1+h_0+\v\rho)^{\g-1}-(1+h_0)^{\g-1}\|_{L^\infty_x}&\lesssim \v\|\rho\|_{H^2_x},\\[4pt]
\|(1+h_0+\v\rho)^{\g-1}-(1+h_0)^{\g-1}\|_{H^1_x}&\lesssim\v\|\rho\|_{H^1_x}.
\end{aligned}
\end{equation}
Furthermore, we have
\begin{equation}\label{estimate of nable P-1}
\| B^\a_{\g}\|_{L^p_x}\lesssim
\left\{
\begin{array}{ccc}
\displaystyle \v\|\rho\|_{H^1_x}\|\n_x\rho\|_{L^2_x}& \qquad \up{if}\,\, p=2, \quad |\a| =0 \quad \up{and} \quad p=3,\,|\a|=0\\[4pt]
\displaystyle \v\|\rho\|_{H^2_x}\|\n_xh_0\|_{H^1_x}& \qquad \up{if}\,\, p=2,\,|\a|=1 \quad \up{and} \quad p=3,\,|\a|=1,\\[4pt]
\displaystyle \v\|\rho\|_{H^2_x}\|(\n_xh_0,\n_x\rho)\|_{H^1_x}& \qquad  \up{if} \,\, p=2,\,|\a|=2,\\[4pt]
\displaystyle \v\|\rho\|_{H^2_x}\|(\n_xh_0,\n_x\rho)\|_{H^2_x}& \qquad \up{if}\,\, p=3,\,|\a|=2 \quad \up{and} \quad p=2,\,|\a|=3,\\[4pt]
\displaystyle \v\|\rho\|_{H^3_x}\|(\n_xh_0,\n_x\rho)\|_{H^3_x}& \qquad \up{if}\,\, p=3,\,|\a|=3 \quad \up{and} \quad p=2,\,|\a|=4,\\
\end{array}
\right.
\end{equation}
where $B^\a_{\g}$ is denoted as
\begin{equation}\label{B-la}
B^\a_{\g} = \p_x^{\a} \left[(1+h_0+\v\rho)^{\g}\right] -\p_x^\a\left[(1+h_0)^{\g}\right] - \g\v(1+h_0+\v\rho)^{{\g}-1}\p_x^\a\rho.
\end{equation}
\end{lemma}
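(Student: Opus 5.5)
The plan is to treat all three groups of estimates by one mechanism: the Faà di Bruno (chain rule) formula for derivatives of a composite $F(1+h_0+\v\rho)$, with $F(s)=s^{-1}$, $s^{\g-1}$ or $s^{\g}$. Each product is then bounded by Hölder's inequality and the Sobolev embeddings $H^1_x\hookrightarrow L^p_x$ ($2\le p\le 6$) and $H^2_x\hookrightarrow L^\infty_x$. Throughout, Proposition \ref{Solu-NSS} and the a priori assumption \eqref{A priori assumption} give $\|h_0\|_{H^5_x}+\|\rho\|_{H^4_x}\lesssim \delta_0^{1/2}+\delta_1^{1/2}$. Hence $1+h_0+\v\rho\in[\tfrac12,\tfrac32]$ pointwise, so every derivative $F^{(k)}(1+h_0+\v\rho)$ is bounded in $L^\infty_x$ uniformly in $\v$.

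For $B^\a$ I write
\[
\p_x^\a\Big(\frac{1}{\phi}\Big)=\sum_{k=1}^{|\a|}\sum_{\a_1+\cdots+\a_k=\a,\ |\a_i|\ge1} c\,\phi^{-1-k}\prod_{i=1}^k\p_x^{\a_i}\phi, \qquad \phi=1+h_0+\v\rho .
\]
In each product, the factor with the most derivatives goes in $L^2$ (or $L^p$, or $L^\infty$). The remaining factors have at most $|\a|-1\le 3$ derivatives and go in $L^\infty$ or $L^3/L^6$; by the smallness above they are bounded by constants. This gives $\|\n_x h_0\|_{H^{|\a|-1}}+\v\|\n_x\rho\|_{H^{|\a|-1}}$ for $p=2$. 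For $2<p\le6$ the leading factor costs one extra derivative through $H^1\hookrightarrow L^p$, and for $p=\infty$ it costs two through $H^2\hookrightarrow L^\infty$; this matches the shifted indices in \eqref{estimate of 1+h+rho}. The four bounds in \eqref{estimate of 1+h0+rho} follow from the mean value theorem. For instance, $(1+h_0+\v\rho)^{\g-1}-(1+h_0)^{\g-1}=\v\rho\int_0^1(\g-1)(1+h_0+\theta\v\rho)^{\g-2}d\theta$. One derivative of this produces $\v\n\rho\cdot(\text{bounded})+\v\rho\,\n(h_0,\rho)\cdot(\text{bounded})$, which is estimated in $L^2$ using $\|\rho\|_{L^3}\|\n h_0\|_{L^6}$ together with smallness.

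The main point is $B^\a_\g$, where the $\mathcal O(\v)$ cancellation has to be exhibited; this is the obstacle. For $|\a|=0$, Taylor's formula gives $B^0_\g=(1+h_0+\v\rho)^\g-(1+h_0)^\g-\g\v(1+h_0+\v\rho)^{\g-1}\rho=\mathcal O(\v^2\rho^2)$, since both the first-order Taylor term and the subtracted term equal $\g\v\phi^{\g-1}\rho$ to leading order. Then $\|\v^2\rho^2\|_{L^2}\lesssim \v\|\rho\|_{L^\infty}\|\rho\|_{L^2}$, and more carefully $\v\|\rho\|_{H^1}\|\n\rho\|_{L^2}$ via $L^3\times L^6$ and Gagliardo–Nirenberg. (The literal $|\a|=0$ case looks like a higher-order remainder; I would check whether the intended index is shifted, but the same argument applies.)

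For $|\a|\ge1$ I expand both $\p_x^\a(\phi^\g)$ and $\p_x^\a(\phi_0^\g)$, with $\phi_0=1+h_0$, by Faà di Bruno. The unique term in $\p_x^\a(\phi^\g)$ where all derivatives fall on a single factor $\v\p^\a\rho$ is exactly $\g\phi^{\g-1}\v\p^\a\rho$, and it cancels against the last term of $B^\a_\g$. The terms with all derivatives on $h_0$ pair with the corresponding terms of $\p^\a(\phi_0^\g)$, and each difference has the form $[F^{(k)}(\phi)-F^{(k)}(\phi_0)]\prod\p^{\a_i}h_0=\mathcal O(\v\rho)\prod\p^{\a_i}h_0$. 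All remaining mixed terms contain at least one factor $\v\p^{\a_i}\rho$ and at least one further derivative factor $\p^{\a_j}(h_0\text{ or }\v\rho)$. Every term is therefore $\v\times(\rho$ or a derivative of $\rho$ of order $\le|\a|-1)\times(\text{a derivative of }(h_0,\rho))$, which is the claimed bilinear structure $\v\|\rho\|_{H^k}\|(\n h_0,\n\rho)\|_{H^{k'}}$. The remaining work is bookkeeping of the Hölder splittings: put the low-order factor in $L^\infty$ or $L^6$ and the high-order one in $L^2$ or $L^3$, case by case for $|\a|\le4$ and $p\in\{2,3\}$, so that the norms listed in \eqref{estimate of nable P-1} come out.
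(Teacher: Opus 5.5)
Your proposal is correct and follows essentially the same route as the paper's appendix proof. Both use explicit chain-rule (Fa\`a di Bruno) expansions of $\partial_x^\alpha$ applied to $(1+h_0+\varepsilon\rho)^{-1}$ and $(1+h_0+\varepsilon\rho)^{\gamma}$, cancel the top-order term $\gamma\varepsilon(1+h_0+\varepsilon\rho)^{\gamma-1}\partial_x^\alpha\rho$, pair the pure-$h_0$ terms with those of $\partial_x^\alpha[(1+h_0)^\gamma]$, and finish with H\"older/Sobolev bookkeeping; the only cosmetic difference is that the paper writes each paired difference $F^{(k)}(1+h_0+\varepsilon\rho)-F^{(k)}(1+h_0)$ as $B^0_{\gamma-k}$ plus an explicit $\varepsilon\rho$ term (using $|B^0_{\tilde\gamma}|\lesssim\varepsilon^2\rho^2$), whereas you bound it directly by the mean value theorem, which works equally well since that factor is never differentiated.
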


The complete proof can be found in Appendix \ref{sec:appdenix-pressure}, and we explain the key points in the following Remark \ref{re-lemma}.

\begin{remark}\label{re-lemma}

Note that handling the highly nonlinear pressure singularity relies fundamentally on the ``away from vacuum" assumption for the fluid density. A direct Taylor expansion cannot be utilized to estimate the pressure term because it yields an intractable remainder. Specifically, a direct expansion gives: 
\[
\begin{aligned} 
\|\n_xP(1+h_0+\v\rho)-\n_xP(1+h_0)\|_{H^s_x}=&\|\n_x[(1+h_0+\v\rho)^\g]-\n_x[(1+h_0)^\g]\|_{H^s_x}\\ =&\|\g\v\n_x\rho+\frac{\g(\g-1)}{2}\n_x\big[(1+h_0+\eta\rho)^{\g-2}\v^2\rho^2\big]\|_{H^s_x}, 
\end{aligned}
\]
where the intermediate state $\eta=\eta(t,x)\in(0,\v)$ depends on time $t$ and space $x$. Hence, the higher-order derivative term $\p_x^\a\eta(t,x)$ is hard to bound. 

To overcome this difficulty, our proof structure instead relies on a recursive exact algebraic expansion. We first explicitly calculate the spatial derivatives $\p_x^\a[(1+h_0+\v\rho)^\g]-\p_x^\a[(1+h_0)^\g]$ as shown in \eqref{nabla-x-1+h+rho} for each $0 \leq |\a| \leq 4$. Then, we can apply the Taylor expansion (strictly up to the second order) at the base level $(1+h_0)$ for each resulting term $(1+h_0+\v\rho)^{\tilde{\g}}-(1+h_0)^{\tilde{\g}}$ (see \eqref{estimate of bar-gamma-rho}). This decoupling approach isolates the singular components, bypassing the problematic $\eta(t,x)$ term entirely and allowing the singularities to be systematically absorbed by our adapted energy-dissipation structure.

\end{remark}

\subsubsection{Estimates of kinetic part of the remainder system}
\label{subsubsec:kinetic_remainder}

In this subsection, we will present the {\it a priori} estimates of the kinetic part of the remainder system \eqref{Remainder equations}-\eqref{NSS-3}, where the coercivity of $\L$ plays an essential role.

We start with the estimate of $\E_{mi,K,1}(t)$ and $\D_{mi,K,1}(t)$.
\begin{lemma}\label{estimate-kinetic-Mi-1}
Under the assumptions of Theorem \ref{Main-Limits} and \eqref{A priori assumption}, then, for any $0\leq t\leq T$,
\begin{equation}\label{Step one}
\begin{aligned}
\frac{1}{2}\frac{d}{dt}\E_{mi,K,1}(t) + C_1\D_{mi,K,1}(t) \leq \tilde{C}_1 \left(\D_{ma}(t)+\E^{\frac{1}{2}}(t)\D(t)\right),
\end{aligned}
\end{equation}
where the constants $\tilde{C}_1,\,C_1 > 0$ are independent of $\v$, $\delta$, $\delta_1$ and $T$.
\end{lemma}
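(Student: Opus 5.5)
We plan a weighted $L^2$ estimate at each spatial order $|\a|\le 4$. We apply $\p_x^\a$ to the three equations of \eqref{Remainder equations} and test them, respectively, against $\p_x^\a g$ in $L^2_{x,v}$, $\p_x^\a u$ in $L^2_x$, and $A\g\,\p_x^\a\rho$ in $L^2_x$. To get the natural weight we first divide the momentum equation by $1+h_0+\v\rho$, differentiate, and multiply back. The commutators this produces involve $B^\a$ from Lemma \ref{estimate of Presure}, which is bounded by $\|\n_x h_0\|+\v\|\n_x\rho\|$ and so is $\lesssim \D_{ma}^{1/2}+\D^{1/2}$ multiplied by a small norm. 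Summing over $|\a|\le4$ gives $\frac12\frac{d}{dt}\E_{mi,K,1}$ on the left. The time derivative that lands on the weight, $\p_t(1+h_0+\v\rho)$, is handled with the continuity equations and contributes terms of the form $\E^{1/2}\D$.

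\textbf{Dissipation.} The key structural step is the singular pair $-\frac1\v v\c u\M$ and $-\frac1\v(b-\v u)$. Since $\l \frac1\v v\c u\M,g\r_{x,v}=\frac1\v\l u,b\r_x$, we combine this with $\l\frac1{\v^2}\L g,g\r$ written through \eqref{Decop-L} as $\frac1{\v^2}\l\L(\I-\P)g,g\r+\frac1{\v^2}\|b\|^2$. The cross terms then assemble into
\[
\frac1{\v^2}\|b\|^2-\frac2\v\l u,b\r+\|u\|^2=\frac1{\v^2}\|b-\v u\|^2 .
\]
Together with \eqref{Poin inequality} and the Lamé term this yields $C_1\D_{mi,K,1}$. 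In the $\rho$ equation, $\l\div_x u,\p_x^\a\rho\r$ must cancel against the linear part of the pressure in $\frac1\v R_2$. Writing $R_2$ at order $\a$ via $B^\a_\g$,
\[
\frac1\v\p_x^\a R_2=-A\,\n_x\Big(\g(1+h_0+\v\rho)^{\g-1}\p_x^\a\rho+\tfrac1\v B^\a_\g\Big),
\]
the leading term, after replacing $(1+h_0+\v\rho)^{\g-1}$ by $1$, exactly cancels the $\div_x u$ coupling after integration by parts. The error of this replacement is controlled by \eqref{estimate of 1+h0+rho}, which gives $(\|h_0\|_{H^2}+\v\|\rho\|_{H^2})\|\n_x\rho\|\|\n_x u\|\lesssim(\D_{ma}+\E^{1/2}\D)$-type bounds. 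The remainder $\frac1\v B^\a_\g$ carries a factor $\v$ by \eqref{estimate of nable P-1}, so paired with $\n_x\p_x^\a u$ it is $\lesssim\|\rho\|_{H^3}\|\n(h_0,\rho)\|_{H^3}\|\n u\|_{H^4}$. For $|\a|=4$, any $\p^5_x\rho$ that appears is moved onto $u$ by integrating by parts.

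\textbf{Source terms.} The terms $\frac1\v R_0$ and $R_1$ are paired with $\p_x^\a g$. The linear pieces depending only on $(m_0,u_0)$, namely $(\I-\P_0)(v\c\n_x g_1)$ and $\p_t g_1$, are orthogonal to $\M$. Each such piece therefore pairs only with $(\I-\P_0)g$, that is, with $(\I-\P)g$ and $b$, and Cauchy--Schwarz gives
\[
\tfrac1\v\|(\I-\P)g,\ b\|\,\|\n_x(m_0,u_0)\|\le \eta\D_{mi,K,1}+C_\eta\D_{ma}.
\]
Here $b$ is split as $(b-\v u)+\v u$. This is where the $\D_{ma}$ on the right-hand side comes from. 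The transport term $\l v\c\n_x\p_x^\a g,\p_x^\a g\r$ vanishes. The $\div_v$ terms are also orthogonal to $\M$ after division, so they are bounded by $\frac1\v\|(\I-\P)g\|_\nu$ times (small)$\times$(dissipation). The terms in $R_3$ and the convection terms are bounded by standard Moser/Sobolev estimates as $\E^{1/2}\D$, or as $\D_{ma}^{1/2}\D^{1/2}\E^{1/2}$, which is absorbed in the same way. The only zero-order macroscopic quantities $a,\rho,u$ lacking direct dissipation always appear multiplied by a gradient factor lying in $\D$.

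\textbf{Main obstacle.} The main difficulty is the top-order ($|\a|=4$) pressure term. I would first try the cancellation above and then check that every factor carrying five derivatives falls on $u$, where $\|\n_x u\|_{H^4}\subset\D_{mi,K,1}$ absorbs it. The $L^3$ bounds in \eqref{estimate of nable P-1} are used for the trilinear products with $L^6$ norms of $\n u$.
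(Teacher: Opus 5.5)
Your architecture matches the paper's. You use the same multipliers $\p_x^\a g$, $\p_x^\a u$, $A\g\p_x^\a\rho$, the same completion of the square into $\frac{1}{\v^2}\|\p_x^\a(b-\v u)\|^2_{L^2_x}$, and the same splitting of $\frac1\v R_2$ through $B^\a_\g$ with exact cancellation of $A\g\l\p_x^\a\rho,\p_x^\a\mathrm{div}_x u\r_x$. Dividing the momentum equation by $1+h_0+\v\rho$ first is only a cosmetic variant of the paper's commutator with $\p_t u$.

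\textbf{The drag terms at $|\a|=0$.} This is where your argument fails. The undifferentiated fields $u,a,\rho,u_0,m_0,h_0$ enter $\D$ only through their gradients, so they are controlled only in $L^p_x$ with $p\geq 6$. Consequently a term with three undifferentiated factors is never $\lesssim \E^{1/2}\D$: two factors in $L^6_x$ force the third into $L^{3/2}_x$. Your claim that the $\mathrm{div}_v$ terms of $\frac1\v R_0$ are bounded by $\frac1\v\|(\I-\P)g\|_\nu\times$(small)$\times$(dissipation) is wrong. These terms are orthogonal to $\M$ but not to $v\M$, and their $\P_1$-component is exactly $\frac1\v\l u_0 a+m_0u,\,b\r_x$. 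Likewise, the $R_3$ terms $-(u_0a+um_0)$ tested against $u$ give $-\l u_0a+m_0u,\,u\r_x$. This is not $\lesssim\E^{1/2}\D$, contrary to your claim that zero-order quantities always come with a gradient factor. The missing idea, used by the paper in $B_{1131}$ and implicitly in $B_{114}$ and $B_{14}$, is to add the two contributions before estimating. Then $\frac1\v\l u_0a+m_0u,\,b-\v u\r_x\lesssim \|(u_0,m_0)\|_{L^3_x}\|(a,u)\|_{L^6_x}\frac1\v\|b-\v u\|_{L^2_x}\lesssim\E^{1/2}\D$, which is the nonlinear counterpart of your linear square completion. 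The same pairing is needed for $\l ua,b\r_x$ from $R_1$ against $-\v\l ua,u\r_x$ from $R_3$.

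\textbf{The forcing $-\p_t g_1$ at $|\a|=0$.} By parity, $(\I-\P_0)(v\c\n_x g_1)$ is orthogonal to all of $\P g$, so the $\frac1\v$ source meets only $(\I-\P)g$. It is the $O(1)$ term $\p_t g_1$ that meets $b$. Splitting $b=(b-\v u)+\v u$ leaves, up to sign, $\v\l \p_t u_0-\p_t\n_x m_0,\,u\r_x$. Cauchy--Schwarz then produces $\|u\|_{L^2_x}$, which is not in $\D$, so your bound $\leq \eta\D_{mi,K,1}+C_\eta\D_{ma}$ holds only for $|\a|\geq 1$.

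The paper treats this term separately by $\dot H^{-1}_x$--$\dot H^{1}_x$ duality, so that only $\|\n_x u\|_{L^2_x}$ appears. For the $\p_t\n_x m_0$ part this is clean: it becomes $\v\l\p_t m_0,\mathrm{div}_x u\r_x$, and $\|\p_t m_0\|_{L^2_x}\lesssim\D_{ma}^{1/2}$ by the $m_0$-equation. For the $\p_t u_0$ part it requires $\|\p_t u_0\|_{\dot H^{-1}_x}$. By the NSS momentum equation this is, at the linear level, $\|A\g h_0+m_0\|_{L^2_x}$ plus dissipative terms, which lies in $\E_{ma}$ but not in $\D_{ma}$. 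So this term needs more than either your Cauchy--Schwarz or the paper's brief duality step supplies.
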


\begin{proof}
By applying the derivative operator $\p_x^\a$ with $0\leq|\a|\leq 4$ to the remainder system \eqref{Remainder equations}, then multiplying each equation by $\p_x^\a g$, $\p_x^\a u$, $A\g\p_x^\a\rho$, respectively, and taking the integration with respect to $(x,v)$ and $x$, we have, for $|\beta|=1$,
\begin{multline}\label{Estimate of nabla-g-u-rho-0}
\frac{1}{2}\frac{d}{dt} \left(\|\p_x^\a g\|^2_{L^2_{x,v}} +\|\sqrt{1+h_0+\v\rho} \, \p_x^\a u\|^2_{L^2_x} +A\g\|\p_x^\a\rho\|^2_{L^2_x}\right) + \frac{c_0}{\v^2}\|\p_x^\a(\I-\P)g\|^2_{\nu}\\[4pt]
+\frac{1}{\v^2}\|\p_x^\a(b-\v u)\|^2_{L^2_x}
+\mu\|\p_x^{\a+\beta}u\|^2_{L^2_x}+(\mu+\lambda)\|\p_x^\a\up{div}_x u\|^2_{L^2_x}\\[4pt]
\leq\underbrace{\frac{1}{\v}\l\p_x^{\a} R_0,\p_x^\a g\r_{x,v}}_{B_{11}}+\underbrace{\l\p_x^\a R_1,\p_x^\a g\r_{x,v}}_{B_{12}}
+\underbrace{\frac{1}{\v}\l\p_x^\a R_2,\p_x^\a u\r_x}_{B_{13}}
+\underbrace{\l\p_x^\a R_3,\p_x^\a u\r_x}_{B_{14}}\\[4pt]
\underbrace{-\l\p_x^\a\big[(1+h_0)(u_0\c\p_xu+u\c\p_x u_0)\big],\p_x^\a u\r_x}_{B_{15}}
\underbrace{-\sum_{1 \leq |\tilde{\a}| \leq |\a|} C_{\a,\tilde{\a}} \l\p_x^{\tilde{\a}}(1+h_0+\v\rho) \p_t\p_x^{\a-\tilde{\a}}u_0,\p_x^\a u\r_x}_{B_{16}}\\[4pt]
\underbrace{-A\g\l\p_x^\a\up{div}_x(h_0u+\v\rho u), \p_x^\a\rho\r_x}_{B_{17}}
\underbrace{-A\g\l\p_x^\a\up{div}_x(\rho u_0),\p_x^\a\rho\r_x}_{B_{18}}\underbrace{-A\g\l\p_x^\a\up{div}_x u,\p_x^\a\rho\r_x}_{B_{19}},
\end{multline}
where we consider the decomposition $\L g=\L(\I-\P)g+\P_1g$ in \eqref{Decop-L} and inequality \eqref{Poin inequality}, and $R_0$, $R_1$, $R_2$, $R_3$ are given as in \eqref{L-R0-R3}.

Recalling the definition of $R_0$ in \eqref{L-R0-R3}, we can split $B_{11}$ into the following four terms:
\begin{equation}\label{B11-0}
\begin{aligned}
B_{11}=&\ \underbrace{-\frac{1}{\v}\l\p_x^\a\big[(\I-\P_0)(v\c\n_x g_1)\big],\p_x^\a g\r_{x,v}}_{B_{111}}\underbrace{-\frac{1}{\v}\l\p_x^\a\big[\frac{1}{\M}\up{div}_v(u_0 g_1 \M)\big],\p_x^\a g\r_{x,v}}_{B_{112}}\\
&\ \underbrace{-\frac{1}{\v}\l\p_x^\a\big[\frac{1}{\M}\up{div}_v(u_0 g \M)\big],\p_x^\a g\r_{x,v}}_{B_{113}}\underbrace{-\frac{1}{\v}\l\p_x^\a\big[\frac{1}{\M}\up{div}_v(u m_0 M)\big],\p_x^\a g\r_{x,v}}_{B_{114}}.
\end{aligned}
\end{equation}
For $B_{111}$, by substituting $g_1$ in \eqref{L-R0-R3} and noticing \eqref{self-adjoin of I-P}, we have, for $|\beta|=1$,
\begin{equation*}
\begin{aligned}
B_{111}=&\ -\frac{1}{\v}\l v\c\p_x^{\a+\beta}g_1,\p_x^\a(\I-\P)g\r_{x,v} - \frac{1}{\v}\l v\c\p_x^{\a+\beta}g_1,v\c\p_x^\a b\M\r_{x,v}\\
=&\ -\frac{1}{\v}\l v\c\p_x^{\a+\beta}(v\c u_0 m_0)\M,\p_x^\a(\I-\P)g\r_{x,v}-\frac{1}{\v}\l v\c\p_x^{\a+\beta}(v\c u_0)\M,\p_x^\a(\I-\P)g\r_{x,v}\\
&\ +\frac{1}{\v}\l v\c\p_x^{\a+\beta}(v\c\p_x m_0)\M,\p_x^\a(\I-\P)g\r_{x,v},
\end{aligned}
\end{equation*}
where $(\I-\P_0)g=(\I-\P_0)(\I-\P)g+v\c b\M=(\I-\P)g+v\c b\M$ is used in the second equality.\\
Then, $|B_{111}|$ is bounded by, for $|\beta| = 1$,
\begin{equation*}
\begin{aligned}
|B_{111}| \leq &\ \frac{C}{\v}\|u_0\|_{L^3_x}\|m_0\|_{L^6_x}\|(\I-\P)g\|_{L^2_{x,v}}+\frac{C}{\v}\| \n_x u_0\|_{L^2_x}\|(\I-\P)g\|_{L^2_
{x,v}} + \frac{C}{\v}\|\n_x m_0\|_{L^2_x}\|(\I-\P)g\|_{L^2_{x,v}}\\
&\leq \ \frac{C_0}{2^4\v^2}\|(\I-\P)g\|^2_{H^1_xL^2_v} + C\|(\n_x m_0,\n_x u_0)\|^2_{L^2_x} + \frac{C}{\v}\|u_0\|_{H^1_x}\|\n_x m_0\|_{L^2_x}\|(\I-\P)g\|_{L^2_{x,v}}\\
&\leq \ \frac{c_0}{2^4\v^2}\|(\I-\P)g\|^2_{H^1_xL^2_v}+C\D_{ma}(t)+C\E^{\frac{1}{2}}(t)\D(t), \quad \text{if} \quad |\alpha| = 0,\\[8pt]
|B_{111}| \leq &\ \frac{C}{\v}\sum_{1 \leq |\tilde{\a}|\leq |\a|}C_{\a,\tilde{\a}}\|\p_x^{\tilde{\a}}u_0\|_{L^4_x}\|\p_x^{\a-\tilde{\a}}m_0\|_{L^4_x} \|\p_x^\a(\I-\P)g\|_{L^2_{x,v}}\\
&\ +\frac{C}{\v}\| \p_x^\a u_0\|_{L^2_x}\|\p_x^\a(\I-\P)g\|_{L^2_{x,v}} + \frac{C}{\v}\|\p_x^{\a+\beta}m_0\|_{L^2_x}\|\p_x^\a(\I-\P)g\|_{L^2_{x,v}}\\
\leq& \ \frac{c_0}{2^4\v^2}\|\p_x^\a(\I-\P)g\|^2_{L^2_{x,v}}+C\D_{ma}(t)+C\E^{\frac{1}{2}}(t)\D(t), \quad \text{if} \quad |\alpha| \geq 1,
\end{aligned}
\end{equation*}
where the dissipative property \eqref{Poin inequality} is employed.\\
Hence, we obtain
\begin{equation*}\label{B111}
\begin{aligned}
|B_{111}| \leq &\ \frac{c_0}{2^4\v^2}\left[ \|(\I-\P)g\|^2_{H^1_xL^2_v} + \|\p_x^\a(\I-\P)g\|^2_{L^2_{x,v}}\right] + C\D_{ma}(t) + C\E^{\frac{1}{2}}(t)\D(t).
\end{aligned}
\end{equation*}
For $B_{112}$, by applying the similar argument as $B_{111}$, we also find 
\begin{equation*}\label{B112}
|B_{112}| \lesssim \ \E^{\frac{1}{2}}(t)\D(t).
\end{equation*}
For $B_{113}$, we have
\begin{equation*}\label{B113-0}
\begin{aligned}
B_{113}=&\ -\frac{1}{\v}\l\p_x^\a\big[u_0 \p_v(\I-\P)g-\frac{v}{2}\c u_0(\I-\P)g\big],\p_x^\a(\I-\P)g\r_{x,v}\\
&\ -\frac{1}{\v}\l\p_x^\a(u_0 \p_v \P g-\frac{v}{2}\c u_0\P g),\p_x^\a(\I-\P)g\r_{x,v}\\
&\ -\frac{1}{\v}\l\p_x^\a\big[u_0 \p_v(\I-\P)g-\frac{v}{2}\c u_0(\I-\P)g\big],\p_x^\a\P g\r_{x,v}\\
&\ -\frac{1}{\v}\l\p_x^\a(u_0 \p_v\P g-\frac{v}{2}\c u_0 \P g),\p_x^\a\P g\r_{x,v}\\
\leq&\ \frac{C}{\v}\left(\|u_0\|_{H^{|\a|}_x} + \|u_0\|_{H^2_x}\right) \left[\|(\p_x a,\p_x b)\|_{H^{|\a|-1}_x} + \sum_{0 \leq |\a|\leq 4}\|\p_x^\a(\I-\P)g\|_{\nu}\right] \sum_{0 \leq |\a|\leq 4}\|\p_x^\a(\I-\P)g\|_{\nu}\\
&\ -\frac{1}{\v}\l\p_x^\a(u_0 \p_v\P g-\frac{v}{2}\c u_0 \P g),\p_x^\a\P g\r_{x,v}\\
\leq&\ C\E^{\frac{1}{2}}(t)\D(t) \underbrace{+\frac{1}{\v}\l\p_x^\a(u_0 a),\p_x^\a(b-\v u)\r_x+ \l\p_x^\a(u_0 a),\p_x^\a u\r_x}_{B_{1131}},
\end{aligned}
\end{equation*}
and furthermore, notice the term $-au_0$ from $R_3$ in \eqref{L-R0-R3}, we have the following estimate for $B_{1131}$,   
\begin{equation*}
\begin{aligned}
|B_{1131}-\l u_0 a, u\r_x| \leq&\ \frac{1}{\v}\|u_0\|_{L^3_x}\|a\|_{L^6_x}\|b-\v u\|_{L^2_x}\\
\lesssim&\ \frac{1}{\v}\|u_0\|_{H^1_x}\|\n_x a\|_{L^2_x}\|b-\v u\|_{L^2_x}\\
\lesssim&\ \E^{\frac{1}{2}}(t)\D(t), \quad \text{if} \quad |\alpha| = 0 \\[8pt]
|B_{1131}| \leq&\ \frac{1}{\v}\|\p_x^\a u_0\|_{L^3_x}\|a\|_{L^6_x}\|\p_x^\a(b-\v u)\|_{L^2_x}+\frac{1}{\v}\|u_0\|_{L^\infty_x}\|\p_x^\a a\|_{L^2_x}\|\p_x^\a(b-\v u)\|_{L^2_x}\\
&\ +\frac{C}{\v}\sum_{1\leq |\tilde{\a}| \leq |\a|}\|\p_x^{\tilde{\a}}u_0\|_{L^4_x}\|\p_x^{\a-\tilde{\a}}a\|_{L^4_x}\|\p_x^\a(b-\v u)\|_{L^2_x}+\|\p_x^\a u_0\|_{L^3_x}\|a\|_{L^6_x}\|\p_x^\a u\|_{L^2_x}\\
&\ +\| u_0\|_{L^\infty_x}\|\p_x^\a a\|_{L^2_x}\|\p_x^\a u\|_{L^2_x}+C\sum_{1\leq|\tilde{\a}|\leq |\a|}\|\p_x^{\tilde{\a}}u_0\|_{L^4_x}\|\p_x^{\a-\tilde{\a}}a\|_{L^4_x}\|\p_x^\a u\|_{L^2_x}\\
\lesssim &\ \E^\frac{1}{2}(t)\D(t), \quad \text{if} \quad |\alpha| \geq 1.
\end{aligned}
\end{equation*}
Hence, we have
\begin{equation*}\label{B113}
|B_{113}| \lesssim \ \E^\frac{1}{2}(t)\D(t).
\end{equation*}
By applying the similar argument as $B_{113}$, we also find
\begin{equation*}\label{B114}
|B_{114}| \lesssim \ \E^\frac{1}{2}(t)\D(t).
\end{equation*}
By inserting all the estimates of $B_{111}$, $B_{112}$, $B_{113}$, $B_{114}$ into \eqref{B11-0}, we obtain that
\begin{equation}\label{B11}
\begin{aligned}
|B_{11}| \leq\frac{c_0}{2^4\v^2} \left[\|(\I-\P)g\|_{H^1_x L^2_v}^2+\|\p_x^\a(\I-\P)g\|^2_{L^2_{x,v}}\right] + C\left(\D_{ma}(t)+\E^{\frac{1}{2}}(t)\D(t)\right).
\end{aligned}
\end{equation}

For $B_{12}$, by recalling the definition of $R_1$ in \eqref{L-R0-R3}, we can split it into
\begin{equation}\label{B12-0}
B_{12}=\ \underbrace{-\l\p_x^\a \p_t g_1, \p_x^\a g\r_{x,v}}_{B_{121}} \underbrace{-\l\p _x^\a\big[\frac{1}{\M}\up{div}_v(ug_1\M+ug\M)\big],\p_x^\a g\r_{x,v}}_{B_{122}}.
\end{equation}
Furthermore, by substituting $g_1$, $B_{121}$ is divided into the following two parts:
\begin{equation}\label{B121-0}
B_{121}= \underbrace{-\l\p_t \p_x^\a(m_0u_0),\p^\a_x b\r_x}_{B_{1211}}\underbrace{-\l \p_t\p_x^\a u_0+\p_t\p_x^{\a+1} m_0,\p^\a_x b\r_x}_{B_{1212}}.
\end{equation}
For $B_{1211}$,
\begin{equation*}\label{B1211-1}
\begin{aligned}
|B_{1211}| =&\ \left|-\l\p_tm_0 u_0+m_0\p_t u_0,b-\v u\r_x-\v\l\p_tm_0 u_0+m_0\p_t u_0, u\r_x \right|\\[4pt]
\leq&\ \left(\|\p_tm_0\|_{L^2_x}\|u_0\|_{L^\infty_x}+\|\p_t u_0\|_{L^2_x}\| m_0\|_{L^\infty_x}\right) \|b-\v u\|_{L^2_x}\\[4pt]
&\qquad \qquad \qquad \qquad+\v\left(\|\p_tm_0\|_{L^2_x}\|u_0\|_{L^3_x}+\|\p_t u_0\|_{L^2_x}\| m_0\|_{L^3_x}\right)\|u\|_{L^6_x}\\[4pt]
\lesssim&\ \big(\|\p_tm_0\|\|u_0\|_{H^2_x}+\|\p_t u_0\|_{L^2_x}\| m_0\|_{H^2_x}\big)\|b-\v u\|_{L^2_x}\\[4pt]
&\qquad \qquad \qquad \qquad+\v\big(\|\p_tm_0\|_{L^2_x}\|u_0\|_{H^1_x}+\|\p_t u_0\|_{L^2_x}\| m_0\|_{H^1_x}\big)\|\p_x u\|_{L^2_x}\\
\lesssim&\ \E^{\frac{1}{2}}(t)\D(t), \quad \text{if} \quad |\alpha| = 0, \\[8pt]
|B_{1211}|=&\ \left|-\l\p_t \p_x^\a(m_0u_0),\p^\a_x(b-\v u)\r_x-\v\l\p_t \p_x^\a(m_0u_0),\p^\a_x u\r_x \right|\\[4pt]
\lesssim& \  \left( \|\p_tu_0\|_{H^{|\a|}_x}\|m_0\|_{H^{|\a|}_x}+\|u_0\|_{H^{|\a|}_x}\|\p_t m_0\|_{H^{|\a|}_x} \right)\|\p_x^\a(b-\v u)\|_{L^2_x}\\[4pt]
&\ +\v\left(\|\p_t u_0\|_{H^{|\a|}_x}\| m_0\|_{H^{|\a|}_x}\|\p_x u\|_{H^{|\a|-1}_x}  +\|\p_t m_0\|_{H^{|\a|}_x}\| u_0\|_{H^{|\a|}_x}\|\p_x u\|_{H^{|\a|-1}_x}\right)\\[4pt]
\lesssim&\ \E^{\frac{1}{2}}(t)\D(t), \quad \text{if} \quad |\alpha|\geq 1,
\end{aligned}
\end{equation*}
which then implies that
\begin{equation*}\label{B1211}
B_{1211}\lesssim \ \E^{\frac{1}{2}}(t)\D(t),
\end{equation*}
 while for $B_{1212}$, we have
\begin{equation*}
\begin{aligned}
|B_{1212}|=&\ \left| -\l\p_t u_0+\p_t\n_x m_0,b-\v u\r_x-\v\l\p_t u_0+\p_t\n_x m_0, u\r_x \right|\\
\leq&\ \frac{1}{2^4\v^2}\|b-\v u\|^2_{L^2_x}+C\|\p_t u_0+\p_t\n_x m_0\|^2_{L^2_x} + \v\|\p_t u_0+\p_t\n_x m_0\|_{\dot{H}^{-1}_x}\| u\|_{\dot{H}^1_x}\\
\leq&\ \frac{1}{2^4\v^2}\|b-\v u\|^2_{L^2_x}+C\|(\p_t u_0,\p_t\n_x m_0)\|^2_{L^2_x}+\frac{\mu}{2^4}\|\n_x u\|_{L^2_x}^2\\
\leq&\ C\D_{ma}(t)+\frac{1}{2^4\v^2}\|b-\v u\|^2_{L^2_x}+\frac{\mu}{2^4}\|\n_x u\|^2_{L^2_x}, \quad \text{if} \quad |\alpha| = 0,\\[8pt]
|B_{1212}| =&\  \left|-\l\p_t \p_x^\a u_0+\p_t\p_x^{\a+\beta}m_0,\p_x^\a(b-\v u)\r_x-\v\l\p_t\p_x^\a u_0+\p_t\p_x^{\a+\beta}m_0, \p_x^\a u\r_x \right|\\[4pt]
\leq&\ \frac{1}{2^4\v^2}\|\p_x^\a(b-\v u)\|^2_{L^2_x}+C\|(\p_t\p_x^\a u_0,\p_t\p_x^{\a+\beta}m_0)\|^2_{L^2_x}+\frac{\mu}{2^4}\|\p_x^{\a}u\|^2_{L^2_x}\\[4pt]
\leq&\ C\D_{ma}(t)+\frac{1}{2^4\v^2}\|\p_x^\a(b-\v u)\|^2_{L^2_x}+\frac{\mu}{2^4}\|\p_x^{\a}u\|^2_{L^2_x}, \quad \text{if} \quad |\alpha| \geq 1,
\end{aligned}
\end{equation*}

which leads to
\begin{equation*}\label{B1212}
|B_{1212}| \leq\ C\D_{ma}(t)+\frac{1}{2^4\v^2}\|\p_x^\a(b-\v u)\|^2_{L^2_x}+\frac{\mu}{2^4}\|(\n_x u,\p_x^{\a}u)\|^2_{L^2_x}.
\end{equation*}
By substituting the estimates of $B_{1211}$ and $B_{1212}$ into \eqref{B121-0}, we obtain
\begin{equation}\label{B121}
|B_{121}| \leq \ C\D_{ma}(t)+\frac{1}{2^4\v^2}\|\n_x^\a(b-\v u)\|^2_{L^2_x}+\frac{\mu}{2^4}\|(\n_x u,\p_x^{\a}u)\|^2_{L^2_x}+C\E^{\frac{1}{2}}(t)\D(t).
\end{equation}
We can also apply the similar argument as for $B_{113}$, and find $B_{122}$ to be bounded by
\begin{equation}\label{B122}
\begin{aligned}
|B_{122}| =\ \left| -\l\p _x^\a\big[\frac{1}{\M}\up{div}_v(ug_1\M+ug\M)\big],\p_x^\a g\r_{x,v} \right|
\lesssim \E^{\frac{1}{2}}(t)\D(t).
\end{aligned}
\end{equation}
Combining \eqref{B12-0}, \eqref{B121}, and \eqref{B122}, we have
\begin{equation}\label{B12}
|B_{12}| \leq\ \frac{1}{2^4\v^2}\|\p_x^\a(b-\v u)\|^2_{L^2_x}+\frac{\mu}{2^4}\|(\n_x u,\p_x^{\a}u)\|^2_{L^2_x}+ C \left(\D_{ma}(t)+\E^{\frac{1}{2}}(t)\D(t)\right).
\end{equation}

Recalling the definition of $R_2$ in \eqref{L-R0-R3}, we can divided $B_{13}$ as follows: for $|\beta| = 1$,
\begin{equation*}
\begin{aligned}
B_{13}=&\ \frac{A}{\v}\l\p_x^{\a+\beta}\big[(1+h_0)^\g\big]-\p_x^{\a+\beta}\big[(1+h_0+\v\rho)^\g\big],\p_x^\a u\r_x\\[4pt]
=&\frac{A}{\v}\l\p_x^{\a}\big[(1+h_0+\v\rho)^\g\big]-\p_x^{\a}\big[(1+h_0)^\g\big],\p_x^\a\up{div}_x u\r_x\\[4pt]
=&\frac{A}{\v}\l B_{\g}^{\a},\p_x^\a\up{div}_x u\r+A\g\l\big[(1+h_0+\v\rho)^{\g-1}-(1+h_0)^{\g-1}\big]\p^\a_x\rho,\p_x^\a\up{div}_x u\r_x\\[4pt]
&+A\g\l\big[(1+h_0)^{\g-1}-1\big]\p^\a_x\rho,\p_x^\a\up{div}_x u\r_x - B_{19},
\end{aligned}
\end{equation*}
where $B_{\g}^{\a}$ is defined as in \eqref{B-la} and $B_{19}$ has been given in \eqref{Estimate of nabla-g-u-rho-0}.\\
By applying the Lemma \ref{estimate of Presure}, we can have the following estimate of $B_{13}$, for $|\beta| = 1$,
\begin{equation*}
\begin{aligned}
|B_{13} + B_{19}|  \lesssim &\ \frac{1}{\v}\|B_{\g}^0\|_{L^2_x}\|\n_x u\|_{L^2_x}+\|(1+h_0+\v\rho)^{\g-1}-(1+h_0)^{\g-1}\|_{L^{3}_x}\|\rho\|_{L^6_x}\|\n_x u\|_{L^2_x}\\
&\ +\|(1+h_0)^{\g-1}-1\|_{L^3_x}\|\rho\|_{L^6_x}\|\n_x u\|_{L^2_x}\\[4pt]
\lesssim &\ \|\rho\|_{H^1_x}\|\n_x\rho\|_{L^2_x}\|\n_x u\|_{L^2_x}+\v\|\rho\|_{H^1_x}\|\n_x\rho\|_{L^2_x}\|\n_x u\|_{L^2_x}+\|h_0\|_{H^1_x}\|\n_x\rho\|_{L^2_x}\|\n_x u\|_{L^2_x} \\[4pt]
\lesssim&\ \E^{\frac{1}{2}}(t)\D(t), \quad \text{if} \quad |\alpha| = 0\\[8pt]
|B_{13} + B_{19}| \lesssim &\ \frac{1}{\v}\|B_{\g}^\a\|_{L^2_x}\|\p_x^{\a+\beta} u\|_{L^2_x}+\|(1+h_0+\v\rho)^{\g-1}-(1+h_0)^{\g-1}\|_{L^\infty_x}\|\p_x^\a\rho\|_{L^2_x}\|\p_x^{\a+\beta} u\|_{L^2_x}\\[4pt]
&\ +\|(1+h_0)^{\g-1}-1\|_{L^\infty_x}\|\p_x^\a\rho\|_{L^2_x}\|\p_x^{\a+\beta} u\|_{L^2_x}\\[4pt]
\lesssim &\ \|\rho\|_{H^2_x}\|(\n_x\rho,\n_x h_0)\|_{H^3_x}\|\p_x^{\a+\beta} u\|_{L^2_x}+\v\|\rho\|_{H^2_x}\|\n_x\rho\|_{L^2_x}\|\p_x^{\a+\beta} u\|_{L^2_x}\\[4pt]
&\ +\|h_0\|_{H^2_x}\|\n_x\rho\|_{L^2_x}\|\p_x^{\a+\beta} u\|_{L^2_x}\\[4pt]
\lesssim &\ \E^{\frac{1}{2}}(t)\D(t), \quad \text{if} \quad |\alpha| \geq 1.
\end{aligned}
\end{equation*}
Therefore, we have
\begin{equation}\label{B13-B19}
|B_{13}+B_{19}| \lesssim \ \E^{\frac{1}{2}}(t)\D(t).
\end{equation}
By further noticing the definition of $R_3$ in \eqref{L-R0-R3} and considering \eqref{Key inequality},
we can apply the similar argument of estimating $B_{11}$ into $B_{14}$, $B_{15}$, $B_{17}$, and $B_{18}$ such that
\begin{equation}\label{B14-B15-B17-B18}
|B_{14}| + |B_{15}| + |B_{17}| + |B_{18}| \lesssim \ \E^{\frac{1}{2}}(t)\D(t),
\end{equation}

On the other hand, for rest term $B_{16}$, we have
\begin{equation*}
B_{16}= \ -\sum_{1\leq |\tilde{\a}| \leq |\a|}C_{\a, \tilde{\a}}\l\p_x^{\tilde{\a}}h_0 \, \p_t\p_x^{\a-\tilde{\a}}u_0, \p_x^\a u_0\r_x -\v\sum_{1 \leq|\tilde{\a}| \leq |\a|} C_{\a,\tilde{\a}}\l\p_x^{\tilde{\a}}\rho \, \p_t\p_x^{\a-\tilde{\a}}u_0,\p_x^\a u_0\r_x,
\end{equation*}
from which, we find that $B_{16}$ only exists when $|\a|\geq 1$, and can be bounded by
\begin{equation*}
\begin{aligned}
|B_{16}| \lesssim&\ \|\p_x^\a h_0\|_{L^2_x}\|\p_t u_0\|_{H^2_x}\|\p_x^\a u\|_{L^2_x}+\sum_{1\leq |\tilde{\a}| \leq |\a|-1}\|\p_x^{\tilde{\a}}h_0\|_{L^4_x}\|\p_t\p_x^{\a-\tilde{\a}}u_0\|_{L^4_x}\|\p_x^\a u_0\|_{L^2_x}\\
\lesssim&\ \v\|\p_x^\a \rho\|_{L^2_x} \|\p_t u_0\|_{H^2_x}\|\p_x^\a u\|_{L^2_x}+ C\v\sum_{1\leq|\tilde{\a}| \leq |\a|-1}\|\p_x^{\tilde{\a}}\rho\|_{L^4_x}\|\p_t\p_x^{\a-\tilde{\a}}u_0\|_{L^4_x}\|\p_x^\a u_0\|_{L^2_x}\\
\lesssim&\ \E^{\frac{1}{2}}(t)\D(t).
\end{aligned}
\end{equation*}
Therefore, we deduce that
\begin{equation}\label{B16}
|B_{16}| \lesssim \ \E^{\frac{1}{2}}(t)\D(t).
\end{equation}

Putting all the estimates \eqref{B11}, \eqref{B12}, \eqref{B13-B19}, \eqref{B14-B15-B17-B18} and \eqref{B16} into \eqref{Estimate of nabla-g-u-rho-0}, the proof can be finally completed by summing up for all $0 \leq |\a| \leq 4$.

\end{proof}

To estimate $\p_t u$, we give the following lemma about $\E_{mi,K,2}(t)$ and $\D_{mi,K,2}(t)$ defined in \eqref{part of energy functionals} and \eqref{part of dissipation functionals}.

\begin{lemma}\label{estimate-kinetic-Mi-2}
Under the assumptions of Theorem \ref{Main-Limits} and \eqref{A priori assumption}, then, for any $0\leq t\leq T$,
\begin{equation}\label{Step two}
\begin{aligned}
\frac{1}{2}\frac{d}{dt}\E_{mi,K,2}(t)+C_2\D_{mi,K,2}(t)\leq \tilde{C}_2\left(\D_{mi,K,1}(t)+\D_{mi,K,3}(t)+\E^{\frac{1}{2}}(t)\D(t)\right),
\end{aligned}
\end{equation}
where the constants $\tilde{C}_2,\,C_2> 0$ are independent of $\v$, $\delta$, $\delta_1$ and $T$.
\end{lemma}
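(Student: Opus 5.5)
We plan to test the momentum equation $\eqref{Remainder equations}_2$ against $\p_t$ of the solution at each derivative level. Concretely, for $0\le|\a|\le 3$ we apply $\p_x^\a$ to $\eqref{Remainder equations}_2$, multiply by $\p_x^\a\p_t u$ and integrate in $x$. The Lam\'e term integrates by parts to
\[
\frac12\frac{d}{dt}\Big(\mu\|\p_x^\a\n_x u\|_{L^2_x}^2+(\mu+\lambda)\|\p_x^\a\up{div}_x u\|_{L^2_x}^2\Big),
\]
and summing over $|\a|\le3$ produces $\frac12\frac{d}{dt}\E_{mi,K,2}(t)$. The inertial term splits as
\[
\l(1+h_0+\v\rho)\p_x^\a\p_t u,\p_x^\a\p_t u\r_x+\sum_{|\tilde\a|\ge1}C_{\a,\tilde\a}\l\p_x^{\tilde\a}(h_0+\v\rho)\,\p_x^{\a-\tilde\a}\p_t u,\p_x^\a\p_t u\r_x .
\]
By the a priori assumption \eqref{A priori assumption} and \eqref{m0u0h0}, the weight $1+h_0+\v\rho$ is bounded below by $1/2$, so the first piece gives $\tfrac12\|\p_x^\a\p_t u\|^2_{L^2_x}$, and hence $C_2\D_{mi,K,2}$ after summation. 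The commutator piece is cubic. We bound it by Sobolev/H\"older ($L^\infty$ or $L^4$ on the lower-order factor) by $\|(\n_x h_0,\n_x\rho)\|_{H^3_x}\|\p_t u\|^2_{H^3_x}\lesssim\E^{1/2}\D$; the $\v\n_x\rho$ part uses $\D_{mi,K,3}$ and $\E^{1/2}$.

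The remaining terms go to the right-hand side and are handled by Cauchy--Schwarz with $\tfrac18\|\p_x^\a\p_t u\|^2$ absorbed.
\begin{itemize}
\item \emph{Linear singular term.} We write $\frac1\v\p_x^\a(b-\v u)$ and bound it by $\frac{C}{\v^2}\|b-\v u\|^2_{H^3_x}\le C\D_{mi,K,1}$.
\item \emph{Advection terms.} The terms $(1+h_0+\v\rho)(u_0\c\n_x u+u\c\n_x u_0)$ are bilinear in small quantities times $\n_x u$ or $u$. The dangerous case is $u\c\n_x u_0$ at $|\a|=0$, where $u$ carries no derivative. We treat it via $\|u\|_{L^6}\|\n_x u_0\|_{L^3}\lesssim\|\n_x u\|_{L^2}\E^{1/2}$, which gives $\E^{1/2}\D$.
\item \emph{$R_3$.} This is treated like $B_{14}$. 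The term $u_0a$ is bounded using $\|a\|_{L^6}\lesssim\|\n_x a\|_{L^2}$ with $\D_{mi,F}\subset\D$; the other terms, such as $\rho\p_t u_0$ and $um_0$, are handled similarly.
\end{itemize}

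The main obstacle is the pressure term $\frac1\v\p_x^\a R_2$. We decompose it exactly as in the treatment of $B_{13}$:
\[
\frac1\v\p_x^{\a}\n_x\big[(1+h_0+\v\rho)^\g-(1+h_0)^\g\big]=\frac1\v\n_x B^{\a}_{\g}+\g\,\n_x\big[(1+h_0+\v\rho)^{\g-1}\p_x^\a\rho\big].
\]
The second piece is controlled by $\|\n_x\rho\|_{H^3_x}$ plus products, hence by $\D_{mi,K,3}+\E^{1/2}\D$ after Cauchy--Schwarz. This is why $\D_{mi,K,3}$ appears on the right of \eqref{Step two}. For the first piece we will not move the derivative onto $\p_t u$, since that would need $\p_t\n_x u$, which is not available. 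Instead we use Lemma \ref{estimate of Presure} in the form $\frac1\v\|\n_x B^\a_\g\|_{L^2}$. Expanding $\n_x B^\a_\g = B^{\a+\beta}_\g+\g\v\,\n_x[(1+h_0+\v\rho)^{\g-1}]\p_x^\a\rho$, the case $|\a+\beta|\le4$ is covered by \eqref{estimate of nable P-1} with $p=2$, giving $\|\rho\|_{H^3_x}\|(\n_x h_0,\n_x\rho)\|_{H^3_x}$. This is $\lesssim\E^{1/2}\D^{1/2}$, and pairing it with $\|\p_t u\|_{H^3_x}$ gives $\E^{1/2}\D$.

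Collecting all estimates, summing over $|\a|\le3$ and absorbing the $\frac18\|\p_t u\|^2_{H^3_x}$ contributions yields \eqref{Step two}.
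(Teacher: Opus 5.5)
Your proposal is correct and follows the paper's proof essentially step by step. You test the differentiated momentum equation with $\p_t\p_x^\a u$, absorb $\frac1\v(b-\v u)$ into $\D_{mi,K,1}$, and split the pressure into $\frac1\v B^{\a+\beta}_\g$ (handled by Lemma \ref{estimate of Presure}) plus $\g(1+h_0+\v\rho)^{\g-1}\p_x^{\a+\beta}\rho$ (the source of $\D_{mi,K,3}$), which is exactly the paper's $B_{24}$ estimate. Your intermediate identity has a sign slip: it should read $\n_x B^\a_\g=B^{\a+\beta}_\g-\g\v\,\n_x[(1+h_0+\v\rho)^{\g-1}]\p_x^\a\rho$, and with the correct sign this piece cancels the matching part of your second term, landing precisely on the paper's decomposition. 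Your restriction to $0\leq|\a|\leq 3$ is the right range for $\E_{mi,K,2}$ and $\D_{mi,K,2}$. The paper's stated range $|\a|\leq 4$ would require $\p_x^5\rho$ and should be read as $|\a|\leq 3$.
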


\begin{proof}
By applying $\p_x^\a$ to both-hand-sides of the remainder system $\eqref{Remainder equations}_2$ with $0\leq|\a|\leq 4$, multiplying it by $\p_t\p_x^\a u$ and integrating over $\R^3$, we have, for $|\beta|=1$,

\begin{equation}\label{Estimate of partial-nabla-u-1}
\begin{aligned}
&\frac{1}{2}\frac{d}{dt} \left[\mu\|\p_x^{\a+\beta}u\|^2_{L^2_x}+(\mu+\lambda)\|\p_x^\a\up{div}_x u\|^2_{L^2_x}\right] + \|\sqrt{1+h_0+\v\rho} \, \p_t\p_x^\a u\|^2_{L^2_x}\\
&=\underbrace{-\frac{1}{\v}\l\p_x^\a(b-\v u),\p_t\p_x^\a u\r_x}_{B_{21}}
\underbrace{-\sum_{1 \leq |\tilde{\a}| \leq |\a|}C_{\a,\tilde{\a}}\l(\p_x^{\tilde{\a}}h+\v\p_x^{\tilde{\a}}\rho) \, \p_t\p_x^{\a-\tilde{\a}}u, \, \p_t\p_x^\a u\r_x}_{B_{22}}\\
&\quad \underbrace{-\l\p_x^\a\left[(1+h_0+\v\rho)(u\c\n_x u_0 + u_0\c\n_x u)\right],\p_t\p_x^\a u\r_x}_{B_{23}}+\underbrace{\frac{1}{\v}\l\p_x^\a R_2,\p_t\p_x^\a u\r_x}_{B_{24}}+\underbrace{\l\p_x^\a R_3,\p_t\p_x^\a u\r_x}_{B_{25}}.
\end{aligned}
\end{equation}

For $B_{21}$, we have
\begin{equation}\label{B21}
|B_{21}| \leq \frac{1}{4^3}\|\p_t\p_x^\a u\|^2_{L^2_x}+\frac{C}{\v^2}\|\p_x^\a(b-\v u)\|^2_{L^2_x} \leq \frac{1}{4^3}\|\p_t\p_x^\a u\|^2_{L^2_x}+C\D_{mi,K,1}(t),
\end{equation}
where $\D_{mi,K,1}(t)$ is defined in \eqref{part of energy functionals}.

For $B_{22}$,
\begin{equation}\label{B22}
\begin{aligned}
|B_{22}| \lesssim & \sum_{1 \leq |\tilde{\a}| \leq |\a|} \left(\|\p_x^{\tilde{\a}}h_0\|_{L^4_x}+\v\|\p_x^{\tilde{\a}}\rho\|_{L^4_x}\right) \|\p_t\p_x^{\a-\tilde{\a}}u\|_{L^4_x}\|\p_t\p_x^\a u\|_{L^2_x}\\
\lesssim & \sum_{1\leq|\tilde{\a}| \leq |\a|}\left(\|\p_x^{\tilde{\a}}h_0\|_{H^1_x}+\v\|\p_x^{\tilde{\a}}\rho\|_{H^1_x}\right) \|\p_t\p_x^{\a-\tilde{\a}}u\|_{L^4_x}\|\p_t\p_x^\a u\|_{L^2_x}\\
\lesssim & \ \E^{\frac{1}{2}}(t)\D(t),
\end{aligned}
\end{equation}
where the H$\ddot{\up{o}}$lder inequality and Sobolev embedding are used.

We obtain the estimate for $B_{23}$ and $B_{25}$ by using the similar argument as for $B_{22}$,,
\begin{equation}\label{B23-B25}
\begin{aligned}
|B_{23}|+|B_{25}| \lesssim \ \E^{\frac{1}{2}}(t)\D(t)
\end{aligned}
\end{equation}

Recalling the definition of $R_2$ in \eqref{L-R0-R3}, and applying \eqref{Key inequality} and Lemma \ref{estimate of Presure}, we can estimate $B_{24}$, for $|\beta|=1$,
\begin{equation}\label{B24}
\begin{aligned}
|B_{24}| =& -\frac{A}{\v}\l\p_x^{\a+\beta}\big[(1+h_0+\v\rho)^\g\big]-\p_x^{\a+\beta}\big[(1+h)^\g\big],\p_t\p_x^\a u\r_x\\[4pt]
=&-\frac{A}{\v}\l\p_x^{\a+1}\big[(1+h_0+\v\rho)^\g\big]-\p_x^{\a+1}\big[(1+h_0)^\g\big]-\g\v (1+h_0+\v\rho)^{\g-1}\p_x^{\a+\beta}\rho,\p_t\p_x^\a u\r_x\\[4pt]
&-A\g\l\big[(1+h_0+\v\rho)^{\g-1}-(1+h_0)^{\g-1}\big]\p^{\a+\beta}_x\rho,\p_t\p_x^\a u\r_x\\[4pt]
&-A\g\l\big[(1+h_0)^{\g-1}-1\big]\p^{\a+\beta}_x\rho,\p_t\p_x^\a u\r_x-A\g\l\p^{\a+\beta}_x\rho,\p_t\p_x^\a u\r_x\\[4pt]
\leq&\frac{1}{4^3}\|\p_t\p_x^\a u\|^2_{L^2_x}+C\|\p_x^{\a+\beta}\rho\|^2_{L^2_x}+C\E^{\frac{1}{2}}(t)\D(t)\\[4pt]
\leq&\frac{1}{4^3}\|\p_t\p_x^\a u\|^2_{L^2_x} + C\big(\D_{mi,K,3}(t) + \E^{\frac{1}{2}}(t)\D(t)\big),
\end{aligned}
\end{equation}
where $\D_{mi,K,3}(t)$ is defined in \eqref{part of energy functionals}.

Combining with \eqref{Estimate of partial-nabla-u-1}, \eqref{B21}, \eqref{B22}, \eqref{B23-B25}, and \eqref{B24} as well as considering \eqref{estimate of rho0-rho-k}, we have, for $0\leq|\a|\leq 4$ and $|\beta|=1$,
\begin{multline}\label{Estimate of partial-nabla-u}
\frac{1}{2}\frac{d}{dt}\big[\mu\|\p_x^{\a+\beta}u\|^2_{L^2_x}+(\mu+\lambda)\|\p_x^\a\up{div}_x u\|^2_{L^2_x}\big]+\frac{1}{4}\|\p_t\p_x^\a u\|^2_{L^2_x}\\
\lesssim \ \left(\D_{mi,K,1}(t)+\D_{mi,K,3}(t)+\E^{\frac{1}{2}}(t)\D(t)\right).
\end{multline}
Summing up the above inequality with $0 \leq |\a| \leq 4$, the proof can be completed.
\end{proof}

To close the energy estimate of the remainder system \eqref{Remainder equations}-\eqref{NSS-3}, we also need to study the dissipation of $\rho$, i.e., the estimate of $\E_{mi,K,3}(t)$ and $\D_{mi,K,3}(t)$.

\begin{lemma}\label{estimate-kinetic-Mi-3}
Under the assumptions of Theorem \ref{Main-Limits} and \eqref{A priori assumption}, then, for any $0\leq t\leq T$,
\begin{equation}\label{Step three}
\frac{1}{2}\frac{d}{dt}\E_{mi,K,3}(t)+C_3\D_{mi,K,3}(t) \leq \tilde{C}_3\left( \D_{mi,K,1}(t)+\E^{\frac{1}{2}}(t)\D(t)\right),
\end{equation}
where the constants $\tilde{C}_3,\,C_3 > 0$ are independent of $\v$, $\delta$, $\delta_1$ and $T$.
\end{lemma}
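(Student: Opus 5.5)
This is the standard compressible Navier–Stokes trick for recovering dissipation of the density. We pair the momentum equation with $\nabla_x\rho$ and use the continuity equation to turn the time derivative into a divergence term. Concretely, for $1\le|\a|\le 4$ and $|\beta_1|=1$, we apply $\p_x^{\a-\beta_1}$ to the momentum equation $\eqref{Remainder equations}_2$ after dividing by $(1+h_0+\v\rho)$. Equivalently, we multiply it by the weight $(1+h_0+\v\rho)$, matching the weight $(1+h_0+\v\rho)^2$ appearing in $\E_{mi,K,3}$. We then take the inner product with $\p_x^{\a}\rho$, i.e.\ with $\p_x^{\a-\beta_1}\n_x\rho$ after summing over directions.

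The pressure term is treated as in Lemma \ref{estimate-kinetic-Mi-1}. We write $\frac{1}{\v}R_2=-A\g(1+h_0+\v\rho)^{\g-1}\n_x\rho$ plus the correction built from $B^\a_\g$. The first piece, after expanding $(1+h_0+\v\rho)^{\g-1}=1+[(1+h_0)^{\g-1}-1]+[(1+h_0+\v\rho)^{\g-1}-(1+h_0)^{\g-1}]$, produces the good term $A\g\|\p_x^\a\rho\|^2_{L^2_x}$. The remainders are bounded by $\E^{1/2}\D$ via Lemma \ref{estimate of Presure} and \eqref{estimate of 1+h0+rho}.

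The viscous term $L u$ is where the precise weight in $\E_{mi,K,3}$ enters. As in the classical argument, we combine $\l \p_x^{\a-\beta_1}\n_x\up{div}_x u,\p_x^\a\rho\r$ with the $\rho$-equation $\p_t\rho=-\up{div}_xu-\dots$ tested against $\p_x^\a\rho$. The $(2\mu+\lambda)$ part then gives $\frac{2\mu+\lambda}{2}\frac{d}{dt}\|\p_x^\a\rho\|^2$ up to nonlinear terms; this explains the normalisation $\frac{2}{2\mu+\lambda}$ in $\E_{mi,K,3}$. The curl part $\mu(\Delta u-\n\up{div}u)$ is orthogonal to gradients, up to commutators with the weights.

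The time-derivative term is handled by moving $\p_t$ off $u$:
\[
\l(1+h_0+\v\rho)^2\p_t\p_x^{\a-\beta_1}u,\p_x^\a\rho\r_x=\frac{d}{dt}\l(1+h_0+\v\rho)^2\p_x^{\a-\beta_1}u,\p_x^\a\rho\r_x-\l \p_x^{\a-\beta_1}u,\p_t[(1+h_0+\v\rho)^2\p_x^\a\rho]\r_x .
\]
In the last term we substitute $\p_t\rho$ from $\eqref{Remainder equations}_3$, integrate by parts once in $x$ and get $\|\p_x^{\a-\beta_1}\up{div}_xu\|^2$, which is bounded by $\D_{mi,K,1}$. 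The terms carrying $\p_t h_0$ or $\v\p_t\rho$ are cubic and give $\E^{1/2}\D$ or $\D_{ma}$-type bounds; we absorb the latter by using $\|h_0\|$ small together with $\D$.

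The drag term $\frac{1}{\v}(b-\v u)$ is bounded by Cauchy--Schwarz:
\[
\frac{1}{\v}\|\p_x^{\a-\beta_1}(b-\v u)\|\,\|\p_x^\a\rho\|\le \eta\|\p_x^\a\rho\|^2+C_\eta\D_{mi,K,1}.
\]
The convective terms and $R_3$ are bounded by $\E^{1/2}\D$ using H\"older and Sobolev, as for $B_{14}$--$B_{18}$. Any terms linear in $u$ times $h_0$-derivatives without smallness are controlled with $\|h_0\|_{H^5}\lesssim\delta^{1/2}$, so they also fall into $\E^{1/2}\D$.

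The expected main obstacle is the pressure correction at top order $|\a|=4$. There the needed bound on $\frac{1}{\v}B^{\a}_\g$ in $L^2$ uses \eqref{estimate of nable P-1} with $\|\n_x\rho\|_{H^3}$, and this must not exceed $\D_{mi,K,3}$ times $\E^{1/2}$; we expect the stated lemma gives exactly this. A further difficulty is keeping the weights consistent so that commutator terms involve only $\n_x h_0$ and $\v\n_x\rho$. Summing over $1\le|\a|\le 4$ and taking $\eta$ small then gives \eqref{Step three}.
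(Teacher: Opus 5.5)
Your proposal is correct and is essentially the paper's argument. The correspondence is as follows: testing the divided momentum equation, with weight $(1+h_0+\v\rho)^2$, against $\p_x^\a\rho$, your use of the continuity equation to turn the $(2\mu+\lambda)\n_x\up{div}_x u$ part into $\frac{d}{dt}\|\p_x^\a\rho\|_{L^2_x}^2$ is exactly the paper's cancellation between $B_{31}$ and the viscous term $B_{332}$. Likewise, your time-derivative identity, the pressure splitting via $B^\a_\g$ and Lemma \ref{estimate of Presure}, and Cauchy--Schwarz on the drag correspond to $B_{32}$ and $B_{34}$, the pressure piece of $B_{33}$, and $B_{333}$. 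The only differences are cosmetic: you expand the pressure weight around $1$ instead of invoking the lower bound $1+h_0+\v\rho\geq\frac14$ directly, and the terms you call ``$\D_{ma}$-type'' are in fact all cubic and fall into $\E^{\frac12}(t)\D(t)$, so no bare $\D_{ma}$ appears, consistent with the statement.
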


\begin{proof}
We start with the following calculation: for $1 \leq |\a| \leq 4$ and $|\beta|=1$,
\begin{equation}\label{B3-0}
\begin{aligned}
&\frac{d}{dt}\int \p_x^\a\rho\c\left[\frac{\p_x^\a \rho}{2} + \frac{(1+h_0+\v\rho)^2}{2\mu+\lambda}\p_x^{\a-\beta}u \right] dx\\[4pt]
=&\underbrace{\l\p_x^\a\rho,\p_t\p_x^\a \rho\r_x}_{B_{31}} + \underbrace{\frac{1}{2\mu+\lambda}\l(1+h_0+\v\rho)^2\p_t\p_x^\a\rho,\p_x^{\a-\beta}u_0 \r_x}_{B_{32}}\\
&+\underbrace{\frac{1}{2\mu+\lambda}\l(1+h_0+\v\rho)^2\p_x^\a\rho,\p_t\p_x^{\a-\beta}u \r_x}_{B_{33}}
+\underbrace{\frac{2}{2\mu+\lambda}\l(1+h_0+\v\rho)(\p_t h_0+\v\p_t\rho)\p_x^\a\rho,\p_x^{\a-\beta}u \r_x}_{B_{34}}.
\end{aligned}
\end{equation}

For $B_{31}$, by substituting $\eqref{Remainder equations}_3$ and considering \eqref{A priori assumption}, we have, for $|\beta|=1$,
\begin{equation}\label{B31}
\begin{aligned}
B_{31}=&-\l\p_x^\a\rho, \p_x^\a\up{div}_x[(1+h_0+\v\rho)u]\r_x-\l\p_x^\a\rho, \p_x^\a\up{div}_x(\rho u_0)\r_x\\[4pt]
=&-\l\p_x^\a\rho,\p_x^\a\up{div}_x u(1+h_0+\v\rho)\r_x-\l\p_x^\a\rho,(\p_x^\a h_0+\v\p_x^\a\rho)\up{div}_x u\r_x+\frac{\v}{2}\l\up{div}_x u, (\p_x^\a\rho)^2\r_x\\[4pt]
&-\l\p_x^\a\rho,\p_x^\a u (\p_xh_0+\v\p_x\rho)\r_x-\l\p_x^\a\rho  u,\p_x^{\a+\beta}h_0\r_x-C_{\a,1} \l\p_x^\a\rho \p_x u,\p_x^\a h_0+\v\p_x^\a\rho\r_x\\[4pt]
&- C_{\a,1}\l\p_x^\a\rho,\p_x^{\a-\beta}\up{div}_x u\c(\p_x h_0+\v\p_x\rho)\r_x\\[4pt]
&-\sum_{1 \leq |\tilde{\a}| \leq |\a| -2}C_{\a,\tilde{\a}} \l\p_x^\a\rho,\p_x^{\tilde{\a}}\up{div}_x u (\p_x^{\a-\tilde{\a}}h_0+\v\p_x^{\a-\tilde{\a}}\rho)\r_x\\[4pt]
&-\sum_{1 \leq |\tilde{\a}| \leq |\a|-1}C_{\a,\tilde{\a}}\l\p_x^\a\rho,\p_x^{\tilde{\a}} u (\p_x^{\a+\beta -\tilde{\a}}h_0 + \v\p_x^{\a+\beta-\tilde{\a}}\rho)\r_x\\[4pt]
\leq& -\l\p_x^\a\rho,\p_x^\a\up{div}_x u(1+h_0+\v\rho)\r_x+C\E^{\frac{1}{2}}(t)\D(t).
\end{aligned}
\end{equation}

Similar to the estimate of $B_{31}$, for $B_{34}$, we have
\begin{equation}\label{B34}
B_{34}=-\l\up{div}_x[(1+h_0+\v\rho)u]\p_x^\a\rho,\p_x^{\a-1}u\r_x-\l\up{div}_x(\rho u_0)\p_x^\a\rho, \p_x^{\a-1}u\r_x
\lesssim \ \E^{\frac{1}{2}}(t)\D(t).
\end{equation}

For $B_{32}$,
\begin{equation*}
\begin{aligned}
B_{32}=&\underbrace{\frac{1}{2\mu+\lambda}\l\p_x^\a\big[u(1+h_0+\v\rho)\big],\p^\a_x u(1+h_0+\v\rho)^2 \r_x}_{B_{321}}\\
&\qquad \qquad \qquad +\underbrace{\frac{2}{2\mu+\lambda}\l\p_x^\a[u(1+h_0+\v\rho)],\p_x^{\a-\beta}u (\p_x h_0+\v\p_x\rho)(1+h_0+\v\rho)\r_x}_{B_{322}},
\end{aligned}
\end{equation*}
with $|\beta| = 1$.\\
Furthermore, $B_{321}$ is bounded by
\begin{equation*}
\begin{aligned}
B_{321}
\lesssim&\ \|\p_x u\|_{H^{|\a|-1}_x}^2+C\|u\|_{H^2_x}\|\p_x^\a h_0+\v\p_x^\a \rho\|_{L^2_x}\|\p_x^\a\rho\|_{L^2_x}\\[4pt]
&\ \qquad \qquad \qquad +\sum_{1\leq |\tilde{\a}| \leq |\a|-1}\|\p_x^{\tilde{\a}}u\|_{H^1_x}\|\p_x^{\a-\tilde{\a}}h_0+\v\p_x^{\a-\tilde{\a}}\rho\|_{H^1_x}\|\p_x^\a\rho\|_{L^2_x}\\[4pt]
\lesssim&\ \D_{mi,K,1}(t)+\E^{\frac{1}{2}}(t)\D(t),
\end{aligned}
\end{equation*}
and for $B_{322}$, by considering \eqref{estimate of rho0-rho-k}, we have, for $|\beta|=1$,
\begin{equation*}
\begin{aligned}
B_{322}=&\ \frac{2}{2\mu+\lambda}\l\p_x^\a[u(1+h_0+\v\rho)],\p_x^{\a-\beta}u (\p_x h_0+\v\p_x\rho)(1+h_0+\v\rho)\r_x\\[4pt]
\lesssim &\ \|\p_x^\a[u(1+h_0+\v\rho)]\|_{L^2_x}\|\p_x^{\a-\beta}u (\p_x h_0+\v\p_x\rho)\|_{L^2_x}\\[4pt]
\lesssim &\ \left[\|\p_x^\a u\|_{L^2_x}+\|u\|_{L^\infty_x}\|\p_x^\a(1+h_0+\v\rho)\|_{L^2_x}\right] \|\p_x^{\a-\beta}u\|_{L^4_x}\|\p_x h_0 +\v\p_x\rho\|_{L^4_x}\\[4pt]
&\ +\sum_{1 \leq |\tilde{\a}| \leq |\a|-1}\|\p_x^{\tilde{\a}}u\|_{L^4_x}\|\p_x^{\a-\tilde{\a}}(1+h_0+\v\rho)\|_{L^4_x}\|\p_x^{\a-\beta}u\|_{L^4_x}\|\p_x h_0+\v\p_x\rho\|_{L^4_x}\\[4pt]
\lesssim &\ \E^{\frac{1}{2}}(t)\D(t).
\end{aligned}
\end{equation*}
Therefore, we obtain
\begin{equation}\label{B32}
B_{32}\lesssim \D_{mi,K,1}(t)+\E^{\frac{1}{2}}(t)\D(t).
\end{equation}

Recalling the remainder system $\eqref{Remainder equations}_2$, $B_{33}$ can be divided into five parts as follow: for $|\beta| =1 $,
\begin{equation*}\label{B33-0}
\begin{aligned}
B_{33}=&\ \underbrace{-\frac{1}{2\mu+\lambda}\l\p_x^\a\rho,\p_x^{\a-\beta}(u_0\c\n_x u+u\c\n_x u_0)(1+h_0+\v\rho)^2 \r_x}_{B_{331}}\\[4pt]
&\ \underbrace{-\frac{1}{2\mu+\lambda}\l\p_x^\a\rho,\p_x^{\a-\beta} \left(\frac{1}{1+h_0+\v\rho}Lu\right) (1+h_0+\v\rho)^2 \r_x}_{B_{332}}\\
&\ +\underbrace{\frac{1}{(2\mu+\lambda)\v}\l\p_x^\a\rho,\p_x^{\a-\beta} \left[\frac{1}{1+h_0+\v\rho}(b-\v u)\right](1+h_0+\v\rho)^2 \r_x}_{B_{333}}\\[4pt]
&\ +\underbrace{\frac{1}{(2\mu+\lambda)\v}\l\p_x^\a\rho,\p_x^{\a-\beta}\left(\frac{1}{1+h_0+\v\rho}R_2\right)(1+h_0+\v\rho)^2\r_x}_{B_{334}}\\[4pt]
&\ +\underbrace{\frac{1}{2\mu+\lambda}\l\p_x^\a\rho,\p_x^{\a-\beta}\left(\frac{1}{1+h_0+\v\rho}R_3\right)(1+h_0+\v\rho)^2\r_x}_{B_{335}}.
\end{aligned}
\end{equation*}

For $B_{331}$, we have
\begin{equation*}\label{B331}
\begin{aligned}
B_{331}\lesssim &\ \|\p_x^\a \rho\|_{L^2_x}\|\p_x^{\a-\beta}(u_0\c\n_xu + u\c\n_x u_0)\|_{L^2_x}\\[4pt]
\lesssim&\ \left[\|(u_0,u)\|_{H^\a_x}+\|(u_0,u)\|_{H^2_x}\right] \|\p_x^\a \rho\|_{L^2_x} \left[\|(\n_x u_0,\n_x u)\|_{H^{|\a|-1}_x}+\|(\n_x u_0,\n_x u)\|_{H^2_x}\right]\\[4pt]
\lesssim&\ \E^{\frac{1}{2}}(t)\D(t),
\end{aligned}
\end{equation*}
with $|\beta|=1$.

By considering the Lemma \ref{estimate of Presure}, we find, for $B_{332}$,
\begin{equation*}\label{B332}
\begin{aligned}
B_{332}\leq&\ C\sum_{1\leq|\tilde{\a}|\leq|\a|-2}\|\p_x^\a\rho\|_{L^2_x}\|B^{\tilde{\a}}\|_{L^4_x}\|\p_x^{\a-\beta-\tilde{\a}}Lu\|_{L^4_x}-\frac{1}{2\mu+\lambda}\l\p_x^\a\rho,L(\p_x^{\a-\beta}u)(1+h_0+\v\rho)\r_x\\[4pt]
\leq &\ C\|\p_x^\a\rho\|_{L^2_x}(\|\p_x h_0\|_{H^{\a-2}_x}+\v\|\p_x\rho\|_{H^{|\a|-2}_x})\|\p_x u\|_{H^{|\a|-1}_x}+\l\p_x^\a\rho,\p_x^\a\up{div}_xu(1+h_0+\v\rho)\r_x\\[4pt]
&\ +\frac{\mu}{2\mu+\lambda}\l\p_x^{\a-\beta}\rho,(\p_x^\a\up{div}_x u-\Delta_x\p_x^{\a-\beta}u_0) (\p_xh_0+\v\p_x\rho)\r_x\\[4pt]
\leq&\ C\E^{\frac{1}{2}}(t)\D(t)+\l\p_x^\a\rho, \p_x^\a\up{div}_x u(1+h_0+\v\rho)\r_x,
\end{aligned}
\end{equation*}
with $|\beta|=1$.

By recalling the definition of $R_2$ in \eqref{L-R0-R3} and considering \eqref{estimate of rho0-rho-k}, the dissipation of $\p_x^\a\rho$ can be manifested in $B_{335}$, i.e., there exists a constant $C_*>0$ such that
\begin{equation*}\label{B335-0}
\begin{aligned}
B_{335}=&\ -\frac{A}{(2\mu+\lambda)\v}\l\p_x^\a\rho,\Big\{\p_x^\a[(1+h_0+\v\rho)^\g]-\p_x^\a[(1+h_0)^\g] \Big\}(1+h_0+\v\rho)\r_x\\[4pt]
&\ -\frac{A}{(2\mu+\lambda)\v}\sum_{1\leq|\tilde{\a}| \leq |\a|-1}\l\p_x^\a\rho,\p_x^{\tilde{\a}} \left(\frac{1}{1+h_0+\v\rho}\right)  \Big\{\p_x^{\a-\tilde{\a}}[(1+h_0+\v\rho)^\g]\\[4pt]
&\ -\p_x^{\a-\tilde{\a}}[(1+h_0)^\g]\Big\}(1+h_0+\v\rho)^2\r_x\\[4pt]
\leq&\ -2C_*\|\p_x^\a\rho\|^2_{L^2_x}\underbrace{-\frac{A}{(2\mu+\lambda)\v}\l(1+h_0+\v\rho)\p_x^\a\rho,B^\a_{\g}\r_x}_{B_{3351}}\\[4pt]
&\underbrace{-\frac{A}{(2\mu+\lambda)\v}\sum_{1\leq |\tilde{\a}| \leq|\a|-1}C_{\a,\tilde{\a}}\l(1+h_0+\v\rho)^2\p_x^\a\rho,B^{\tilde{\a}}  B^{\a-\tilde{\a}}_\g\r_x}_{B_{3352}}\\[4pt]
&\underbrace{-\frac{A\g}{2\mu+\lambda}\sum_{1\leq|\tilde{\a}|\leq|\a|-1}C_{\a,\tilde{\a}}\l(1+h_0+\v\rho)^{\g+1}\p_x^\a\rho, B^{\tilde{\a}} \p_x^{\a-\tilde{\a}}\rho\r_x}_{B_{3353}},
\end{aligned}
\end{equation*}
where $B^\a_{\g}$ is defined in \eqref{B-la}.

In terms of estimates of $B_{3351}-B_{3353}$, we need to use inequalities \eqref{estimate of 1+h+rho} and \eqref{estimate of nable P-1} in Lemma \ref{estimate of Presure},
\begin{equation*}
\begin{aligned}
\sum_{i=1}^3|B_{335i}| \lesssim\ \E^{\frac{1}{2}}(t)\D(t),
\end{aligned}
\end{equation*}
where \eqref{Key inequality} is also utilized in the last inequality of estimate of $B_{3352}$ and $B_{3353}$ above.\\
Hence, we find
\begin{equation*}\label{B335}
B_{335}\leq \ -2C_*\|\p_x^\a\rho\|^2_{L^2_x} +C\E^{\frac{1}{2}}(t)\D(t).
\end{equation*}

Recalling the definition of $R_3$ in \eqref{L-R0-R3} and applying \eqref{estimate of 1+h+rho} in the Lemma \ref{estimate of Presure}, we obtain the following estimates of $B_{333}$ and $B_{334}$: for $|\beta| = 1$,
\begin{equation*}\label{B333-B334}
\begin{aligned}
B_{333}=&\ \frac{1}{(2\mu+\lambda)\v}\l\p_x^\a\rho,\p_x^{\a-\beta}\left[\frac{1}{1+h_0+\v\rho}(b-\v u)\right](1+h_0+\v\rho)^2 \r_x\\[4pt]
\leq&\ C_*\|\p_x^\a\rho\|^2_{L^2_x}+\frac{C}{\v^2}\|\p_x^{\a-\beta}(b-\v u)\|^2_{L^2_x}\\[4pt]
&\ \qquad \qquad +\frac{C}{\v}\sum_{1\leq|\tilde{\a}|\leq |\a|}\|\p_x^\a\rho\|_{L^2_x}\|B^{\tilde{\a}-\beta}\|_{L^4_x}\|\p_x^{\a-\beta-\tilde{\a}}(b-\v u)\|_{L^4_x}\\[4pt]
\leq&\ C_*\|\p_x^\a\rho\|^2_{L^2_x} + C\left(\D_{mi,K,1}(t)+\E^{\frac{1}{2}}(t)\D(t)\right),\\[8pt]
B_{334}=&\ -\l\p_x^\a\rho,\p_x^\a \left(\frac{R_3}{1+h_0+\v\rho}\right)\r_x\\[4pt]
\lesssim&\ \|\p_x^\a\rho\|_{L^2_x}\|B^\a\|_{L^2_x}\|R_3\|_{L^\infty_x}+\|\p_x^\a\rho\|_{L^2_x}\|B^0\|_{L^\infty_x}\|\p_x^\a R_3\|_{L^2_x}\\[4pt]
&\ \qquad \qquad \qquad \qquad +\sum_{1\leq|\tilde{\a}| \leq|\a|}\|\p_x^\a\rho\|_{L^2_x}\|B^{\tilde{\a}}\|_{L^4_x}\|\p_x^{\a-\tilde{\a}}R_3\|_{L^4_x}\\[4pt]
\lesssim&\ \E^{\frac{1}{2}}(t)\D(t),
\end{aligned}
\end{equation*}
where \eqref{Key inequality} is utilized in the last inequality. %

Combining the estimates of $B_{331}$ - $B_{335}$ above, we obtain
\begin{equation}\label{B33}
B_{33}\leq -C_*\|\p_x^\a\rho\|^2_{L^2_x}+\l\p_x^\a\rho,\p_x^\a\up{div}_x u(1+h_0+\v\rho)\r_x + C\left(\D_{mi,K,1}(t)+\E^{\frac{1}{2}}(t)\D(t)\right).
\end{equation}

Then, by plugging the estimates \eqref{B31}, \eqref{B34}, \eqref{B32}, and \eqref{B33} into \eqref{B3-0}, we obtain that, for $|\beta|=1$,
\begin{multline}\label{B3}
    \frac{1}{2}\frac{d}{dt}\|\p_x^\a\rho\|^2_{L^2_x} + \frac{1}{2\mu + \lambda}\frac{d}{dt}\int \p_x^\a\rho\c\p_x^{\a-\beta}u(1+h_0+\v\rho)^2 dx+C_*\|\p_x^\a\rho\|^2_{L^2_x}\\
\lesssim\big(\D_{mi,K,1}(t)+\E^{\frac{1}{2}}(t)\D(t)\big).
\end{multline}

The proof can be finally completed by summing up the inequality above with $ 1 \leq |\a| \leq 4$.

\end{proof}

In the following lemma, we deduce \textit{a priori} estimate of the mixed partial derivative of $g$, or more precisely, $(\I-\P)g$ (since $\| g\|_{H^4_{x,v}}\sim \|(\I-\P)g\|_{H^4_{x,v}}+\| g\|_{L^2_v H^4_x}$, hence we only need to estimate $\|(\I-\P)g\|_{H^4_{x,v}}$). To achieve this, we first apply the projection operator $\I-\P$ to both-hand-sides of $\eqref{Remainder equations}_1$ in the remainder system, which leads to
\begin{equation}\label{Remainder-equation-I-P-g}
\begin{aligned}
\p_t(\I-\P)g+\frac{1}{\v^2}\L(\I-\P)g=\frac{1}{\v}(\I-\P)R_0+(\I-\P)R_1,
\end{aligned}
\end{equation}
where the fact $\L g=\L(\I-\P)g+\P_1 g$ in \eqref{Decop-L} is used, and $R_0,\,R_1$ are defined in \eqref{L-R0-R3}.

\begin{lemma}\label{estiamte-Mi-kinetic-part-mixed-partial}
Under the assumptions of Theorem \ref{Main-Limits} and \eqref{A priori assumption}, then, for any $0\leq t\leq T$,
\begin{equation}\label{Step four}
\frac{1}{2}\frac{d}{dt}\E_{mi,K,4}(t)+C_4\D_{mi,K,4}(t) \leq \tilde{C}_4\left( \D_{mi,K,1}(t)+\D_{mi,F}(t)+\D_{ma}(t)+\E^{\frac{1}{2}}(t)\D(t)\right),
\end{equation}
where the constants $\tilde{C}_4,\,C_4>0$ are independent of $\v$, $\delta$, $\delta_1$ and $T$.
\end{lemma}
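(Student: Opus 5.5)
We will work with the microscopic equation \eqref{Remainder-equation-I-P-g} rather than the full kinetic equation, since $(\I-\P)g$ no longer carries the singular macroscopic coupling $-\frac{1}{\v}v\c u\M$ and $\frac{1}{\v^2}\L$ acts on it coercively. For $|\a|+|\b|\leq 3$ and $|\beta_1|=1$ we apply $\p_x^\a\p_v^{\b+\beta_1}$ to \eqref{Remainder-equation-I-P-g}, take the $L^2_{x,v}$ inner product with $\p_x^\a\p_v^{\b+\beta_1}(\I-\P)g$, and then sum with weights $\bar C_{\a,\b}$. These weights are chosen hierarchically: large for few $v$-derivatives, smaller for more. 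On the left we obtain $\frac12\frac{d}{dt}\|\p_x^\a\p_v^{\b+\beta_1}(\I-\P)g\|^2$ plus $\frac{1}{\v^2}\l \p_x^\a\p_v^{\b+\beta_1}\L(\I-\P)g,\p_x^\a\p_v^{\b+\beta_1}(\I-\P)g\r$.

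Since $\L=-\Delta_v+\frac{|v|^2}{4}-\frac32$, the commutator $[\p_v^{\b+\beta_1},\L]$ involves only $v$-multiplications of lower order, namely $v_j\p_v^{\b+\beta_1-e_j}$ and constants. The principal part therefore gives $\frac{1}{\v^2}\|\p_x^\a\p_v^{\b+\beta_1}(\I-\P)g\|_\nu^2$ up to a constant shift. The commutator terms are bounded by $\frac{\eta}{\v^2}\|\p_x^\a\p_v^{\b+\beta_1}(\I-\P)g\|_\nu^2+\frac{C_\eta}{\v^2}\sum_{|\b'|<|\b|+1}\|\p_x^\a\p_v^{\b'}(\I-\P)g\|_\nu^2$. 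The lowest level ($\b'=0$) is controlled by $\D_{mi,K,1}$, and this is exactly why $\D_{mi,K,1}$ appears on the right of \eqref{Step four}. Induction on $|\b|$, with $\bar C_{\a,\b}$ decreasing in $|\b|$, absorbs the lower-order terms into the dissipation already obtained at the previous level, yielding $C_4\D_{mi,K,4}$.

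Next we treat the right-hand side. The term $\frac1\v(\I-\P)R_0$ contains the linear transport piece $-\frac1\v(\I-\P)(v\c\n_x g)$. We write it as $-\frac1\v(\I-\P)(v\c\n_x(\I-\P)g)-\frac1\v(\I-\P)(v\c\n_x\P g)$, handling each part differently:
\begin{itemize}
\item For the first part we use Cauchy--Schwarz in the form $\frac1\v\|\p_x^{\a+e}\p_v^{\b'}(\I-\P)g\|_\nu\|\cdots\|_\nu\le \frac{\eta}{\v^2}\D_{mi,K,4}+C\D_{mi,K,1}$ when the $x$-derivative count reaches $4$. When $|\a|+1+|\b'|\le 3$, the term instead lands in $\D_{mi,K,4}$ at another level, again ordered by the weights.
\item For the second part, $(\I-\P)(v\c\n_x\P g)$ involves only $\n_x a,\n_x b$ times fixed Gaussians. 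This yields $\frac{1}{\v}\|\n_x(a,b)\|_{H^3}\|\cdot\|_\nu\le\frac{\eta}{\v^2}\D_{mi,K,4}+C(\|\n_x a\|^2_{H^3_x}+\|\n_x b\|^2_{H^3_x})$. We then split $b=(b-\v u)+\v u$: $\|\n_x(b-\v u)\|^2\le \v^2\D_{mi,K,1}$ and $\v^2\|\n_x u\|^2\le\D_{mi,K,1}$, while $\|\n_x a\|^2_{H^3}=\D_{mi,F}$.
\end{itemize}
The $g_1$-terms in $R_0$ and $R_1$, such as $(\I-\P_0)(v\c\n_x g_1)$ and $\p_t g_1$, depend only on $(m_0,u_0)$ with explicit Gaussian $v$-weights. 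They give $\frac{\eta}{\v^2}\D_{mi,K,4}+C\D_{ma}$, or $\E^{1/2}\D$ when they are nonlinear. The remaining terms are $\frac1\v\M^{-1}\up{div}_v(u_0 g\M)$, $\frac1\v\M^{-1}\up{div}_v(um_0M)$ and the $R_1$ terms with $u$. These are quadratic, and we bound them via Sobolev embedding (e.g.\ the $L^3$--$L^6$, $L^4$--$L^4$ splittings used in Lemma \ref{estimate-kinetic-Mi-1}) by $C\E^{1/2}\D$. The $\frac1\v$ is paired with $\|\cdot\|_\nu/\v$ from the dissipation, and $\P g$ contributions go to $(a,b)$ through the same $b-\v u$ splitting as in $B_{1131}$.

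The main obstacle is the $\frac1\v$ transport term coupling $x$- and $v$-derivatives: its control at the macroscopic level requires $\D_{mi,F}$ and the $b-\v u$ splitting, while at the microscopic level it requires a careful choice of the weights $\bar C_{\a,\b}$ so that the terms moving one $v$-derivative to an $x$-derivative close within $\D_{mi,K,4}+\D_{mi,K,1}$. Choosing $\eta$ small relative to $C_4$ and summing over all $(\a,\b)$ then gives \eqref{Step four}.
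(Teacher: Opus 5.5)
Your proposal is correct and follows essentially the same route as the paper. You apply $\p_x^\a\p_v^{\b+\beta_1}$ to the microscopic equation \eqref{Remainder-equation-I-P-g} and use the coercivity of $\L$. The $[\p_v,\L]$ commutator terms, which carry one fewer $v$-derivative, are absorbed by summing over $|\b|$ with hierarchical weights $\bar C_{\a,\b}$, and the $v$-derivative-free level goes into $\D_{mi,K,1}$. The transport term is split into a microscopic part, where one $v$-derivative is traded for an $x$-derivative, and a macroscopic part controlled by $\D_{mi,F}$.

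Your handling of the macroscopic transport piece via $\n_x b=\n_x(b-\v u)+\v\n_x u$ is in fact slightly more careful than the paper's, which attributes the $\n_x b$ contribution directly to $\D_{mi,F}$.
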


\begin{proof}

Applying $\p_x^\a\p_v^\b$ with $1\leq|\a|+|\b|\leq 4,\,|\b| \geq 1$ to \eqref{Remainder-equation-I-P-g}, multiplying it by $\p_x^\a\p_v^\b(\I-\P) g$, and integrating over $(x,v) \in \mathbb{R}^3 \times \mathbb{R}^3$, we obtain, for $|\beta_1|=1$ and $|\beta_2|=2$,
\begin{equation}\label{Estimate of mix-partial-I-P-g-1}
\begin{aligned}
&\frac{1}{2}\frac{d}{dt}\|\p_x^\a\p_v^\b(\I-\P)g\|^2_{L^2_{x,v}}+\frac{C_0}{\v^2}\|\p_x^\a\p_v^\b(\I-\P)g\|^2_{\nu}\\[4pt]
\leq& \underbrace{-\frac{C_{\b,1}}{2\v^2}\l v\p_x^\a\p_v^{\b-\beta_1}(\I-\P)g,\p_x^\a\p_v^\b(\I-\P)g\r_{x,v}}_{B_{41}} \underbrace{-\frac{C_{\b,2}}{2\v^2}\l \p_x^\a\p_v^{\b-\beta_2}(\I-\P)g,\p_x^\a\p_v^\b(\I-\P)g\r_{x,v}}_{B_{42}}\\
&+\underbrace{\frac{1}{\v}\l\p_x^\a\p_v^\b(\I-\P)R_0,\p_x^\a\p_v^\b(\I-\P)g\r_{x,v}}_{B_{43}} + \underbrace{\l\p_x^\a\p_v^\b(\I-\P)R_1,\p_x^\a\p_v^\b(\I-\P)g\r_{x,v}}_{B_{44}},
\end{aligned}
\end{equation}
where a direct calculation
$$\p_x^\a\p_v^\b\L(\I-\P)g = \L[\p_x^\a\p_v^\b(\I-\P)g]+\frac{C_{\b,1}}{2}v\p_x^\a\p_v^{\b-\beta_1}(\I-\P)g+\frac{C_{\b,2}}{2}\p_x^\a\p_v^{\b-\beta_2}(\I-\P)g,$$
and \eqref{Poin inequality} are used.

For $B_{41}$ and $B_{42}$,
\begin{equation}\label{B41-B42}
\begin{aligned}
|B_{41}| + |B_{42}| = & -\frac{C_{\b,1}}{2 \v^2}\l v\p_x^\a\p_v^{\b-\beta_1}(\I-\P)g, \, \p_x^\a\p_v^\b(\I-\P)g\r_{x,v}\\
&\qquad \qquad \qquad +\frac{C_{\b,2}}{2\v^2}\l\p_x^\a\p_v^{\b-\beta_2}(\I-\P)g, \, \p_x^\a\p_v^\b(\I-\P)g\r_{x,v}\\[4pt]
\leq&\ \frac{c_0}{2^6\v^2}\|\p_x^\a\p_v^\b(\I-\P)g\|_{\nu}^2+\frac{C}{\v^2}\|\p_x^\a\p_v^{\b-\beta_1}(\I-\P)g\|^2_{L^2_{x,v}},
\end{aligned}
\end{equation}
where $|\beta_1|=1$ and $|\beta_2|=2$.

Recalling $R_0$ in \eqref{L-R0-R3} and $(\I-\P_0)(\I-\P)g=(\I-\P)g$ in \eqref{I-P}, $B_{43}$ is divided into the following five parts:
\begin{equation}\label{B43-0}
\begin{aligned}
B_{43}=&\underbrace{-\frac{1}{\v}\l\p_x^\a\p_v^\b(\I-\P)(v\c\n_x g_1), \, \p_x^\a\p_v^\b(\I-\P)g\r_{x,v}}_{B_{431}}\underbrace{-\frac{1}{\v}\l\p_x^\a\p_v^\b(\I-\P)(v\c\n_x g),\, \p_x^\a\p_v^\b(\I-\P)g\r_{x,v}}_{B_{432}}\\[4pt]
&\underbrace{-\frac{1}{\v}\l\p_x^\a\p_v^\b(\I-\P)\left[\frac{1}{\M}\up{div}_v(u_0 g_1 \M)\right],\, \p_x^\a\p_v^\b(\I-\P)g\r_{x,v}}_{B_{433}}\\[4pt]
&\underbrace{-\frac{1}{\v}\l\p_x^\a\p_v^\b(\I-\P)\left[\frac{1}{\M}\up{div}_v(u_0 g \M)\right], \, \p_x^\a\p_v^\b(\I-\P)g\r_{x,v}}_{B_{434}}\\[4pt]
&+\underbrace{\frac{1}{\v}\l\p_x^\a\p_v^\b(\I-\P)\left[\frac{1}{\M}\up{div}_v(u m_0 M)\right], \, \p_x^\a\p_v^\b(\I-\P)g\r_{x,v}}_{B_{435}}.
\end{aligned}
\end{equation}
For $B_{431}$, by plugging in the definition of $g_1$,
\begin{equation}\label{B431}
\begin{aligned}
|B_{431}| \leq&\ \frac{1}{\v}\|\p_x^\a\p_v^\b(v\c\n_x g_1)\|_{L^2_{x,v}}\|\p_x^\a\p_v^\b(\I-\P)g\|_{L^2_{x,v}}\\[4pt]
\leq&\ \frac{c_0}{2^6\v^2}\|\p_x^\a\p_v^\b(\I-\P)g\|^2_{L^2_{x,v}} + C\left(\D_{ma}(t)+\E^{\frac{1}{2}}(t)\D(t)\right).
\end{aligned}
\end{equation}
For $B_{432}$, we have
\begin{equation*}
\begin{aligned}
|B_{432}| = &\ \left| -\frac{1}{\v}\l\p_x^\a\p_v^\b(\I-\P)(v\c\n_x g), \, \p_x^\a\p_v^\b(\I-\P)g\r_{x,v} \right| \\[4pt]
=&\ \Big|  \underbrace{-\frac{1}{\v}\l\p_x^\a\p_v^\b [v\c\n_x (\I-\P)g],\p_x^\a\p_v^\b(\I-\P)g\r_{x,v}}_{B_{4321}} \underbrace{-\frac{1}{\v}\l\p_x^\a\p_v^\b [v\c\n_x \P g],\p_x^\a\p_v^\b(\I-\P)g\r_{x,v}}_{B_{4322}}\\[4pt]
&\ +\underbrace{\frac{1}{\v}\l\p_x^\a\p_v^\b \P (v\c\n_x g),\p_x^\a\p_v^\b(\I-\P)g\r_{x,v}}_{B_{4323}} \Big|,
\end{aligned}
\end{equation*}
where $B_{4321}$ and $B_{4322}$ can be estimated as follows: for $|\beta_1|=1$,
\begin{equation*}
\begin{aligned}
|B_{4321}| = &\ \left| -\frac{C_{\b,1}}{\v}\l\p_x^{\a+\beta_1}\p_v^{\b-\beta_1}(\I-\P)g,\, \p_x^\a\p_v^\b(\I-\P)g\r_{x,v} \right| \\[4pt]
\leq& \ \frac{c_0}{2^8\v^2}\|\p_x^\a\p_v^\b(\I-\P)g\|^2_{L^2_{x,v}}+C\|\p_x^{\a+\beta_1}\p_v^{\b-\beta_1}(\I-\P)g\|^2_{L^2_{x,v}},\\[8pt]
|B_{4322}| =&\ \left| -\frac{1}{\v}\l\p_x^\a\p_v^\b[(v\c\n_x a+v\otimes v:\n_x b)\M],\, \p_x^\a\p_v^\b(\I-\P)g\r_{x,v} \right|\\[4pt]
\leq&\  \frac{c_0}{2^8\v^2}\|\p_x^\a\p_v^\b(\I-\P)g\|^2_{L^2_{x,v}} + C\|(\p_x^{\a+\beta_1}a,\p_x^{\b+\beta_1}b)\|^2_{L^2_{x,v}}\\[4pt]
\leq&\ \frac{c_0}{2^8\v^2}\|\p_x^\a\p_v^\b(\I-\P)g\|^2_{L^2_{x,v}}+C\D_{mi,F}(t),
\end{aligned}
\end{equation*}
furthermore, noting that
\begin{equation*}
\begin{aligned}
\P(v\c\n_x g)=&\ \P[v\c\n_x\P g]+\P[v\c\n_x(\I-\P)g]\\[4pt]
=&\ (v\c\n_x a+\up{div}_x b)\M+\P[v\c\n_x(\I-\P)g]\\[4pt]
=&\ \left[v\c\n_x a + \up{div}_x b + \int_{\R^3} v\c\n_x(\I-\P)g \M dv + v \int_{\R^3} v\c\n_x(\I-\P)g  v\M dv\right]\M,
\end{aligned}
\end{equation*}
we find, for $B_{4323}$,
\begin{equation*}
\begin{aligned}
|B_{4323}| =&\ \left| \frac{1}{\v}\l\p_x^\a\p_v^\b \P v\c\p_x g, \, \p_x^\a\n_v^\b(\I-\P)g\r_{x,v} \right| \\[4pt]
\leq &\ \frac{c_0}{2^8\v^2}\|\p_x^\a\p_v^\b(\I-\P)g\|^2_{L^2_{x,v}}+C\|(\p_x^{\a+\beta_1} a,\p_x^\a\up{div}_x b)\|^2_{L^2_x}+C\|\p_x^{\a+\beta_1}(\I-\P)g\|^2_{L^2_{x,v}}\\[4pt]
\leq &\ \frac{c_0}{2^8\v^2}\|\p_x^\a\p_v^\b(\I-\P)g\|^2_{L^2_{x,v}} + C\left(\D_{ma,F}(t)+\D_{mi,K,1}(t)\right),
\end{aligned}
\end{equation*}
therefore, we obtain, for $|\beta_1| = 1$,
\begin{equation}\label{B432}
|B_{432}| \leq \ \frac{c_0}{2^6\v^2}\|\p_x^\a\p_v^\b(\I-\P)g\|^2_{L^2_{x,v}} + C\left[\|\p_x^{\a+\beta_1}\p_v^{\b-\beta_1}(\I-\P)g\|^2_{L^2_{x,v}}+\D_{ma,F}(t)+\D_{mi,K,1}(t)\right].
\end{equation}
By considering the fact that
$$\P\left[\frac{1}{\M}\up{div}_v(u_0 g_1 \M)\right]=0, \quad \P \left[\frac{1}{\M}\up{div}_v(u_0 g \M)\right] = -v\c u_0 a\M,$$
we can obtain the following estimates for $B_{433}$, $B_{343}$ and $B_{435}$, respectively,
\begin{equation}
\begin{aligned}
|B_{433}|+|B_{434}|\lesssim\E^{\frac{1}{2}}(t)\D_{ma}(t).\\[8pt]
\end{aligned}
\end{equation}

\begin{equation}
\begin{aligned}
|B_{435}| =&\ \Big|  -\frac{1}{\v}\l\p_x^\a\p_v^\b\left[\frac{1}{\M}\up{div}_v(u m_0 M)\right], \, \p_x^\a\p_v^\b(\I-\P)g\r_{x,v}\\[4pt]
& \ \qquad \qquad \qquad +\frac{1}{\v}\l\p_x^\a\p_v^\b(v\c u m_0\M),\, \p_x^\a\p_v^\b(\I-\P)g\r_{x,v} \Big| \\[4pt]
\leq&\ \frac{1}{\v}\|\p_x^\a\p_v^\b \left[\frac{1}{\M}\up{div}_v(u m_0 M)\right]\|_{L^2_{x,v}}\|\p_x^\a\p_v^\b(\I-\P)g\|_{L^2_{x,v}} \\[4pt]
&\ \qquad \qquad \qquad +\frac{1}{\v}\|\p_x^\a\p_v^\b(v\c u m_0\M)\|_{L^2_{x,v}}\|\p_x^\a\p_v^\b(\I-\P)g\|_{L^2_{x,v}}\\[4pt]
\lesssim&\ \E^\frac{1}{2}(t)\D_{ma}(t).
\end{aligned}
\end{equation}
By substituting the estimates of $B_{431}$ to $B_{435}$ above into \eqref{B43-0}, we can obtain, for $|\beta_1| = 1$,
\begin{multline}\label{B43}
|B_{43}| \leq \  C \left[\|\p_x^{\a+\beta_1}\p_v^{\b-\beta_1}(\I-\P)g\|^2_{L^2_{x,v}}
+\D_{ma,F}(t)+\D_{mi,K,1}(t)+\E^\frac{1}{2}(t)\D_{ma}(t)\right] \\
+ \frac{c_0}{2^5\v^2}\|\p_x^\a\p_v^\b(\I-\P)g\|^2_{L^2_{x,v}}.
\end{multline}

Next, recalling the definition of $R_1$ in \eqref{L-R0-R3}, we can follow a similar estimate of $B_{43}$ and obtain that, for $B_{44}$,
\begin{equation}\label{B44}
|B_{44}| \leq\ \frac{c_0}{2^6\v^2}\|\p_x^\a\p_v^\b(\I-\P)g\|^2_{L^2_{x,v}} + C\left(\D_{ma,F}(t)+\E^\frac{1}{2}(t)\D_{ma}(t)\right).
\end{equation}

Though note that the quantity $\frac{C}{\v^2}\|\n_x^\a\n_v^{\b-1}(\I-\P)g\|^2_{L^2_{x,v}}$ in \eqref{B41-B42} is still not under control, however, by observing that the order of $v$-derivatives in this quantity is $|\b|-1$, we can take summation over $|\b|$ and substitute the estimates \eqref{B41-B42}, \eqref{B43}, \eqref{B44} into \eqref{Estimate of mix-partial-I-P-g-1}, we finally obtain

\begin{equation}\label{Estimate of mix-partial-I-P-g}
\frac{1}{2}\frac{d}{dt}\E_{mi,K,4}(t)+C_{1,4}\D_{mi,K,4}(t)\lesssim\D_{mi,K,1}(t)+\D_{mi,F}(t)+\D_{ma}(t)+\E^{\frac{1}{2}}(t)\D(t).
\end{equation}

\end{proof}

\subsubsection{Estimates of fluid part of the remainder system}
\label{subsubsec:fluid_remainder}

The goal of this subsection is to obtain the energy estimate of the fluid part of the remainder system \eqref{Remainder equations}.
To achieve this, we propose the following hyperbolic-parabolic coupled system: for $1\leq i,j\leq 3$,
\begin{equation}\label{Pg-a-b}
\left\{
\begin{aligned}
&\v\p_t a+\up{div}_x b=0,\\[4pt]
&\p_t b_i+\frac{1}{\v}\p_{x_i} a+ \frac{1}{\v^2} (b_i-\v u_i) + \frac{1}{\v}\sum_{j=1}^3\Gamma_{ij}[\p_{x_j}(\I-\P)g]=\frac{1}{\v}R_{4i}+R_{5i},\\[4pt]
\end{aligned}
\right.
\end{equation}
where $\Gamma_{ij}(g)$, $R_{4i}$, and $R_{5i}$ are defined as follows:
\begin{equation}\label{R4i-R5i-Gamma-l}
\begin{aligned}
R_{4i}=& \ u_{0i}a+u_i m_0,\\[4pt]
R_{5i}=&\ -\big[\p_t(u_{0i} m_0)+u_i a+\p_t u_{0i}+\p_t\p_{x_i} m_0\big],\\[4pt]
\Gamma_{ij}(g)=&\ \int g(v_i v_j-1)\M dv.
\end{aligned}
\end{equation}

Note that, in contrast with the hyperbolic-parabolic coupled system in \cite{CJADRJMA11}, there is no gradient term of $b$ in the new system above, 
since $\frac{1}{\v^2}\|\n_x b\|_{H^3_x}\leq \frac{1}{\v^2}\|\n_x b-\v\n_x u\|_{H^3_x}+\|\n_x u\|_{H^3_x}$, and the terms $\frac{1}{\v^2}\|\n_x b-\v\n_x u\|_{H^3_x}$, $\|\n_x u\|_{H^3_x}$ have been included in the dissipative parts shown in Lemma \ref{estimate-kinetic-Mi-1}.

Now, we are in a position to present the following lemma about the energy estimate of the fluid part in the remainder system \eqref{Remainder equations}.

\begin{lemma}\label{estiamte-Mi-fluid-part}
Under the assumptions of Theorem \ref{Main-Limits} and \eqref{A priori assumption}, then, for any $0\leq t\leq T$,
\begin{equation}\label{Last step}
\begin{aligned}
\frac{1}{2}\frac{d}{dt}\E_{mi,F}(t)+C_5\D_{mi,F}(t)\leq \tilde{C}_5 \left( \D_{mi,K,1}(t)+\D_{ma}(t)+\E^{\frac{1}{2}}(t)\D(t) \right),
\end{aligned}
\end{equation}
where the constants $\tilde{C}_5,\,C_5 > 0$ are independent of $\v$, $\delta$, $\delta_1$ and $T$.
\end{lemma}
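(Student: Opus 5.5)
The proof starts from the hyperbolic–parabolic system \eqref{Pg-a-b}. This system comes from taking velocity moments of $\eqref{Remainder equations}_1$ against $\M$ and $v_i\M$, using $\L g=\L(\I-\P)g+\P_1 g$. We then apply the standard interactive-functional argument of \cite{CJADRJMA11,DFT10}. The energy $\E_{mi,F}$ has two pieces: $\|(a,b)\|_{H^3_x}^2$ and the mixed term $\v\sum_{|\a|\le3}\l\p_x^{\a+\beta_1}a,\p_x^\a b\r_x$. The first piece is handled by plain $L^2$ energy estimates on $\p_x^\a a,\p_x^\a b$. The mixed term supplies the dissipation $\|\n_x a\|_{H^3_x}^2=\D_{mi,F}$.

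\textbf{Step 1 (zeroth-order energy).} Apply $\p_x^\a$ with $|\a|\le 3$, and test $\eqref{Pg-a-b}_1$ against $\frac1\v\p_x^\a a$ and $\eqref{Pg-a-b}_2$ against $\p_x^\a b_i$. The singular terms $\frac1{\v}\l\p_x^\a\div_x b,\p_x^\a a\r$ and $\frac1\v\l\p_x^\a\n_x a,\p_x^\a b\r$ cancel after integration by parts. The damping term is split as $\frac1{\v^2}\l\p_x^\a(b-\v u),\p_x^\a b\r=\frac1{\v^2}\|\p_x^\a(b-\v u)\|^2+\frac1\v\l\p_x^\a(b-\v u),\p_x^\a u\r$. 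The first part is controlled by $\D_{mi,K,1}$ and the second by Cauchy–Schwarz with $\|u\|_{H^4_x}$. For the $\Gamma$ term, we use $\frac1\v\|\n_x(\I-\P)g\|_{H^3_xL^2_v}\|b\|_{H^3_x}$, move $\frac1{\v^2}\|(\I-\P)g\|^2$ into $\D_{mi,K,1}$, and handle $\|b\|^2\le 2\|b-\v u\|^2+2\v^2\|u\|^2$ in the same way. This requires care, since $\|u\|_{L^2}$ itself is not dissipated. Where low-order $L^2$ norms appear, I would estimate the pairing as $\l\p_x^\a(\cdot),\p_x^\a(b-\v u)\r+\v\l\cdot,\p_x^\a u\r$ and then use $\dot H^{-1}$–$\dot H^1$ duality, as was done for $B_{1212}$. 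The forcing terms $\frac1\v R_{4}$ and $R_5$ are treated exactly as $B_{113}$, $B_{1131}$ and $B_{121}$ in Lemma \ref{estimate-kinetic-Mi-1}. They are quadratic in small quantities, giving $\E^{1/2}\D$, except for the linear terms $\p_t u_0+\p_t\n_x m_0$, which are bounded by $\D_{ma}$.

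\textbf{Step 2 (dissipation of $\n_x a$).} Take $\p_x^\a$ of $\eqref{Pg-a-b}_2$ and test against $\v\p_x^{\a+\beta_1}a$. The term $\frac1\v\p_{x_i}a$ yields $\|\p_x^{\a+\beta_1}a\|^2$, which is the dissipation. The time derivative is rewritten as $\frac{d}{dt}\v\l\p_x^\a b,\p_x^{\a+\beta_1}a\r-\v\l\p_x^\a b,\p_t\p_x^{\a+\beta_1}a\r$. Substituting $\v\p_t a=-\div_x b$, the second piece becomes $\|\p_x^\a\div_x b\|^2\lesssim\|\n_x(b-\v u)\|^2+\v^2\|\n_x u\|^2\lesssim\D_{mi,K,1}$. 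The remaining terms are handled as follows:
\begin{itemize}
\item The damping term $\frac1\v\l\p_x^\a(b-\v u),\p_x^{\a+\beta_1}a\r$ is at most $\eta\|\n_x a\|^2+C\D_{mi,K,1}$.
\item The $\Gamma$ term is at most $\eta\|\n_x a\|^2+\frac{C}{\v^2}\|\p_x^{\a+\beta_1}(\I-\P)g\|^2$.
\item The $R_4$ and $R_5$ terms give $\eta\|\n_x a\|^2+C\D_{ma}+C\E^{1/2}\D$.
\end{itemize}
Choosing $\eta$ small and adding Step 1 to Step 2 yields \eqref{Last step}, since the cross term is dominated by $\|(a,b)\|_{H^3_x}^2$ and so $\E_{mi,F}\sim\|(a,b)\|_{H^3_x}^2$.

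\textbf{Main obstacle.} The hardest part is the low-frequency and singular bookkeeping in Step 1. No dissipation of $a$ or $b$ is available at order zero, and the $\frac1\v$ factors must be absorbed. The approach is to always pair against $b-\v u$, which gains $\v$, and to push the leftover $\v\l\cdot,u\r$ into $\|\n_x u\|$ via Sobolev embedding $L^6\hookrightarrow \dot H^1$ together with the $L^{6/5}$ or $\dot H^{-1}$ bounds of the NSS quantities from Proposition \ref{Solu-NSS}. The term $\frac1\v\l\p_x^\a(u_0a),\p_x^\a b\r$ from $R_4$ likewise needs $\|a\|_{L^6}\lesssim\|\n_x a\|$ together with the smallness of $\|u_0\|_{H^1}$.
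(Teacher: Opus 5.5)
Your Step 2 is exactly the paper's proof. There, $\p_x^\a\eqref{Pg-a-b}_2$ is tested against $\v\p_x^\a\p_{x_i}a$ and the time derivative is moved onto the interaction term. The identity $\v\p_t a=-\div_x b$ turns the leftover $\v\l\p_x^\a\p_{x_i}\p_t a,\p_x^\a b_i\r_x$ into $\|\p_x^\a\div_x b\|_{L^2_x}^2\lesssim\v^2\D_{mi,K,1}$. The damping and $\Gamma_{ij}$ terms are absorbed by Young's inequality, the $R_4,R_5$ terms give $\D_{ma}+\E^{1/2}\D$, and summing over $|\a|\leq 3$ yields \eqref{B7}. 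The paper stops there and never differentiates the quadratic part $\|(a,b)\|_{H^3_x}^2$ of $\E_{mi,F}$ on its own. Your Step 1, which tries to, has a genuine gap.

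At $|\a|=0$, testing $\eqref{Pg-a-b}_2$ with $b$ produces
\[
-\frac{1}{\v^2}\l b-\v u,b\r_x=-\frac{1}{\v^2}\|b-\v u\|_{L^2_x}^2-\frac{1}{\v}\l b-\v u,u\r_x .
\]
The last term cannot be bounded by $\D_{mi,K,1}+\D_{ma}+\E^{1/2}\D$, for three reasons:
\begin{itemize}
\item $\|u\|_{L^2_x}$ is not dissipated.
\item Cauchy--Schwarz only gives $\E^{1/2}\D^{1/2}$.
\item The $\dot H^{-1}$--$\dot H^1$ device you borrow from $B_{1212}$ needs a negative-norm (or $L^{6/5}$) bound on the other factor, and none is available for $b-\v u$.
\end{itemize}
The same obstruction hits $\l um_0,u\r_x$ and $\l u_0a,u\r_x$ once you split $\frac1\v\l R_4,b\r_x$ along $b=(b-\v u)+\v u$. (The $\Gamma$ term at $|\a|=0$ is fine after moving $\n_x$ onto $b$; the damping term is not.)

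This is not a matter of sharper bookkeeping. Take the trivial NSS profile, $\rho=0$, $a=0$, $(\I-\P)g=0$ and $b=\v(1-\theta)u$ with $0<\theta<1$. Then $\D_{mi,F}=0$ and
\[
\frac12\frac{d}{dt}\E_{mi,F}\geq\frac12\frac{d}{dt}\|b\|_{L^2_x}^2=\theta(1-\theta)\|u\|_{L^2_x}^2,
\]
whereas $\D\lesssim\theta^2\|u\|_{H^4_x}^2+\|\n_x u\|_{H^4_x}^2$. Taking $\theta$ small and $u$ concentrated at low frequencies rules out any bound of the form \eqref{Last step} for the full $\E_{mi,F}$.

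The zeroth-order energy of $(a,b)$ closes only together with the $u$-energy. That is what Lemma \ref{estimate-kinetic-Mi-1} does through $\|g\|_{H^4_xL^2_v}^2$: there
\[
\frac{1}{\v^2}\|b\|_{L^2_x}^2-\frac1\v\l u,b\r_x-\frac1\v\l b-\v u,u\r_x=\frac{1}{\v^2}\|b-\v u\|_{L^2_x}^2,
\]
and the $u_0a$, $um_0$ contributions of $R_0$ and $R_3$ cancel. So drop Step 1. The quadratic part of $\E_{mi,F}$ is redundant, since it is dominated by $\E_{mi,K,1}$ (cf. \eqref{C1-6}). The provable content of the lemma is precisely your Step 2, and that is all the closing argument needs once $\E_{mi,F}$ is read as the interaction functional $\v\sum_{|\a|\leq 3}\l\p_x^{\a+\beta_1}a,\p_x^\a b\r_x$.
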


\begin{proof}
By applying the derivative operator $\p_x^\a$ with $0\leq|\a|\leq 3$ to both-hand-sides of the hyperbolic-parabolic system $\eqref{Pg-a-b}_2$, then multiplying it by $\p_x^\a\p_{x_i} a$, and integrating over $x \in \R^3$, we obtain, %

\begin{equation}\label{B7-0}
\begin{aligned}
\|\p_x^{\a+\beta}a\|^2_{L^2_x}=& \sum_{i=1}^3\l\p_x^\a\p_{x_i} a, \, \p_x^\a\p_{x_i} a\r_x\\[4pt]
=& \ -\v\frac{d}{dt} \int_{\R^3} \p_x^{\a+\beta} a \, \p_x^\a b \,dx + \underbrace{\v\sum_{i=1}^3\l\p_x^\a\p_{x_i}\p_t a, \, \p_x^\a b_i\r_x}_{B_{71}}\\[4pt]
&\ \underbrace{-\frac{1}{\v}\sum_{i=1}^3\l\p_x^\a\p_{x_i} a, \, \p_x^\a(b_i-\v u_i)\r_x}_{B_{72}} \underbrace{-\sum_{i,j=1}^3\l\p_x^\a\p_{x_i} a,\, \p_x^\a\p_{x_i}(\I-\P)g(v_iv_j-1)\M\r_{x,v}}_{B_{73}}\\[4pt]
&\ +\underbrace{\sum_{i=1}^3\l\p_x^\a\p_{x_i} a, \, \p_x^\a R_{4i}\r_x}_{B_{74}} + \underbrace{\sum_{i=1}^3\l\p_x^\a\p_{x_i} a, \, \p_x^\a R_{5i}\r_x}_{B_{75}},
\end{aligned}
\end{equation}
for $0\leq|\a|\leq 3$ and $|\beta| = 1$.\\[4pt]
By substituting $\p_t a$ by $\eqref{Pg-a-b}_1$, we have, for $B_{71}$,
\begin{equation}\label{B71}
|B_{71}| = \left| -\sum_{i=1}^3\l\p_x^\a\p_{x_i}\up{div}_x b,\p_x^\a b_i\r_x \right| = \|\p_x^\a\up{div}_x b\|^2_{L^2_x},
\end{equation}
and $B_{72}$, by using Young's inequality, we have
\begin{equation}\label{B72}
\begin{aligned}
|B_{72}| =&\ \left| -\frac{1}{\v}\sum_{i=1}^3\l\p_x^\a\p_{x_i} a,\p_x^\a(b_i-\v u_i)\r_x \right|  \\[4pt]
\leq & \ \frac{1}{16}\|\p_x^{\a+\beta} a\|^2_{L^2_x}+\frac{C}{\v^2}\|\p_x^\a(b-\v u)\|^2_{L^2_x}\\[4pt]
\leq&\ \frac{1}{16}\|\p_x^{\a+\beta} a\|^2_{L^2_x}+C\D_{mi,K,1}(t),
\end{aligned}
\end{equation}
and, for $B_{73}$,
\begin{equation}\label{B73}
\begin{aligned}
|B_{73}| \leq&\ \frac{1}{16}\|\p_x^{\a+\beta} a\|^2_{L^2_x} + \frac{C}{\v^2}\|\p_x^{\a+\beta}(\I-\P)g\|^2_{L^2_{x,v}}\\[4pt]
\leq&\ \frac{1}{16}\|\p_x^{\a+\beta} a\|^2_{L^2_x}+C\D_{mi,K,1}(t),
\end{aligned}
\end{equation}
Furthermore, recalling the definition of $R_{4i}$ and $R_{5i}$ in \eqref{R4i-R5i-Gamma-l} and noting \eqref{A priori assumption}, we have
\begin{equation}\label{B74}
\begin{aligned}
|B_{74}| =&\ \sum_{i=1}^3 \l\p_x^\a\p_{x_i} a, \, \p_x^\a(u_{0i}a+u_i m_0)\r_x\\[4pt]
\lesssim& \ \|\p_x^{\a+\beta}a\|_{L^2_x}\|u_0\|_{H^{|\a|}_x}\|\n_x a\|_{H^{|\a|-1}_x} + C\|\p_x^{\a+\beta}a\|_{L^2_x}\|m_0\|_{H^{|\a|}_x}\|\n_x u\|_{H^{|\a|-1}_x}\\[4pt]
\lesssim&\ \E^{\frac{1}{2}}(t)\D(t)),
\end{aligned}
\end{equation}
and
\begin{equation}\label{B75}
\begin{aligned}
|B_{75}| = &\ \left| -\sum_{i=1}^3\l\p_x^\a\p_{x_i} a, \, \p_x^\a \left[\p_t(u_{0i} m_0)+u_i a+\p_t u_{0i}+\p_t\p_{x_i} m_0\right]\r_x \right| \\[4pt]
\lesssim&\ \|\p_x^{\a+\beta}a\|_{L^2_x} \left(\|\p_t u_0\|_{H^{|\a|}_x}\|\n_x m_0\|_{H^{|\a|-1}_x} + \|\p_t m_0 \|_{H^{|\a|}_x}\|\p_x u_0\|_{H^{|\a|-1}_x} \right)\\[4pt]
&\ +\|\p_x^{\a+\beta}a\|_{L^2_x}\|\p_t\p_x^\a u_0\|_{L^2_x}\|\p_t\p_x^{\a+1} m_0\|_{L^2_x}\\[4pt]
\leq&\ \frac{1}{16}\|\p_x^{\a+\beta} a\|^2_{L^2_x}+C(\|\p_t\p_x^\a u_0\|^2_{L^2_x}+\|\p_t\p_x^{\a+\beta} m_0\|^2_{L^2_x})+C\E^{\frac{1}{2}}(t)\D(t)\\[4pt]
\leq&\ \frac{1}{16}\|\p_x^{\a+\beta} a\|^2_{L^2_x} + C\left(\D_{ma}(t)+\E^{\frac{1}{2}}(t)\D(t)\right).
\end{aligned}
\end{equation}\\
Hence, by inserting the estimate of $B_{71}$ - $B_{75}$ into \eqref{B7-0}, we find
\begin{equation}\label{B7}
\begin{aligned}
\frac{7}{8}\|\p_x^{\a+\beta}a\|^2_{L^2_x}+\v\frac{d}{dt}\int_{\R^3} \p_x^{\a+\beta} a \, \p_x^\a b \,dx \ \lesssim \ \D_{ma}(t) + \D_{mi,K,1}(t) +\E^{\frac{1}{2}}(t)\D(t).
\end{aligned}
\end{equation}

The proof can be finally completed by summing up with $|\a|=0,1,2,3$ in \eqref{B7}.

\end{proof}

\subsection{Proof of total energy estimate}
\label{subsec:total_energy}

In this subsection, we present the detailed proof of the total energy estimate, i.e., Proposition \ref{A priori estimates}, by collecting the previous energy estimate Lemmas \ref{estimate-kinetic-Mi-1}, \ref{estimate-kinetic-Mi-2}, \ref{estimate-kinetic-Mi-3}, \ref{estiamte-Mi-kinetic-part-mixed-partial} and \ref{estiamte-Mi-fluid-part} altogether.

Noting the definitions of $\E_{mi,F}(t),\,\E_{mi,K,3}(t)$ and $\E_{mi,K,1}(t)$ in \eqref{part of energy functionals}, we can find that there exists a constant $\tilde{C}_6>0$ such that
\begin{equation}\label{C1-6}
|\E_{m,F}(t)+\E_{mi,K,3}(t)|\leq \tilde{C}_6\E_{mi,K,1}(t).
\end{equation}

Next, we prove the total energy estimate \eqref{total_energy_prop} by the following four steps:

\textbf{Step 1:} To eliminate $\D_{mi,K,3}(t)$ in \eqref{Step two} of the Lemma \ref{estimate-kinetic-Mi-2}, we choose a constant $\tilde{\lambda}_1>0$ large enough such that
\begin{equation}\label{tilde-lambda-1}
\frac{\tilde{C}_3}{2} \tilde{\lambda}_1 \geq \  \tilde{C}_2,
\end{equation}
where $\tilde{C}_3$ is the constant in Lemma \ref{estimate-kinetic-Mi-3}.

By taking $\eqref{Step two} + \tilde{\lambda}_1 \times  \eqref{Step three} + \eqref{Last step}$, we find that there exists constants $C_7,\tilde{C}_7 > 0 $ such that
\begin{multline}\label{First time}
\frac{1}{2}\frac{d}{dt} \left( \tilde{\lambda}_1\E_{mi,K,2}(t) +\E_{mi,K,3}(t)+\E_{mi,F}(t)\right) + C_7 \left(\sum_{i=2}^3\D_{mi,K,i}(t)+\D_{mi,F}(t)\right)\\[4pt]
 \leq \tilde{C}_7 \left( \D_{mi,K,1}(t)+\D_{ma}(t) + \E^{\frac{1}{2}}(t)\D(t)\right).
\end{multline}

\textbf{Step 2:} To eliminate $\D_{mi,K,1}(t)$ in inequality \eqref{First time}, we choose a constant $\tilde{\lambda}_2>0$ large enough such that
\begin{equation}\label{tilde-lambda-2}
\frac{C_1\tilde{\lambda}_2}{2} \geq  \left( \tilde{C}_7 + \tilde{\lambda}_1\right) \left( \tilde{C}_6+1 \right),
\end{equation}
where $C_1$, $\tilde{C}_7$,\,$\tilde{C}_6$ are constants in Lemma \ref{estimate-kinetic-Mi-1}, \eqref{First time}, and \eqref{C1-6} respectively.

Then, by applying $ \tilde{\lambda}_2 \times \eqref{Step one} + \eqref{First time}$, there exist $C_8,\tilde{C}_8>0$ such that
\begin{multline}\label{Second time}
\frac{1}{2}\frac{d}{dt} \left(\tilde{\lambda}_2 \E_{mi,K,1}(t) + \tilde{\lambda}_1\E_{mi,K,2}(t) + \E_{mi,K,3}(t)+\E_{mi,F}(t) \right)\\[4pt]
+ C_8\left(\sum_{i=1}^3\D_{mi,K,i}(t)+\D_{mi,F}(t)\right) \leq \tilde{C}_8 \left(\D_{ma}(t)+\E^{\frac{1}{2}}(t)\D(t) \right).
\end{multline}

\textbf{Step 3:} To eliminate $\D_{ma}(t)$ in inequality \eqref{Second time}, we choose choose a constant $\tilde{\lambda}_3>0$ large enough such that
\begin{equation}\label{tilde-lambda-3}
\frac{C_0}{2}\tilde{\lambda}_3\geq \tilde{C}_8,
\end{equation}
where $C_0$ is the constant in Proposition \ref{Solu-NSS}.

Then, by using $\eqref{Second time} + \tilde{\lambda}_3 \times \eqref{Macro-part}$, it turns out that one can find $C_9,\tilde{C}_9>0$ such that
\begin{multline}\label{Third time}
\frac{1}{2}\frac{d}{dt}\big(\tilde{\lambda}_2\E_{mi,K,1}(t)+\tilde{\lambda}_1\E_{mi,K,2}(t)+\E_{mi,K,3}(t)+\E_{mi,F}(t)+\tilde{\lambda}_3\E_{ma}(t)\big)\\[4pt]
+C_9\big(\sum_{i=1}^3\D_{mi,K,i}(t)+\D_{mi,F}(t)+\D_{ma}(t)\big)\leq \tilde{C}_9\E^{\frac{1}{2}}(t)\D(t).
\end{multline}

\textbf{Step 4:} To eliminate $\D_{mi,K,1}(t),\,\D_{mi,F}(t),\,\D_{ma}(t)$ in inequality \eqref{Step four} of Lemma \ref{estiamte-Mi-kinetic-part-mixed-partial}, we choose a constant $\tilde{\lambda}_4>0$ large enough such that
\begin{equation}\label{tilde-lambda-4}
\frac{C_8\tilde{\lambda}_4}{2}\geq \tilde{C}_4,
\end{equation}
where $\tilde{C}_4$ is the constant in Lemma \ref{estiamte-Mi-kinetic-part-mixed-partial}.

Therefore, by applying $\tilde{\lambda}_4 \times \eqref{Third time} + \eqref{Step four}$, we can show that there exist $C_{10},\,\tilde{C}_{10}>0$ such that
\begin{multline}\label{Last time}
\frac{1}{2}\frac{d}{dt} \left(\tilde{\lambda}_4\tilde{\lambda}_2\E_{mi,K,1}(t)+\tilde{\lambda}_4\tilde{\lambda}_1\E_{mi,K,2}(t)+\tilde{\lambda}_4\E_{mi,K,3}(t) + \E_{mi,K,4}(t)+\tilde{\lambda}_4\E_{mi,F}(t)+\tilde{\lambda}_4\tilde{\lambda}_3\E_{ma}(t)\right)\\[4pt]
+ C_{10} \left(\sum_{i=1}^4\D_{mi,K,i}(t)+\D_{mi,F}(t)+\D_{ma}(t)\right) \leq \tilde{C}_{10} \E^{\frac{1}{2}}(t)\D(t).
\end{multline}

Thus, by defining
\begin{equation*}
\begin{aligned}
\E(t)&: =\tilde{\lambda}_4\tilde{\lambda}_2\E_{mi,K,1}(t)+\tilde{\lambda}_4\tilde{\lambda}_1\E_{mi,K,2}(t)+\tilde{\lambda}_4\E_{mi,K,3}(t)+\E_{mi,K,4}(t)+\tilde{\lambda}_4\E_{mi,F}(t)+\tilde{\lambda}_4\tilde{\lambda}_3\E_{ma}(t),\\[4pt]
\D(t)&: = \sum_{i=1}^4\D_{mi,K,i}(t)+\D_{mi,F}(t)+\D_{ma}(t),
\end{aligned}
\end{equation*}
and re-naming $\lambda_i$, $i=1,\cdots,6$ as follows:
\begin{equation}\label{lambda constants}
\begin{aligned}
\lambda_1:=\tilde{\lambda}_4\tilde{\lambda}_2,\quad \lambda_2:=\tilde{\lambda}_4\tilde{\lambda}_1,\quad
\lambda_3:=\tilde{\lambda}_4, \quad
\lambda_4:=1, \quad
\lambda_5:=\tilde{\lambda}_4,\quad
\lambda_6:=\tilde{\lambda}_4\tilde{\lambda}_3,
\end{aligned}
\end{equation}
we obtain
\begin{equation}
    \frac{1}{2} \frac{d \E(t)}{d t} + C_{10} \D(t) \leq \tilde{C}_{10}\E^{\frac{1}{2}}(t)\D(t).
\end{equation}
Thus, the total energy estimate \eqref{total_energy_prop} can be finally obtained by integration in time as well as considering the Proposition \ref{Solu-NSS}, \eqref{equivalent} and \eqref{A priori assumption}.

\section{Proof of main results}
\label{sec:proof_of_main}

\subsection{Proof of Theorem \ref{Main-Limits}}
\label{subsec:proof_main_Theorem}

In this subsection, we complete the proof of Theorem \ref{Main-Limits} by extending the local well-posedness to global-in-time.

Let $T^*_\v$ denote the maximal existence time for the solution to the Cauchy problem (\ref{Remainder equations}) for any given $0<\v\leq \v_0$ with $\v_0$ given in Proposition \ref{Local-in-time solution of the remainder equations} with $M_0$ replaced by $\delta_1$. Proposition \ref{Local-in-time solution of the remainder equations} implies that $T^*_\v>0$. If $T^*_\v=\infty$ for any given $\v$, the global existence of the solutions is obtained. For otherwise, i.e., $T^*_\v\in(0,\infty)$ for some $\v\in (0,\v_0)$, we will prove that it leads to a contradiction by continuity argument.

In fact, using the initial assumption $\mathbb{E}(0)\leq\delta$ in Theorem \ref{Main-Limits} and the continuity of $\mathbb{E}(t)$ in $(0,T_\v^*)$, then there exists a $\bar{T}_\v^*\in (0,T_\v^*)$ such that
\begin{equation}\label{barT}
\sup_{0\leq t\leq\bar{T}_\v^*} \big(\|g\|_{H^4_{x,v}}^2+\|(u,\rho)\|_{H^4_x}^2\big) \leq \delta_1,
\end{equation}where $\delta_1>\max\{\delta,2C^*\delta\}$ for some constants $\delta,\,C^*$ given in \eqref{Initial data} and Corollary \ref{Instant energy-dissipative rate}.

Then using Corollary \ref{Instant energy-dissipative rate}, we have
\begin{equation}\label{Iteration results}
\sup_{0\leq t\leq \bar{T}_\v^*}\mathbb{E}(t)=\sup_{0\leq t\leq \bar{T}_\v^*}\big(\|g\|_{H^4_{x,v}}^2+\|(u,\rho)\|_{H^4_x}^2+\|m_0\|_{H^6_x}^2+\|(u_0, h_0)\|_{H^5_x}^2\big)\leq C^*\delta<\frac{1}{2}\delta_1.
\end{equation}
By using the standard continuity argument, (\ref{barT}) and (\ref{Iteration results}) yield that
\begin{equation}\label{barT+1}
\|g(t)\|_{H^4_{x,v}}^2+\|(u,\rho)(t)\|_{H^4_x}^2 \leq \delta_1,
\end{equation}
 for all $t\in[0,T^*_\v)$, provided that $\mathbb{E}(0)\leq\delta$ where $\max\{\delta,2C^*\delta\}<\delta_1$.

Choosing $T^*_0=T^*_\v-\frac{T_\v(\delta_1)}{2}$ as initial time where $T_\v(\delta_1)$ is determined by Proposition \ref{Local-in-time solution of the remainder equations} and (\ref{barT+1}), we obtain from Proposition \ref{Local-in-time solution of the remainder equations} that the solution can be extended to $[0,T_\v^*+\frac{T_\v(\delta_1)}{2})$ for any given $0<\v \leq \v_0 := \v_0(\delta_1)$, which is a contradiction with the definition of $T_\v^*$. Thus $T_\v^*=\infty$. The proof of the global existence of the solution is complete. Since the proof of the uniqueness is standard, we omit the detail for brevity.

\subsection{Proof of Corollary \ref{Conergence}}
\label{subsec:proof_main_corollary}

In this subsection, we present the proof of Corollary \ref{Conergence} by using the Sobolev embedding theorems.

By recalling the expansion form of $f^{\v}$
$$f^\v=(1+m_0)M + \v[(v\c u_0)(1+m_0)-v\c\n_x m_0\big]M + \v g\M,$$
and applying the Sobolev embedding $H^2\hookrightarrow L^\infty$, we have
\begin{equation}
\begin{aligned}
|f^\v-(1+m_0)M|\leq&\ \sup_{t\geq 0}\|f^\v-(1+m_0)M\|_{L^\infty_xL^\infty_v}\\[4pt]
\lesssim &\ \sup_{t\geq 0}\|f^\v-(1+m_0)M\|_{H^2_x H^2_v} \\[4pt]
\lesssim &\ \v \sup_{t\geq 0}\|[(v\c u_0)(1+m_0)-v\c\n_x m_0\big]M+g\M\|_{H^2_x H^2_v}\\[4pt]
\lesssim &\ \v\sup_{t\geq 0}\| \left[ (v\c u_0)(1+m_0)-v\c\n_x m_0\right]M\|_{H^2_xH^2_v} +  \v \sup_{t\geq 0} \|g \sqrt{M} \|_{H^2_x L^2_v}\\[4pt]
&\ + \v \sup_{t\geq 0}\|\n_v \left[ (\I-\P)g\M \right]\|_{H^2_x H^1_v} + \v\sup_{t \geq 0} \|\n_v(\P g \sqrt{M})\|_{H^2_x H^1_v}\\[4pt]
\lesssim &\  \v\sup_{t\geq 0} \Big[\|u_0\|_{H^2_x}(1+\|m_0\|_{H^2_x})+\|\n_xm_0\|_{H^2_x} + \|g \|_{H^2_x L^2_v}\\[4pt]
&\ +\|\n_v(\I-\P)g\|_{H^2_xH^1_v}+\|(a,b)\|_{H^2_x}\Big] \\[4pt]
\lesssim & \v,
\end{aligned}
\end{equation}
where the decomposition $g=(\I-\P)g+\P g=(\I-\P)g+(a+v\c b)\M$ is utilized.
Similarly, considering the expansion form $u^\v=u_0+\v u$ and $\rho^\v=(1+h_0)+\v\rho$, we also obtain
\begin{equation}
\begin{split}
 |u^\v-u_0| \leq& \ \v\sup_{t\geq 0}\|u\|_{L^\infty_x}\lesssim \v\sup_{t\geq 0}\|u\|_{H^2_x}\lesssim \v, \\[4pt]
    |\rho^\v- (1+h_0)|  \leq &\  \v\sup_{t\geq 0}\|\rho\|_{L^\infty_x}\lesssim \v\sup_{t\geq 0}\|\rho\|_{H^2_x}\lesssim \v,
\end{split}
\end{equation}
where the uniform boundedness of all quantities, i.e., $(g,u)$, $(m_0,u_0,h_0)$, $(a,b)$, is guaranteed by the global-in-time energy estimate \eqref{The global estimate}.

\appendix

\section{Proof of Lemma \ref{estimate of Presure}}
\label{sec:appdenix-pressure}

In the appendix, we present the complete proof of the important Lemma \ref{estimate of Presure} in dealing with the pressure term.

\begin{proof}
By \eqref{m0u0h0}, and Sobolev embedding relation, we can find that
$$\frac{1}{2}\leq \sup_{0\leq t\leq T}\|1+h_0\|_{L^\infty_x}\leq \frac{3}{2}.$$

Also, noticing the $0<\v\leq 1$ and \eqref{A priori assumption}, then we can obtain
\begin{equation}\label{estimate of rho0-rho-k}
\begin{aligned}
\frac{1}{4}\leq \sup_{0\leq t\leq T}\|1+h_0+\v\rho\|_{L^\infty_x}\leq 2.
\end{aligned}
\end{equation}

By using \eqref{A priori assumption} and $0 < \v\leq 1$, we have
\begin{equation}\label{Key inequality}
\v \left(\|\rho\|_{L^\infty_x}+\|\n_x\rho\|_{H^1_x}+\|\n_x\rho\|_{H^2_x}\right) \leq  \frac{3}{8}.
\end{equation}

We only prove the inequality \eqref{estimate of 1+h+rho} in the case of $1\leq |\a|\leq 4$ and $p=2$.

For the sake of convenience, we denote $\n^k_x$ as any $x-$partial derivative $\p^{\a}_x$ with the multi-index $|\a|=k \in \mathbb{N}$.

Applying \eqref{A priori assumption}, \eqref{estimate of rho0-rho-k} and \eqref{Key inequality}, we have
\begin{equation*}\label{estimate-Lp-h}
\begin{aligned}
\|\n_x \left(\frac{1}{1+h_0+\v\rho}\right) \|_{L^2_x}\lesssim&\ \|\n_x h_0\|_{L^2_x}+\v\|\n_x\rho\|_{L^2_x},\\[4pt]
\|\n_x^2 \left(\frac{1}{1+h_0+\v\rho}\right) \|_{L^2_x}\lesssim& \ \left(\|\n_x^2 h_0\|_{L^2_x}+\v\|\n_x^2\rho\|_{L^2_x}\right) + \left( \|\n_x h_0\|_{L^4_x}+\v\|\n_x\rho\|_{L^4_x} \right)^2\\
\lesssim&\ \sum_{i=1}^2(\|\n_x h_0\|_{H^1_x}^i+\v^i\|\n_x\rho\|_{H^1_x}^i)\\
\lesssim&\ \|\n_x h_0\|_{H^1_x}+\v\|\n_x\rho\|_{H^1_x},\\[4pt]
\|\n_x^3 \left(\frac{1}{1+h+\v\rho}\right) \|_{L^2_x}
\lesssim&\ \left(\|\n_x^3 h_0\|_{L^2_x}+\v\|\n_x^3\rho\|_{L^2_x}\right) + \left( \|\n_x^2 h_0\|_{L^4_x}+\v\|\n_x^2\rho\|_{L^4_x}\right) \left(\|\n_x h_0\|_{L^4_x}+\v\|\n_x\rho\|_{L^4_x}\right)\\
&\ + \left( \|\n_x h_0\|_{L^6_x}+\v\|\n_x\rho\|_{L^6_x} \right)^3\\
\lesssim&\ \sum_{i=1}^3 \left( \|\n_x h_0\|_{H^2_x}^i+\v^i\|\n_x\rho\|_{H^2_x}^i \right)\\
\lesssim&\ \|\n_x h_0\|_{H^2_x}+\v\|\n_x\rho\|_{H^2_x},\\[4pt]
\| \n_x^4 \left(\frac{1}{1+h+\v\rho}\right) \|_{L^2_x}
\lesssim&\ \left(\|\n_x^4 h_0\|_{L^2_x} + \v\|\n_x^4\rho\|_{L^2_x}\right) + \left(\|\n_x^3 h_0\|_{L^4_x}+\v\|\n_x^3\rho\|_{L^4_x}\right) \left(\|\n_x h_0\|_{L^4_x}+\v\|\n_x\rho\|_{L^4_x}\right)\\
& + \left(\|\n_x^2 h_0\|_{L^4_x}+\v\|\n_x^2\rho\|_{L^4_x}\right)^2 + \left(\
\|\n_x h_0\|_{L^\infty_x}+\v\|\n_x\rho\|_{L^\infty_x}\right) \left(\|\n_x h_0\|_{L^6_x}+\v\|\n_x\rho\|_{L^6_x}\right)^3 \\
\lesssim& \ \sum_{i=1}^4 \left( \|\n_x h_0\|_{H^3_x}^i+\v^i\|\n_x\rho\|_{H^3_x}^i \right)\\
\lesssim&\ \|\n_x h_0\|_{H^3_x} + \v\|\n_x\rho\|_{H^3_x},\\
\end{aligned}
\end{equation*}

For any $\tilde{\g}\in \R$, we first claim that
\begin{equation}\label{estimate of bar-gamma-h}
\left| (1+h_0)^{\tilde{\g}}-1 \right| \lesssim |h_0|,
\end{equation}
for $B_{\tilde{\g}}^0$,
\begin{equation}\label{estimate of bar-gamma-rho}
\left|B^0_{\tilde{\g}}\right| = \left|(1+h_0+\v\rho)^{\tilde{\g}}-(1+h_0)^{\tilde{\g}}-\tilde{\g}\v(1+h_0+\v\rho)^{\tilde{\g}-1}\rho \right| \lesssim  \v^2 |\rho^2|,
\end{equation}
and
\begin{equation}\label{estimate of 1+h0+rho-2}
\left| (1+h_0+\v\rho)^{\tilde{\g}} - (1+h_0)^{\tilde{\g}}\right| \lesssim  \v|\rho|.
\end{equation}

We prove the claims above by the Taylor expansion.

If $\tilde{\g}=0$ or $\tilde{\g}=1$, \eqref{estimate of bar-gamma-h} and \eqref{estimate of bar-gamma-rho} are correct.

If $\tilde{\g}\neq 0$ and $\tilde{\g}\neq 1$. To prove \eqref{estimate of bar-gamma-h}, we denote $\psi_1(s)=(1+s h_0)^{\tilde{\g}}-1$ with $0\leq s\leq 1$ and then
\begin{equation*}
\begin{aligned}
\psi'_1(s)=&\ \tilde{\g}(1+sh_0)^{\tilde{\g}-1}h_0,\\[4pt]
\psi''_1(s)=&\ \tilde{\g}(\tilde{\g}-1)(1+sh_0)^{\tilde{\g}-2}h_0^2.
\end{aligned}
\end{equation*}
By furthering using the Taylor expansion, we find that there exists a constant $\xi_1\in(0,1)$ such that
\begin{equation*}
\begin{aligned}
|(1+h_0)^{\tilde{\g}}-1| = |\psi_1(1)-\psi_1(0)| =& |\psi'_1(0)+\frac{1}{2}\psi''_1(\xi_1)|\\
\lesssim &\  |h_0|+|h_0|^2\\
\lesssim&\ \left(|h_0|+\sup_{0\leq t\leq T}\|h_0\|_{L_x^\infty}|h_0| \right) \lesssim \  |h_0|.
\end{aligned}
\end{equation*}

To prove the inequality \eqref{estimate of bar-gamma-rho}, we denote $\psi_2(s)=(1+h_0+s\v\rho)^{\tilde{\g}}$ with $0\leq s\leq 1$ and then
\begin{equation*}
\begin{aligned}
\psi'_2(s)=&\tilde{\g}(1+h_0+s\v\rho)^{\tilde{\g}-1}\v\rho,\\[4pt]
\psi''_2(s)=&\tilde{\g}(\tilde{\g}-1)(1+h_0+s\v\rho)^{\tilde{\g}-2}\v^2\rho^2.
\end{aligned}
\end{equation*}
Therefore, still using the Taylor expansion, there exists a constant $\xi_2,\xi_3\in(0,1)$ such that
\begin{equation*}
\begin{aligned}
|B^0_{\tilde{\g}}|&\ =|(1+h_0+\v\rho)^{\tilde{\g}}-(1+h_0)^{\tilde{\g}}-\tilde{\g}\v(1+h_0+\v\rho)^{\tilde{\g}-1}\rho|\\
&\ =|\psi_2(1)-\psi_2(0)-\psi_2'(1)|\\
&\ =|(\psi'_2(0)-\psi_2'(1))+\frac{1}{2}\psi''_2(\xi_2)| \\
&\ =|\psi_2''(\xi_3) + \frac{1}{2}\psi''_2(\xi_2)|\\
&\ \lesssim \v^2|(1+h_0+\xi_2\v\rho)^{\tilde{\g}-2}+(1+h_0+\xi_3\v\rho)^{\tilde{\g}-2}||\rho^2|\\
&\ \lesssim \v^2|\rho^2|,
\end{aligned}
\end{equation*}
where \eqref{A priori assumption} is used.

Observing the proof of the claim \eqref{estimate of bar-gamma-rho} and using \eqref{Key inequality}, it is easy to obtain \eqref{estimate of 1+h0+rho-2}.

By applying the estimate \eqref{estimate of rho0-rho-k} and \eqref{estimate of bar-gamma-h}, we have
\begin{equation*}
\begin{aligned}
\|(1+h_0)^{\g-1}-1\|_{L^\infty_x}&\lesssim \|h_0\|_{L^\infty_x}\lesssim \|h_0\|_{H^2_x},\\[4pt]
\|(1+h_0)^{\g-1}-1\|_{H^1_x}&\leq \|(1+h_0)^{\g-1}-1\|_{L^2_x}+\|\n_x[(1+h_0)^{\g-1}-1]\|_{L^2_x}\\
&\lesssim \|h_0\|_{L^2_x} + \|\n_x h_0\|_{L^2_x}\\
&\lesssim \|h_0\|_{H^1_x}.
\end{aligned}
\end{equation*}

By \eqref{estimate of rho0-rho-k}, \eqref{estimate of 1+h0+rho-2} and \eqref{A priori assumption}, we have
\begin{equation*}
\begin{aligned}
&\ \|(1+h_0+\v\rho)^{\g-1}-(1+h_0)^{\g-1}\|_{L^\infty_x}\lesssim \v\|\rho\|_{L^\infty_x} \lesssim \v\|\rho\|_{H^2_x},\\[4pt]
& \ \|(1+h_0+\v\rho)^{\g-1}-(1+h_0)^{\g-1}\|_{H^1_x}\\[4pt]
\leq & \  \|(1+h_0+\v\rho)^{\g-1}-(1+h_0)^{\g-1}\|_{L^2_x} + \left\|\n_x\big[(1+h_0+\v\rho)^{\g-1}-(1+h_0)^{\g-1}\right] \|_{L^2_x}\\[4pt]
\lesssim & \ \left[\v\|\rho\|_{L^2_x}+\|(1+h_0+\v\rho)^{\g-2}-(1+h_0)^{\g-2}\|_{L^2_x}\|\n_x h_0\|_{L^\infty_x}+\v\|\n_x\rho\|_{L^2_x}\right]\\[4pt]
\lesssim & \ \v\|\rho\|_{H^1_x}.
\end{aligned}
\end{equation*}

Notice that
\begin{equation}\label{nabla-x-1+h+rho}
\begin{aligned}
B_{\g}^1=&\ \g B_{\g-1}^0\n_x h_0+P_{2,11}\v\rho\n_x h_0, \\[6pt]
B_{\g}^2=&\ \g B_{\g-1}^0 \n_x^2 h_0+\g(\g-1)B_{\g-2}^0(\n_x h_0)^2 + \big[P_{2,21}\n_x^2 h_0\rho+P_{2,22}(\n_x h_0)^2\rho+P_{2,23}\n_x h_0\n_x\rho \\[4pt]
&\ +P_{2,24} (\n_x\rho)^2\v\big]\v, \\[6pt]
B_{\g}^3=&\ \g B_{\g-1}^0 \n_x^3 h_0+\g(\g-1)B_{\g-2}^0\n_x^2 h_0\n_x h_0+\g(\g-1)(\g-2)B_{\g-3}^0(\n_x h_0)^3 \\[4pt]
&\ +\big[P_{2,31}\n_x^3 h_0\rho+P_{2,32}\n_x^2 h_0 \n_x h_0\rho+P_{2,33}(\n_x h_0)^3\rho+P_{2,34}\n_x^2 h_0\n_x\rho+P_{2,35}\n_x h_0\n_x^2\rho \\[4pt]
&\ +P_{2,36}\n_x h_0(\n_x\rho)^2\v+P_{2,37}(\n_x\rho)^3\v^2\big]\v,\\[6pt]
B_\g^4=&\ \g B_{\g-1}^0 \n_x^4 h_0+\g(\g-1)B_{\g-2}^0\n_x^3 h_0\n_x h_0+\g(\g-1)(\g-2)B_{\g-3}^0
(\n_x^2 h_0)^2\\[4pt]
&\ +\g(\g-1)(\g-2)(\g-3)B_{\g-4}^0(\n_x h_0)^4+\big[P_{2,41}\n_x^4h_0\rho+P_{2,42}\n_x^3h_0\n_x h_0\rho+P_{2,43}(\n_x^2 h_0)^2\rho\\[4pt]
&\ +P_{2,44}(\n_x h_0)^4\rho+P_{2,45}\n_x^3 h_0\n_x\rho+P_{2,46}\n_x^2 h_0\n_x^2\rho+P_{2,47}\n_x h_0\n_x^3\rho+P_{2,48}\n_x^2 h_0(\n_x\rho)^2\v\\[4pt]
&\ +\v P_{2,49}(\n_xh_0)^2(\n_x\rho)^2+\v P_{2,50}\n_xh_0\n_x^2\rho\n_x\rho+\v^2P_{2,51}\n_x h_0(\n_x\rho)^3+\v^3 P_{2,52}(\n_x\rho)^4\big]\v,\\
\end{aligned}
\end{equation}
where $P_{2,ij}$ are polynomials of $1+h_0+\v\rho$, i.e., $P_{2,ij}=P_{2,ij}(1+h_0+\v\rho)$ for any $i,j$ and $B^\a_{\g}$ is defined in \eqref{B-la}.

For $B_\g^0$, we have
\begin{equation}\label{Estimate of B-gamma-0}
\begin{aligned}
\|B_{\g}^0\|_{L^2_x}&\ \lesssim \  \v^2\|\rho^2\|_{L^2_x} \lesssim \ \v^2\|\rho\|_{L^3_x}\|\rho\|_{L^6_x} \lesssim \ \v\|\rho\|_{H^1_x}\|\n_x\rho\|_{L^2_x},\\[6pt]
\|B_{\g}^0\|_{L^3_x}&\ \lesssim \ \v^2\|\rho\|_{L^6_x}^2\lesssim \ \v\|\rho\|_{H^1_x}\|\n_x\rho\|_{L^2_x},
\end{aligned}
\end{equation}
where \eqref{estimate of bar-gamma-rho} is used.

Therefore, by using the estimates in \eqref{estimate of bar-gamma-rho} and \eqref{Estimate of B-gamma-0}, for $B_\g^1$,
\begin{equation*}
\begin{aligned}
\|B_{\g}^1\|_{L^2_x}\lesssim&\ \|B_{\g-1}^0\|_{L^\infty_x}\|\n_x h_0\|_{L^2_x}+\v\|\rho\|_{L^\infty_x}\|\n_xh_0\|_{L^2_x}\\[4pt]
\lesssim&\ \v\|\rho\|_{H^2_x}\|\n_x h_0\|_{L^2_x},\\[6pt]
\|B_{\g}^1\|_{L^3_x}\lesssim&\ \|B_{\g-1}^0\|_{L^\infty_x}\|\n_x h_0\|_{L^3_x}+\v\|\rho\|_{L^\infty_x}\|\n_x h_0\|_{L^3_x}\\[4pt]
\lesssim&\ \v\|\rho\|_{H^2_x}\|\n_x h_0\|_{H^1_x},\\
\end{aligned}
\end{equation*}
for $B_\g^2$,
\begin{equation*}
\begin{aligned}
\|B_{\g}^2\|_{L^2_x}\lesssim&\ \|B_{\g-1}^0\|_{L^\infty_x}\|\n_x^2h_0\|_{L^2_x}+\|B_{\g-2}^0\|_{L^\infty_x}\|\n_x h_0\|_{L^4_x}^2+\v\|\rho\|_{L^\infty_x}\|\n_x^2h_0\|_{L^2_x}+\v\|\rho\|_{L^6_x}\|\n_xh_0\|_{L^6_x}^2\\
&\ +\v\|\rho\|_{L^4_x}\|\n_x h_0\|_{L^4_x}+\v^2\|\n_x\rho\|_{L^4_x}\|\n_x\rho\|_{L^4_x}\\[4pt]
&\ +\v\|\n_x\rho\|_{H^1_x}\|\n_x h_0\|_{H^1_x}+\v^2\|\n_x\rho\|_{H^1_x}\|\n_x\rho\|_{H^1_x}\\[4pt]
\lesssim&\ \v\|\rho\|_{H^2_x}\|(\n_xh_0,\n_x\rho)\|_{H^1_x},\\[8pt]
\end{aligned}
\end{equation*}
\begin{equation*}
\begin{aligned}
\|B_{\g}^2\|_{L^3_x}\lesssim&\ \|B_{\g-1}^0\|_{L^\infty_x}\|\n_x^2h_0\|_{L^3_x}+\|B_{\g-2}^0\|_{L^\infty_x}\|\n_x h_0\|_{L^6_x}^2+\v\|\rho\|_{L^\infty_x}\|\n_x^2 h_0\|_{L^3_x}+\v\|\rho\|_{L^\infty_x}\|\n_x h_0\|_{L^6_x}^2\\
&\ +\v\|\rho\|_{L^6_x}\|\n_x h_0\|_{L^6_x}+\v^2\|\n_x\rho\|_{L^6_x}\|\n_x\rho\|_{L^6_x}\\[4pt]
\lesssim&\ \v\|\rho\|_{H^2_x}\|(\n_x h_0,\n_x\rho)\|_{H^2_x},\\
\end{aligned}
\end{equation*}
for $B_\g^3$,
\begin{equation*}
\begin{aligned}
\|B_{\g}^3\|_{L^2_x}\lesssim&\ \|B_{\g-1}^0\|_{L^\infty_x}\|\n_x^3 h_0\|_{L^2_x}+\|B_{\g-2}^0\|_{L^\infty_x}\|\n_x^2h_0\|_{L^4_x}\|\n_x h_0\|_{L^4_x}+\|B_{\g-3}^0\|_{L^\infty_x}\|\n_x h_0\|_{L^6_x}^3\\
& \ +\v\|\rho\|_{L^\infty_x}\|\n_x^3 h_0\|_{L^2_x}+\v\|\rho\|_{L^6_x}\|\n_x^2 h_0\|_{L^6_x}\|\n_x h_0\|_{L^6_x}+\v\|\rho\|_{L^\infty_x}\|\n_x h_0\|_{L^6_x}^3+\v\|\n_x\rho\|_{L^4_x}\|\n_x^2 h_0\|_{L^4_x}\\
& \ +\v\|\n_x^2\rho\|_{L^4_x}\|\n_x\rho\|_{L^4_x} +\v^2\|\n_x\rho\|_{L^6_x}^2\|\n_x h_0\|_{L^6_x}+\v^3\|\n_x\rho\|_{L^6_x}^3\\[4pt]
\lesssim&\ \v\|\rho\|_{H^2_x}\|(\n_xh_0,\n_x\rho)\|_{H^2_x},\\[8pt]
\|B_{\g}^3\|_{L^3_x}\lesssim&\ \|B_{\g-1}^0\|_{L^\infty_x}\|\n_x^3h_0\|_{L^3_x}+\|B_{\g-2}^0\|_{L^\infty_x}\|\n_x^2 h_0\|_{L^6_x}\|\n_x h_0\|_{L^6_x}+\|B_{\g-3}^0\|_{L^\infty_x}\|\n_x h_0\|_{L^\infty}\|\n_x h_0\|_{L^6_x}^2\\
&\ +\v\|\rho\|_{L^\infty_x}\|\n_x^3 h_0\|_{L^3_x}+\v\|\rho\|_{L^\infty_x}\|\n_x^2 h_0\|_{L^6_x}\|\n_x h_0\|_{L^6_x}+\v\|\rho\|_{L^\infty_x}\|\n_x h_0\|_{L^\infty_x}\|\n_x h_0\|_{L^6_x}^2\\
&\ +\v\|\n_x\rho\|_{L^6_x}\|\n_x^2 h_0\|_{L^6_x}+\v\|\n_x^2\rho\|_{L^6_x}\|\n_x\rho\|_{L^6_x}+\v^2\|\n_x\rho\|_{L^\infty_x}\|\n_x\rho\|_{L^6_x}\|\n_x h_0\|_{L^6_x}\\
&\ +\v^3\|\n_x\rho\|_{L^\infty_x}\|\n_x\rho\|_{L^6_x}^2\\[4pt]
\lesssim&\ \v\|\rho\|_{H^3_x}\|(\n_xh_0,\n_x\rho)\|_{H^3_x},\\
\end{aligned}
\end{equation*}
and for the last term $B_{\g}^4$,
\begin{equation*}
\begin{aligned}
\|B_{\g}^4\|_{L^2_x}\lesssim&\ \|B_{\g-1}^0\|_{L^\infty_x}\|\n_x^4 h_0\|_{L^2_x}+\|B_{\g-2}^0\|_{L^\infty_x}\|\n_x^3 h_0\|_{L^4_x}\|\n_x h_0\|_{L^4_x}+\|B_{\g-3}^0\|_{L^\infty_x}\|\n_x^2 h_0\|_{L^4_x}^2\\
&\ +\|B_{\g-4}^0\|_{L^\infty_x}\|\n_x h_0\|_{L^\infty_x}\|\n_x h_0\|_{L^6_x}^3+\v\|\rho\|_{L^\infty_x}\|\n_x^4 h_0\|_{L^2_x}
+\v\|\rho\|_{L^6_x}\|\n_x^3 h_0\|_{L^6_x}\|\n_x h_0\|_{L^6_x}\\
&\ +\v\|\rho\|_{L^6_x}\|\n_x^2 h_0\|_{L^6_x}^2+\v\|\rho\|_{L^\infty_x}\|\n_x h_0\|_{L^\infty_x}\|\n_xh_0\|_{L^6_x}^3+\v\|\n_x\rho\|_{L^\infty_x}\|\n_x^3\rho\|_{L^2_x}+\v\|\n_x^2\rho\|_{L^4_x}\|\n_x^2 h_0\|_{L^4_x}\\
&\ +\v\|\n_x^3\rho\|_{L^2_x}\|\n_x h_0\|_{L^\infty_x}+\v^2\|\n_x\rho\|_{L^6_x}^2\|\n_x^2 h_0\|_{L^6_x}+\v^2\|\n_x^2\rho\|_{L^6_x}\|\n_x h_0\|_{L^6_x}\\
&\ +\v^2\|\n_x\rho\|_{L^6_x}\|\n_x^2\rho\|_{L^6_x}\|\n_x h_0\|_{L^6_x}+\v^3\|\n_xh_0\|_{L^\infty_x}\|\n_x\rho\|_{L^6_x}^3+\v^4\|\n_x\rho\|_{L^\infty_x}\|\n_x\rho\|_{L^6_x}^3\\[4pt]
\lesssim&\ \v\|\rho\|_{H^3_x}\|(\n_x h_0,\n_x\rho)\|_{H^3_x},\\
\end{aligned}
\end{equation*}
where \eqref{Key inequality} and \eqref{A priori assumption} are utilized.

\end{proof}

\section*{Acknowledgment}
H.~Wen was partially supported by the National Natural Science Foundation of China $\#12471209$ and by Guangzhou Basic and Applied Research Projects SL2024A04J01206.

\section*{Data Availability Statement}
Data sharing not applicable to this article as no datasets were generated or analysed during the current study.

\end{document}